\newtheorem{theorem}{Theorem}[section]
\newtheorem{corollary}[theorem]{Corollary}
\newtheorem{proposition}[theorem]{Proposition}
\newtheorem{lemma}[theorem]{Lemma}
\newtheorem{theoremi}{Theorem}
\newtheorem{corollaryi}[theoremi]{Corollary}
\theoremstyle{definition}
\newtheorem{definition}[theorem]{Definition}
\newtheorem{notation}[theorem]{Notation}
\theoremstyle{remark}
\newtheorem{remark}[theorem]{Remark}
\newtheorem{fact}{Fact}[theorem]
\newtheorem{conjecture}[theorem]{Conjecture}
\newtheorem{conjecturei}[theoremi]{Conjecture}
\DeclareMathSymbol{\widetildesym}{\mathord}{largesymbols}{"65}
\newcommand{\tcoev}{\stackrel{\longrightarrow}{\operatorname{coev}}}
\newcommand{\tev}{\stackrel{\longrightarrow}{\operatorname{ev}}}
\newcommand{\ev}{\stackrel{\longleftarrow}{\operatorname{ev}}}
\newcommand{\coev}{\stackrel{\longleftarrow}{\operatorname{coev}}}
\newcommand{\e}{\operatorname{e}}
\newcommand{\sch}{\operatorname{sch}} 
\newcommand{\kk}{\Bbbk}
\newcommand{\wb}{\overline}
\newcommand{\bp}[1]{{\left(#1\right)}}
\newcommand{\catb}{\mathscr{B}} 
\newcommand{\cat}{\mathscr{C}}
\newcommand{\catc}{\mathscr{C}}
\newcommand{\catp}{\mathscr{D}^{\wp}}
\newcommand{\catn}{{\mathscr{D}}}
\newcommand{\catpb}{\bar{\mathscr{D}}^{\wp}}
\newcommand{\catpz}{\mathscr{D}_{\bar{z}}^{\wp}}
 \newcommand{\catN}{\mathscr{D}^{\aleph}}
 \newcommand{\Gp}{{\Gr^{\wp}}}
\newcommand{\brk}[1]{{{\left\langle{#1}\right\rangle}}}
\newcommand{\C}{\ensuremath{\mathbb{C}}}
\newcommand{\Z}{\ensuremath{\mathbb{Z}}}
\newcommand{\Zt}{\ensuremath{\mathsf{Z}}}
\newcommand{\Ztt}{\ensuremath{\mathsf{Z}^{\wp}}}
\newcommand{\N}{\ensuremath{\mathbb{N}}}
\newcommand{\h}{\ensuremath{\mathfrak{h}}}
\newcommand{\HR}{\ensuremath{{\mathcal H}}}
\newcommand{\Rm}{\ensuremath{{\mathcal R}}}
\newcommand{\g}{\ensuremath{\mathfrak{g}}}
\newcommand{\UH}{\ensuremath{U_ξ^H(\g)}}
\newcommand{\borel}{\ensuremath{\mathfrak{b}}}
\newcommand{\nil}{\ensuremath{\mathfrak{n}}}
\newcommand{\sll}{\ensuremath{\mathfrak{sl}}}
\newcommand{\gl}{\ensuremath{\mathfrak{gl}}}
\newcommand{\slmn}{\sll(m|n)}
\newcommand{\End}{\operatorname{End}}
\newcommand{\Hom}{\operatorname{Hom}}
\newcommand{\unit}{\ensuremath{\mathbb{I}}}
\newcommand{\Id}{\operatorname{Id}}
\newcommand{\qn}[1]{{\left\{#1\right\}}}
\newcommand{\qN}[1]{{\left[#1\right]}}
\newcommand{\qd}{\operatorname{\mathsf{d}}}
\newcommand{\K}{\ensuremath{\mathbb{K}}}
\newcommand{\ptr}{\operatorname{ptr}}
\newcommand{\str}{\operatorname{str}}
\newcommand{\Proj}{{\operatorname{Proj}}}
\newcommand{\calB}{\mathcal B}
\renewcommand{\leq}{\leqslant}
\newcommand{\ve}{\varepsilon}
\newcommand{\vp}{\varphi}
\newcommand{\Ux}{{U_ξ}}
\newcommand{\frakg}{\mathfrak g}
\newcommand{\TT}{{\ddot\C}}
\newcommand{\catd}{\mathscr{D}}
\newcommand{\Gr}{{\mathsf G}}
\newcommand{\XX}{\ensuremath{\mathsf{X}}}
\newcommand{\Xp}{\ensuremath{\XX^\wp}}
\newcommand{\Tp}{\ensuremath{\Theta^\aleph}}
\newcommand{\mt}{\operatorname{\mathsf{t}}}
\newcommand{\Negl}{\operatorname{Negl}}
\newcommand{\ideal}{\mathcal{I}} 
\newcommand{\idealp}{\ideal^\wp} 
\newcommand{\idealpb}{\bar\ideal^\wp} 
\newcommand{\qdp}{\qd^\wp} 
\newcommand{\mtp}{\mt^\wp} 
\newcommand{\p}[1]{\ensuremath{\bar {#1}}}
\newcommand{\roots}{\Delta}
\newcommand{\rk}{r}
\newcommand{\w}[2]{\lambda_#1^#2}
\newcommand{\epsh}[2]
         {\begin{array}{c} \hspace{-1.3mm}
        \raisebox{-4pt}{\epsfig{figure=#1,height=#2}}
        \hspace{-1.9mm}\end{array}}
\newcounter{exo} \newcounter{numexercice}
\renewcommand{\theexo}{\arabic{exo}}
\newcommand{\ms}[1]{\small \ensuremath{#1}}
\numberwithin{equation}{section}
\begin{document}

\raggedbottom

\title[Relative (pre)-modular categories from $sl(m|n)$]{Relative (pre)-modular categories from special linear Lie superalgebras.}
\author[CA Anghel]{Cristina Ana-Maria Anghel}
\address{Mathematical Institute, University of Oxford, Oxford, United Kingdom} \email{palmeranghel@maths.ox.ac.uk} 
\author[N Geer]{Nathan Geer}
\address{Utah State University, Department of Mathematics and Statistics, Logan UT 84341, USA}
\email{nathan.geer@usu.edu}
\author[B Patureau]{Bertrand Patureau-Mirand}
\address{Universit\'e Bretagne Sud, Laboratoire de Math\'ematiques de Bretagne Atlantique, UMR CNRS 6205, Campus de Tohannic, BP 573     
F-56017 Vannes, France}
\email{bertrand.patureau@univ-ubs.fr}
 
\maketitle
\setcounter{tocdepth}{3}

\date{\today}
\begin{abstract} We examine two different m-traces in the category of
  representations over the quantum Lie superalgebra associated to
  $\mathfrak{sl}(m|n)$ at root of unity.  The first m-trace is on the
  ideal of projective modules and leads to new Extended Topological
  Quantum Field Theories.  The second m-trace is on the ideal of
  perturbative typical modules.  We consider the quotient with respect
  to negligible morphisms coming from this m-trace and show that in
  the case of $\mathfrak{sl}(2|1)$ this quotient leads to 3-manifolds
  invariants.  We conjecture that the quotient category of
  perturbatives over quantum $\mathfrak{sl}(m|n)$ leads to 3-manifold
  invariants and more generally ETQFTs.
\end{abstract}
\date{\today}
\tableofcontents

\section{Introduction}
  Quantum traces and the corresponding concept of  quantum dimensions are a key tool in applications to low-dimensional topology, representation theory, and other fields.  When the category is not semi-simple, the quantum traces vanish, and these constructions become trivial.  The concept  of \emph{modified traces} (or \emph{m-traces}, for short) are non-trivial replacements for trace functions on non-semi-simple ribbon and, more generally, pivotal categories (e.g. see \cite{GKP1,GPV,GKP3}).  The study and underpinnings of m-traces leads to new, interesting quantum invariants, for example:  1) the renormalized link invariants of \cite{GPT}, 2)  
 the non-semisimple version of the Witten-Reshetikhin-Turaev 3-manifold invariants of \cite{CGP14} associated to  \emph{non-degenerate relative pre-modular categories} and 3) the Extended Topological Quantum Field Theories (ETQFTs) of De Renzi in \cite{D17} associated to  \emph{relative modular categories}.  For the definitions of (non-degenerate) relative (pre-)modular categories, see Subsection \ref{SS:RelModDef}.

 Loosely speaking, the definition of an m-trace is as follows.  Let
 $\cat$ be a pivotal category and $\ideal$ be an ideal (a full
 subcategory of $\cat$ which is closed under %tensor product and
 retracts
 and absorbing for tensor product%
 ).  Let $\kk=\End_\cat(\unit)$ be the ground ring of $\cat$
 where $\unit$ is the unit in $\cat$.  A m-trace is a family of
 $\kk$-linear functions,
 $\{\mt_V:\End_\cat( V)\rightarrow \kk \}_{V \in \ideal},$ where $V$
 runs over all objects of $\ideal$, and such that certain partial
 trace and cyclicity properties hold.

When $q=\e^h$ is a formal parameter, the category $U_q\slmn$-mod of
finite dimensional modules over the quantum Lie superalgebra
$U_q\slmn$ is not
semi-simple and the quantum trace of a
generic module vanishes.  This vanishing makes the associated standard
Reshetikhin-Turaev link invariant zero.  The non-semi-simplicity also
leads to a hierarchy of ideals and associated m-traces (see for
example the generalized Kac-Wakimoto conjecture proven in \cite{Se}).
In particular, there is an m-trace in $U_q\slmn$-mod on the smallest
ideal of typical modules that was used in \cite{GP1,GP2} to construct
re-normalized Reshetikhin-Turaev link invariants.  These invariants recover the
celebrated sequence of Kashaev's invariants \cite{Kv} as well as
Links-Gould invariants \cite{LG} and they are conjectured to contain the
non-semisimple sequence of ADO polynomials \cite{ADO,GP3}. 

In the standard Reshetikhin-Turaev theory \cite{RT2} to construct a
3-manifold invariant from a quantum link invariant one must set the
quantum parameter to a root of unity.  Here we follow this idea.  The
complexity of the ideals and m-traces in $U_{ξ}\slmn$-mod increases
when $q={ξ}$ is specialized to a root of unity.  Also, the category
$U_{ξ}\slmn$-mod is no longer braided. 

 Our first main result is to
slightly enlarge the algebra $U_{ξ}\slmn$ to the unrolled quantum
super group $U^{H}_{ξ}\slmn$ and prove that the category $\catd$ of
weight modules over $U^{H}_{ξ}\slmn$ is 
ribbon,
see corollaries \ref{C:BraidedCat} and \ref{C:D-ribbon}.
The category $\catd$ 
contains specializations of $U_q\slmn$-modules, which we call 
\emph{perturbative modules}.  Also, after specializing, new projective modules appear with arbitrary complex
weights.  In this paper we consider the following two ideals and corresponding m-traces:
\begin{enumerate}
\item the ideal $\ideal$ of projective modules, 
\item the ideal $\idealp$ of perturbative modules given in Theorem \ref{C:22}.
\end{enumerate}
Let us discuss these two situations in more detail.
The ideal $\ideal$ has a structure very similar to
the case of the nilpotent projective modules over a quantum simple Lie algebra at a root of unity (see also \cite{DGP2,GHP}).  In particular, in Section \ref{S:ProjMod}, the m-trace on the ideal $\ideal$ is one of the main ingredients used to prove the following theorem (in the text below this is Theorem \ref{THEOREM1b}):    
\begin{theoremi}[The projective module case]\label{THEOREM1}
$\catn$ is a relative modular category.
\end{theoremi}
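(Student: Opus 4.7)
The plan is to verify the axioms constituting a relative modular category for $\catn$, which beyond the already established ribbon structure (Corollary \ref{C:D-ribbon}) require: (i) an abelian grading group $\Gr$ together with a $\Gr$-grading of $\catn$; (ii) a symmetric monoidal (``transparent'') subcategory realizing a free abelian subgroup $\Zt$; (iii) a small exceptional set $\XX \subset \Gr$ outside of which the graded pieces are semisimple with enough simple objects; (iv) an m-trace on an ideal containing the generic simple objects; and (v) a relative modularity (non-degeneracy) condition. I will harvest (iv) directly from the m-trace on the ideal $\ideal$ of projective modules mentioned in the text, and reduce the remaining axioms to structural statements about weight modules of $U^H_\xi \slmn$.

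First I would define the grading group from the Cartan action. Weight modules in $\catn$ decompose according to their $\mathfrak{h}^*$-weights, and the tensor product adds weights, so $\catn$ is graded by $\mathfrak{h}^*$. After quotienting by the sublattice $Y$ of weights carried by the ``small'' quantum group $U_\xi\slmn$ at the chosen root of unity $\xi$, one obtains an abelian group $\Gr = \mathfrak{h}^*/Y$ by which $\catn$ is graded: the homogeneous piece $\catn_g$ collects all weight modules whose weights project to $g$. The transparent subgroup $\Zt$ is realized by the family of one-dimensional modules indexed by characters $\sigma$ of $\h$ whose weight lies in a suitable sublattice, where the unrolled generators $H_i$ guarantee that the double braiding with a generic module is trivial; this gives a free realization $\sigma:\Zt\to\catn$.

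Second, the exceptional set $\XX$ is taken to be the image in $\Gr$ of the union of the singular hyperplanes where a typicality condition for $\slmn$-weight modules fails, together with the resonance hyperplanes imposed by $\xi$ being a root of unity. Outside $\XX$, every simple weight module of the corresponding weight is both typical (so acyclic with respect to the odd root vectors) and projective, hence by standard super highest-weight arguments $\catn_g$ is semisimple with finitely many isomorphism classes of simple objects up to the $\Zt$-action. This is the content that mirrors the generic typicality story used in \cite{GP1,GP2}, now transplanted to the root-of-unity unrolled setting where the projectivity of typical weight modules follows from Theorem \ref{C:22} type arguments. The m-trace $\mt$ needed for (iv) is precisely the trace on $\ideal$ used throughout Section \ref{S:ProjMod}; compatibility with the grading is automatic since the trace vanishes on non-endomorphisms between different graded components.

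The main obstacle will be verifying the non-degeneracy (relative modularity) axiom: for every generic $g\in\Gr$, the $|\Zt|\times|\Zt|$-matrix built from Hopf-link values between the simple objects of $\catn_g$ and the transparent family $\{\sigma(z)\}_{z\in\Zt}$, renormalized by the m-trace, must be invertible. I would approach this by reducing the Hopf-link entries to explicit exponential expressions in the highest weights and the characters $\sigma(z)$, using the factorization of the $R$-matrix of $U^H_\xi\slmn$ into its Cartan and nilpotent parts and the fact that transparent modules are one-dimensional. After this reduction the matrix becomes a product of a diagonal factor (coming from the m-trace modified dimensions of the generic simples) with an exponential Vandermonde-like matrix in the pairing $\Zt\times\Zt\to\kk^\times$ induced by the Cartan part of $R$; its invertibility follows from the free realization of $\Zt$. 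Collecting these pieces, together with the ribbon structure already in hand, yields the relative modular category structure on $\catn$ and proves Theorem \ref{THEOREM1b}.
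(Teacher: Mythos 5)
Your skeleton for most of the axioms --- the grading by $\h^*$ modulo a lattice, the free realisation by one-dimensional modules, a critical set $\XX$ coming from the locus where typicality/maximal dimension fails, generic semisimplicity of the graded pieces, and the m-trace on the ideal of projectives --- is broadly the route the paper takes (its $\Gr=\h^*/\Lambda_R$, $\Zt=\Z/2\times\Lambda_\Zt^0$, and $\XX$ defined via simple highest weight modules of dimension $<D$). Two caveats on that part: the existence of the m-trace is itself a theorem requiring a proof that $\catn$ is unimodular (the paper does this by locating the injective envelope of $\unit$ inside $V\otimes V^*$ and showing it is self-dual), and generic semisimplicity is not a ``standard highest-weight argument'' but a dimension count for a finite-dimensional quotient algebra $U_\xi^{[\lambda]}(\g)$ combined with the density lemma; you should not wave these away.

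The genuine gap is in step (v). You conflate the non-degeneracy condition ($\Delta_+\Delta_-\neq0$) with the relative modularity axiom, and the condition you propose to verify --- invertibility of a Hopf-link matrix between the simples of $\catn_g$ and the family $\sigma(\Zt)$ --- is not the axiom of Definition \ref{D:2} (note also that $\Zt\simeq\Z/2\times\Z^{\rk}$ is infinite, so a ``$|\Zt|\times|\Zt|$ Vandermonde matrix'' is not meaningful here). The actual axiom asserts that encircling two parallel strands colored by simple objects $V_i,V_j$ with a Kirby color produces $\zeta_\Omega\,\delta_{ij}$ times a specific morphism. The paper's proof of this hinges on an idea absent from your plan: a \emph{factorisation property for transparent endomorphisms} (Lemma \ref{L:4}), showing that any endomorphism of $V\in\catn_0$ commuting with all double braidings must have image a direct sum of the invertible objects $\sigma(z)$, hence factors through $\sigma(\Zt)$. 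Proving this requires braiding against the dimension-$D$ simple projective module $V^{(\ell-1)\rho_0}$ (whose existence is itself established via the open Hopf link and the m-trace) to force $E_\alpha$ and $F_\alpha$ to kill the image of $f$, and then the commutator relation $[E_i,F_i]=(K_i-K_i^{-1})/(\xi^{d_i}-\xi^{-d_i})$ to pin down the weights of $\operatorname{Im}(f)$ in $\Lambda_\Zt$. Only after this factorisation does one compute $\zeta_\Omega=\sum_\mu \qd(V^\mu)\qd(V^\lambda)S'(V^\mu,V^\lambda)S'((V^\lambda)^*,V^\mu)\neq0$ using the explicit $S'$-values. Without the factorisation lemma your proposed $S$-matrix computation does not establish relative modularity, so this step of the plan would fail as written.
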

Using the technology of \cite{CGP14},
this produces 3-manifolds invariants and graded ETQFTs as in
\cite{BCGP,D17}:
\begin{corollaryi}[ETQFTs]
There is an Extended Topological Quantum Field Theory associated to $\catn$.
\end{corollaryi}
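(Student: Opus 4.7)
The plan is to invoke directly the general machinery developed by De Renzi in \cite{D17}, which produces an Extended Topological Quantum Field Theory from any relative modular category. Theorem \ref{THEOREM1} establishes that $\catn$, equipped with the m-trace on the ideal $\ideal$ of projective modules constructed in Section \ref{S:ProjMod}, is a relative modular category, so the corollary follows by specialization of \cite{D17} to this input. At the level of strategy, there is essentially nothing to prove beyond matching the data produced in the body of the paper to the axiomatic input required by De Renzi's construction.

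In more detail, I would first extract from the proof of Theorem \ref{THEOREM1} the full data witnessing relative modularity: the grading abelian group together with its free realization by generic simple objects, the ribbon structure on $\catn$ provided by Corollaries \ref{C:BraidedCat} and \ref{C:D-ribbon}, the m-trace on $\ideal$, and the non-degeneracy and modularity conditions. Second, I would check that these satisfy the hypotheses of the main construction of \cite{D17}. Third, I would feed this data into that construction: the renormalized surgery invariant of \cite{CGP14} built from the m-trace and the Kirby color of $\ideal$ is promoted to an ETQFT by assigning state spaces to admissible decorated surfaces and linear maps to admissible decorated cobordisms, with the universal naturality and gluing axioms being guaranteed by the general theorems of \cite{D17}.

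The main obstacle is bookkeeping rather than mathematical depth: one must verify that the precise form of the relative modular structure on $\catn$ obtained in Section \ref{S:ProjMod} matches De Renzi's axiomatic setup, including the admissibility conditions on surfaces and cobordisms, the grading compatibility, and the normalization of the Kirby color. Once this matching is spelled out, functoriality, monoidality, and the $2$-categorical coherence constituting an ETQFT all follow from the results of \cite{D17}, with no further input specific to the quantum superalgebra $U^{H}_{ξ}\slmn$ required.
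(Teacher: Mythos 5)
Your proposal matches the paper exactly: the corollary is obtained by applying De Renzi's construction from \cite{D17} to the relative modular category $\catn$ established in Theorem \ref{THEOREM1b}, with no additional argument specific to $U^{H}_{ξ}\slmn$. The extra bookkeeping you describe (matching the grading, free realization, m-trace, and modularity data to the axiomatic input of \cite{D17}) is precisely what the paper implicitly relies on, so there is nothing missing.
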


Next let us describe the case of the ideal of perturbative modules.   Perturbative modules can be seen has as
deformations of modules over the classical Lie superalgebra.  
Loosely speaking, the standard way to obtain a modular category from a quantum group is: 1) specialize $q$ to a root of unity; this forces some modules to have zero quantum dimension, 2) quotient the space of morphisms by \emph{negligible morphisms} of modules with zero quantum trace, 3) show that the resulting category is finite and semi-simple.  Here we generalize this technique to the context of Lie superalgebras: take a quotient by negligible morphisms corresponding to the \emph{m-trace}.  
 In particular, let  $\catp$ be the category of {\em perturbative modules} over $U_{ξ}^{H}(\slmn)$, see Definition \ref{D:PertMod}.  Let $\catpb$ be the full subcategory of $\catp$ generated by what we call standard modules, see Definition \ref{D:SubcatPB}.  Let $\catN$ be the quotient of $\catpb$ by the negligible morphisms corresponding to the m-trace on $\idealp$.  
In Section~\ref{S:perturbative} we prove (in text below this is Theorem \ref{THEOREM2b}):

\begin{theoremi}[Relative pre-modular category from perturbative modules]\label{THEOREM2}
The category $\catN$ is a relative pre-modular category.
\end{theoremi}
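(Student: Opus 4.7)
My plan is to assemble the structure of a relative pre-modular category on $\catN$ out of three ingredients that have already been put in place earlier in the paper: (a) the ribbon structure on $\catd$ (Corollary \ref{C:D-ribbon}), (b) the existence and properties of the m-trace on the ideal $\idealp$ (Theorem \ref{C:22}), and (c) the classical decomposition/highest-weight description of perturbative modules coming from their definition as deformations of classical $\slmn$-modules.

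First, I would check that the quotient $\catN=\catpb/\Negl$ inherits a $\kk$-linear pivotal ribbon structure. Since $\catpb$ is a full ribbon subcategory of $\catd$ that is stable under duals and tensor products with standard modules, and since negligible morphisms with respect to an m-trace form a two-sided tensor ideal that is invariant under the pivotal and braiding structure, the quotient $\catN$ is automatically ribbon. The m-trace on $\idealp\cap\catpb$ descends to an m-trace on the image ideal in $\catN$, so non-vanishing of modified dimensions is preserved where it mattered before taking the quotient.

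Next, I would exhibit the grading data. The highest weight of a standard perturbative module takes values in $\mathfrak{h}^*$, and by the very definition of perturbative modules the weights of any subquotient lie in a single coset of the root lattice $Q$ of $\slmn$ shifted by the highest weight. This gives a grading of $\catN$ by $G=\mathfrak{h}^*/Q$ (or the analogous quotient forced by the specialization at the root of unity); the transparent/symmetric subcategory corresponds to the subgroup $Z\subset G$ arising from the characters that act trivially on all standard modules (the classes of weights by which one can translate a perturbative module without changing the underlying object up to an invertible one). The set $X\subset G$ of singular values consists of those cosets where typicality fails, i.e.\ where a simple root vanishes on the highest weight, exactly as in the generic perturbative setup.

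The main obstacle, and the step that needs the most care, is generic semisimplicity: showing that for every $g\in G\setminus X$ the homogeneous component $\catN_g$ is semisimple with only finitely many isomorphism classes of simple objects modulo the action of $Z$. Here I would first use the classical fact that, at typical weights, modules over $\slmn$ (and hence their perturbative lifts) are simple and mutually non-isomorphic up to the $Z$-translation; then I would argue that after quotienting by negligibles all the extensions between distinct standard modules are killed, because any such extension carries a nilpotent endomorphism that pairs trivially against itself under the m-trace and is therefore negligible by definition. Finitely-many-simples-modulo-$Z$ then reduces to the classical finiteness of typical blocks modulo central character. Finally, to complete the relative pre-modular axioms I would verify the existence of a \emph{scan}: a finite family of objects in $\catN$, indexed by $G/Z$, whose modified dimensions are nonzero; the computation of $\mtp$ on standard typical modules provided by Theorem \ref{C:22} gives the needed non-vanishing, and the remaining compatibility axioms (non-degeneracy of the braiding on the transparent part, the periodicity group structure) follow by the same arguments as in the $\sll(2|1)$ case treated in detail elsewhere in the paper.
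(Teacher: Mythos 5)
Your outline has the right skeleton (the quotient inherits the ribbon structure, the m-trace descends, one needs a grading, a free realisation, generic semisimplicity and compatibility), but the central step --- generic semisimplicity --- is handled by an argument that does not work. You claim that after passing to $\catN$ ``all the extensions between distinct standard modules are killed, because any such extension carries a nilpotent endomorphism that pairs trivially against itself under the m-trace and is therefore negligible by definition.'' This conflates two notions: a negligible morphism is by definition one that \emph{factors through a negligible object}, not one whose m-trace pairings vanish, and quotienting by negligible morphisms does not automatically semisimplify a category. A non-split extension of two typical modules with non-zero modified dimension would survive in the quotient and would obstruct the existence of a completely reduced dominating set. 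What is actually needed --- and what the paper proves as Theorem \ref{T:1} --- is a structural statement at the level of $\catpb$ itself: every object of $\catpb_{\wb a}$ with $\wb a\notin\Xp$ is already isomorphic to $T\oplus N$ with $T$ a multiplicity-free sum of typicals in the alcove and $N$ negligible. This rests on ingredients your proposal never engages with: the multiplicity-free decomposition of $V^{\lambda^0_a}_\xi\otimes V^{\lambda^0_b}_\xi$ obtained from an integral form and a dimension count (Lemma \ref{L:21}), Pieri's rule showing that tensoring with the standard module $\mathsf{v}$ stays in the closed alcove $C^\le$, Andersen's theorem guaranteeing that simple modules with highest weight in $C^\le$ specialise without degeneration, and the computation showing that typicals on the boundary $C^\le\setminus C^<$ have vanishing modified dimension and are therefore negligible (this last point is where the quotient genuinely intervenes). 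Without the alcove analysis you also lose finiteness of the dominating set: ``classical finiteness of typical blocks modulo central character'' is false here in the form you need it, since the $c$-components of typical weights range over all of $\N^{\rk-1}$; finiteness comes precisely from the bound $\sum(c_k+1)\le\ell$ defining $C^\le$ together with translation by the free realisation.

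Two smaller corrections. The grading group for the perturbative category is not $\h^*/\Lambda_R$ but $\Gp=\C/\Z$, detected by the action of $K_m^\ell$ (all other weight components of a perturbative module are integral), and the free realisation is by the explicit one-dimensional modules $\sigma(\bar z,\lambda)$ with $\lambda\in\Z\, d\ell\, w_m$, for which one must check triviality of the twist; neither is identified in your proposal. Finally, the compatibility axiom is not merely ``the same as the $\sll(2|1)$ case'': it requires exhibiting the bilinear form $\psi(\wb a,k)=\xi^{\ell\wb a k\frac{2dmn}{n-m}}$ and checking it is well defined on $\Gp\times\Ztt$, which uses $\brk{w_m,w_m}=\frac{mn}{n-m}$ and the integrality of $\frac{2dmn}{n-m}$.
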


The kernel of the m-trace on $\idealp$ contains the ideal of
projective modules $\ideal$.  So the two construction of this paper
are really orthogonal.  We have the following two conjectures (the text below these are 
Conjectures \ref{conj:non-degenerateb} and \ref{conj:modularb}).
\begin{conjecturei}\label{conj:non-degenerate}
  The relative pre-modular category $\catN$ is non-degenerate.
\end{conjecturei}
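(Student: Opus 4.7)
The plan is to reduce Conjecture \ref{conj:non-degenerate} to the non-vanishing of a determinant built from the Hopf-link pairing on standard perturbative modules, and then to an explicit super-character identity. Following \cite{CGP14}, non-degeneracy of a relative pre-modular category requires that, for each generic degree $g$ in the grading group, the scalars $S_{V,W}\in\kk$ obtained by applying the m-trace $\mtp$ to the Hopf link colored by a pair of simple objects $(V,W)$ of degrees $(g,-g)$ form a non-degenerate pairing modulo isomorphism.

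First, I would pin down the simple objects of $\catN$ at each generic degree. By the construction of $\catpb$ (Definition \ref{D:SubcatPB}) and of the quotient by $\mtp$-negligible morphisms, the simples should be precisely the classes of standard perturbative modules, which are the root-of-unity lifts of typical simple $\slmn$-modules. A preliminary verification is that standard modules of distinct highest weights remain non-isomorphic in $\catN$; this should follow from the existence of colored link or graph invariants separating them.

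Second, I would compute $S_{V(\lambda),V(\mu)}$ explicitly. Because standard modules are $q$-deformations of classical typical modules, the braiding acts on the highest-weight line by a scalar of the form $\xi^{(\lambda,\mu+2\rho)}$, and applying $\mtp$ produces, up to a super denominator factor, a product of $\qdp\bigl(V(\lambda)\bigr)$ with the super character $\chi_{V(\mu)}$ evaluated at a twist of $\lambda$. Non-degeneracy then reduces to the invertibility of the character matrix
\begin{equation*}
\bigl(\chi_{V(\mu)}(\xi^{2\lambda+2\rho})\bigr)_{\lambda,\mu}
\end{equation*}
as $\lambda,\mu$ range over a transversal to the grading inside a fixed generic degree.

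Third, I expect this invertibility to follow from Kac's denominator identity for $\slmn$ together with the genericity hypothesis, which ensures that no isotropic odd-root factor vanishes. For typical weights the super character has a product form, so after dividing by a diagonal Weyl-denominator-type factor the matrix reduces to an exponential Vandermonde, which is non-singular by the standard linear independence of characters. The main obstacle is the super nature of $\slmn$: the odd roots make the super character less rigid than in the ordinary case, and they create the possibility that two inequivalent standard modules become identified in $\catN$ by an unexpected negligible morphism, which would collapse the pairing. Ruling this out amounts to producing, for each pair of inequivalent weights, a closed graph invariant separating them in the quotient; a plausible route is a reduction to the renormalized link invariants of \cite{GP1,GP2}. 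In my view this separation statement, rather than the character computation, will be the core difficulty.
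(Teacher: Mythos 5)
Your plan targets the wrong condition. In this paper (Definition \ref{d:ndeg}, following \cite{CGP14}), non-degeneracy of a relative pre-modular category means $\Delta_+\Delta_-\neq 0$, where $\Delta_\pm$ are the scalars produced by a $\pm 1$-framed unknot colored with the Kirby color $\Omega_g$ encircling a strand colored by some $V\in\catN_g$; it is \emph{not} the statement that the Hopf-link pairing $(V,W)\mapsto S'(V,W)$ is non-degenerate on simples. What you describe is much closer to the relative modularity condition of Definition \ref{D:2} (Conjecture \ref{conj:modularb}), which is strictly stronger: the paper notes that modularity implies non-degeneracy by \cite{D17}, but you neither invoke that implication nor prove modularity. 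Concretely, $\Delta_+$ is one specific linear combination $\sum_i \qdp(V_i)\,\theta_{V_i}\theta_V \operatorname{Hopf}(V_i,V^*)/\qdp(V)$ of Hopf-link values, and invertibility of the character matrix you propose does not by itself force this particular combination to be non-zero; that step would need a separate argument.

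Beyond the mismatch of definitions, the proposal is a program rather than a proof: the two steps you yourself flag as the crux (invertibility of the character matrix over the alcove-truncated index set $\Tp(g)$, and the separation of inequivalent standard modules in the quotient $\catN$) are left open. For comparison, the paper does not prove the conjecture in general either; it establishes only the $\sll(2|1)$ case (Theorem \ref{T:non-degenerateSL21b}) by a direct computation: after using the sliding property to show $\Delta_\pm$ are independent of all choices, it substitutes the explicit formulas from \cite{GP1} for the Hopf link, twist and modified dimension, lets the generic parameter $a$ tend to infinity, and evaluates the resulting Gauss-type sums over $\ell$-th roots of unity to obtain $\Delta_\pm=2\ell\bigl(\xi^{(\ell\pm1)/2}-\xi^{\pm1}\bigr)/\qn{1}_\xi^2\neq 0$. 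If you want to make progress, either carry out an analogue of this explicit computation in higher rank, or prove relative modularity and cite \cite{D17}; as written, your argument does not establish the conjecture even for $\sll(2|1)$.
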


\begin{conjecturei}\label{conj:modular}
  The category $\catN$ is a relative modular $\C^*$-category.
\end{conjecturei}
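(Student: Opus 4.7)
The plan is to assume Conjecture~\ref{conj:non-degenerate} holds, so that $\catN$ is already a non-degenerate relative pre-modular category, and then establish the two remaining ingredients: (a) the modularity upgrade in the sense of \cite{D17}, i.e.\ the existence of a generic Kirby color whose associated Hopf-link pairing is invertible, and (b) a compatible $\C^*$-structure. Each of these must be verified compatibly with the grading, periodicity group, and set of generic elements already used to prove Theorem~\ref{THEOREM2}.

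First I would construct the $*$-structure. Transporting the conjugate-linear anti-involution coming from the compact real form of $\slmn$ to the unrolled quantum super algebra $U_\xi^H(\slmn)$ yields a Hopf $*$-structure in which the Cartan and unrolled generators are self-adjoint and the odd generators are exchanged with their duals up to signs. On a weight module this induces a canonical Hermitian form, hence a conjugate-linear pivotal dagger functor $*\colon\catd\to\catd$. The key compatibility check is that $*$ preserves $\idealp$ and $\catpb$ and sends morphisms negligible for $\mt^\wp$ to morphisms negligible for $\mt^\wp$; once this is in place, $*$ descends to an involutive dagger structure on $\catN$ intertwining the braiding, twist, and pivotal structure.

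Next comes positivity. Setting $\langle f,g\rangle_V:=\mt^\wp_V(g^*\circ f)$ for $V$ a standard perturbative module, I would show this is a positive Hermitian form on $\End_{\catpb}(V)$ whose radical coincides precisely with the negligible endomorphisms of $V$, so that the induced form on $\End_{\catN}(V)$ is positive definite. Using the factorization of $\mt^\wp$ obtained in Section~\ref{S:perturbative}, the value $\mt^\wp_V(g^*\circ f)$ rewrites as an explicit positive normalization factor times a typical super-character of $V$ evaluated at purely imaginary Cartan parameters. This makes positive semidefiniteness manifest on a basis of standard modules and, via the decomposition of arbitrary perturbatives into standards modulo negligibles, on all of $\catpb$.

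Finally, the modularity upgrade. For each chamber of generic weights one must produce a Kirby color whose Hopf-link pairing on the typical simple summands is non-degenerate. Following \cite{CGP14,D17}, this reduces to the invertibility of a matrix of renormalized open Hopf-link invariants, which via the m-trace is a matrix of super-characters of standard modules evaluated at certain group-like elements parameterized by the chamber. This step is the main obstacle: for $\slmn$ the relevant super-characters are Kac-type expressions whose linear independence on a generic chamber must be extracted from a Vandermonde-type computation in the continuous typical weights, coupled with a careful handling of the odd root contributions. Once this non-degeneracy is combined with the $*$-structure, the positivity of $\langle\,\cdot\,,\,\cdot\,\rangle$, Conjecture~\ref{conj:non-degenerate}, and Theorem~\ref{THEOREM2}, the category $\catN$ acquires the full structure of a relative modular $\C^*$-category, establishing the conjecture.
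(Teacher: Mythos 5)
This statement is an open conjecture in the paper: the authors do not prove it, they only establish the weaker non-degeneracy statement (Conjecture~\ref{conj:non-degenerate}) in the single case $\g=\sll(2|1)$, via an explicit computation of $\Delta_\pm$. So there is no proof in the paper to compare yours against, and your proposal should be judged as an attempted resolution of an open problem. As such, it is a research outline rather than a proof, and every one of its three pillars has a genuine gap. First, the $*$-structure: it is not established that the compact real form of $\slmn$ induces a Hopf $*$-structure on $U^H_\xi(\slmn)$ at a root of unity that is compatible with the truncated $R$-matrix $\Rm^\xi$, the pivot $K_\pi$, and the twist; for super algebras the ``compact'' form is already problematic (the odd part carries a symplectic rather than orthogonal pairing), and you give no construction. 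Second, positivity: the claim that $\mt^\wp_V(g^*\circ f)$ is positive semidefinite with radical exactly the negligible morphisms is asserted, not shown. The explicit formula \eqref{eq:qdp} gives $\qdp(V^{\lambda^c_a}_\xi)$ as a ratio of products of quantities $\qn{x}_\xi=\xi^x-\xi^{-x}$, which for real arguments are purely imaginary; these modified dimensions are not positive reals in general, so positivity of the form could only hold after a renormalization you have not specified, and it may simply fail.

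Third, and most importantly, you yourself flag the modularity step as ``the main obstacle'' and leave it open: the relative modularity condition \eqref{eq:mod} is not merely invertibility of a Hopf-link matrix, but a transparency/factorization statement. In the projective case the paper proves it through Lemma~\ref{L:4} (factorization of transparent endomorphisms through the $\sigma(z)$), which relies on the existence of the large simple projective module $V^{(\ell-1)\rho_0}$; no analogue of that argument is available in $\catN$, where the relevant simple objects are the small typical modules in the alcove $C^<$. Reducing \eqref{eq:mod} to linear independence of Kac-type super-characters is plausible but is precisely the unresolved content of the conjecture. Finally, note that you assume Conjecture~\ref{conj:non-degenerate} as input, while the paper points out that modularity \emph{implies} non-degeneracy; so your argument, even if completed, would prove a conditional statement whose hypothesis is itself only known for $\sll(2|1)$. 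In short: the proposal identifies the right ingredients but does not close any of the gaps that make this a conjecture.
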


As mentioned above, Conjecture \ref{conj:modular} implies Conjecture
\ref{conj:non-degenerate}.

\begin{theoremi}\label{T:non-degenerateSL21}
  Conjecture \ref{conj:non-degenerate} is true for $\g=\sll(2|1)$. 
\end{theoremi}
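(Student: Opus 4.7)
The plan is to verify the non-degeneracy condition of \cite{CGP14} for $\catN$ directly when $\g=\sll(2|1)$. For a relative pre-modular category, non-degeneracy amounts to the statement that a Kirby-type stabilization scalar, built as a sum of modified dimensions weighted by Hopf link coefficients over the simples lying above a generic component of the grading group, is non-zero. Since $\sll(2|1)$ has rank two, the relevant sums are small enough to be analyzed essentially by hand.

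First I would fix a generic element $\bar g$ of the grading group of $\catN$ and enumerate the simple objects of $\catN$ lying above $\bar g$. By construction of $\catpb$ and $\catN$ these are the images of standard perturbative typical modules $\{V_\mu\}$, with $\mu$ ranging over a translate of the periods lattice inside the weight space; their modified dimensions with respect to $\mtp$ are non-zero by the very definition of the ideal $\idealp$ from Theorem \ref{C:22}. In the $\sll(2|1)$ case this index set is an explicit two-dimensional discrete family.

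Second, I would compute two scalar families explicitly: the modified dimensions $\mtp(\id_{V_\mu})$ and the Hopf link coefficients $S'(V_\mu,W)$ for a chosen auxiliary standard module $W$ in the $\bar g$-component. Both reduce to explicit products over the positive roots of $\sll(2|1)$ via the action of the universal $R$-matrix of $U_\xi^H\sll(2|1)$ on tensor products of typical modules, following the scheme of \cite{GP1,GP2,GKP1}. The odd simple root contributes signs together with factors of the form $(1-\xi^{2\langle\alpha_{\text{odd}},\mu\rangle})$ that are absent in the purely bosonic setting, and these factors are the source of all the trouble.

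The main obstacle is the final step: showing that the stabilization sum $\sum_\mu \mtp(\id_{V_\mu})\,S'(V_\mu,W)$ is a non-zero function of $W$. In the purely bosonic $\sll(2)$ analogue this is a classical Gauss sum, hence manifestly non-zero, but the odd root contribution could a priori cancel the bosonic one. My approach is to perform the sum as an iterated sum along the two directions of the periods lattice: along the bosonic direction it collapses to a twisted Gauss sum of order $\ell$ whose non-vanishing is standard, and the remaining outer sum is a finite linear combination of monomials with pairwise distinct exponents in a formal variable built from $W$, which therefore cannot vanish identically. Non-vanishing on a Zariski-dense locus of $W$ is sufficient to establish Conjecture \ref{conj:non-degenerate} for $\sll(2|1)$.
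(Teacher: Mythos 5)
Your overall strategy coincides with the paper's: for $\sll(2|1)$ one writes out the Kirby color over a generic degree explicitly and evaluates the stabilization scalars $\Delta_\pm$ term by term, using the modified dimensions, twists and Hopf link values already computed in \cite{GP1}. The gap lies entirely in your final non-vanishing step, and it is a genuine one. A first, smaller issue: $\Delta_\pm$ in Definition \ref{d:ndeg} come from a $\pm1$-framed meridian, so every summand carries the twist factors $\theta_{V_\mu}\theta_W$; the sum you actually write down, $\sum_\mu \qdp(V_\mu)\,S'(V_\mu,W)$, is the $0$-framed pairing and is not the quantity whose non-vanishing is required. Assuming you meant to include the twists (your appeal to Gauss sums suggests you did), the substantive problem is the claimed mechanism of non-vanishing. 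For $\sll(2|1)$ the summand is, up to a global unit,
\begin{equation*}
  \xi^{-2k^2+1+m(1-2k)}\;\frac{\qn{m+1}_\xi^2\,\qn{a}_\xi\,\qn{a+1}_\xi}{\qn{1}_\xi^2\,\qn{a+k}_\xi\,\qn{a+k+m+1}_\xi},
\end{equation*}
with $(k,m)$ ranging over $\{0,\dots,\ell-1\}^2$ and $a$ encoding $W$. Neither iterated sum has the shape you describe: the bosonic direction $m$ carries only a \emph{linear} exponent $\xi^{m(1-2k)}$ (the quadratic part of the twist sits in the $k$-direction), so the inner sum over $m$ is not a Gauss sum --- in fact it vanishes for all but three values of $k$, and this vanishing is precisely the mechanism the computation must exploit. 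Conversely, the sum over $k$ is a quadratic exponential twisted by the rational factor $1/\qn{a+k}_\xi$, so ``standard non-vanishing of Gauss sums'' does not apply to it either.

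The ``pairwise distinct exponents in a variable built from $W$'' argument also cannot close the proof: by Lemma 5.10 of \cite{CGP14} (re-derived in the paper via the sliding property) $\Delta_\pm$ is independent of $W$ and of $g$, so as a function of $\xi^a$ it is constant. Concretely, in the regime $\xi^a\to\infty$ every term of the double sum contributes in degree zero in $\xi^a$, so all of your ``monomials'' have the \emph{same} exponent and linear independence of characters gives nothing; one is left having to evaluate a constant, which is exactly where cancellation could occur. The paper's actual route is to use the $W$-independence to pass to the limit $\xi^a\to\infty$, which reduces each inner sum over $m$ to a bare sum of $\ell$-th roots of unity $\sum_m\xi^{cm}$; these vanish unless $c\equiv 0\pmod\ell$, only the terms $k=0$ and $k=\frac{\ell\pm1}{2}$ survive, and their explicit total $\frac{2\ell(\xi^{(\ell+1)/2}-\xi)}{\qn{1}_\xi^2}$ is visibly non-zero. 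Some arithmetic argument of this kind is unavoidable; a purely structural ``no cancellation'' claim does not establish $\Delta_+\Delta_-\neq0$.
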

The following theorem  together
with \cite{CGP14} implies the existence of 3-dimensional invariants (in text this is Theorem \ref{T:non-degenerateSL21b}):

\begin{corollaryi}[Modified $3$-manifold invariants from perturbative modules]
  There is an renormalized R-T invariant of 3-manifold associated to
  $\catN$ in the case of $\g=\sll(2|1)$.
\end{corollaryi}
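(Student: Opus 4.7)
The plan is to obtain this corollary as an immediate consequence of Theorem \ref{THEOREM2}, Theorem \ref{T:non-degenerateSL21}, and the surgery construction of \cite{CGP14}. The main result of \cite{CGP14} associates, to any \emph{non-degenerate} relative pre-modular category, a renormalized Reshetikhin–Turaev invariant of closed 3-manifolds decorated by a cohomology class with values in the grading group together with a compatible bichrome surgery presentation whose red components are colored by generic simple objects; the m-trace is precisely what renders the surgery formula well-defined in the presence of objects with vanishing standard quantum dimension.

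First I would verify that all the axioms of a non-degenerate relative pre-modular category are in place for $\catN$ in the $\g=\sll(2|1)$ case. The relative pre-modular structure—namely the grading by a commutative group, the ribbon structure inherited from $\catd$, the generic semisimplicity of the graded pieces of standard modules, and the existence of an m-trace arising from the ideal $\idealp$ of perturbative modules—is furnished by Theorem \ref{THEOREM2}. The remaining ingredient, non-degeneracy of the modified Hopf link pairing on generic simples in the sense of Conjecture \ref{conj:non-degenerate}, is supplied in this case by Theorem \ref{T:non-degenerateSL21}. Together these two results place $\catN$ squarely within the hypotheses of the surgery construction of \cite{CGP14}.

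With the input verified, I would then simply apply the \cite{CGP14} construction to $\catN$ to produce the renormalized 3-manifold invariant. The only bookkeeping needed is to match the grading group and generic locus used in Section \ref{S:perturbative} with the decoration data required by \cite{CGP14}, which is a formal translation of conventions. The genuinely non-trivial step—verifying non-degeneracy of the m-trace pairing—has been resolved upstream by Theorem \ref{T:non-degenerateSL21}, so no further technical obstacle remains here: the corollary follows by a direct invocation of existing machinery once the two cited results are combined.
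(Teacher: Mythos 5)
Your proposal is correct and follows exactly the paper's own route: the corollary is obtained by combining Theorem \ref{THEOREM2} (the relative pre-modular structure on $\catN$) with Theorem \ref{T:non-degenerateSL21} (non-degeneracy, i.e.\ $\Delta_+\Delta_-\neq 0$, for $\sll(2|1)$) and then invoking the surgery construction of \cite{CGP14} for non-degenerate relative pre-modular categories. No further comment is needed.
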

This 3-manifold invariant is an extension of the colored Links-Gould
invariant of links (\cite{LG,GP1}) which is analogous to the original
Witten-Reshetikhin-Turaev 3-manifold invariant extending the colored
Jones polynomial \cite{Jo}.  It should be noted that the case of
$\sll(2|1)$ has already been studied: a) Theorem \ref{THEOREM1}, in
the case of $\sll(2|1)$, was proved by Ha, see \cite{Ha,Ha2}, b) in
\cite{AG17}, the category $\catN$, in the case of $\sll(2|1)$, was
shown to be a relative $G$-spherical category and with the machinery
of \cite{GP13} leads to a modified Turaev-Viro invariant.

We find the construction of this paper interesting both from an algebraic point of view and also because of its topological applications.  In particular, the notion of taking a quotient by negligible morphisms of an m-trace in the perturbative case seems to have value and should be studied further.  It is the first mathematical example the authors know of where the modules used in the construction of a pre-relative modular category are deformations of the underlying non-quantized Lie (super)algebra.  However, similar  observations were used in mathematical physics by Mikhaylov and Witten in \cite{MWi}, where they considered super-group Chern-Simons theories.  Further work in this area appeared in \cite{AGPS, Mik2015}.  As in the case of Witten's Quantum Field Theory interpretation of the Jones polynomial, it is interesting to ask if the invariants in this paper are perturbative versions of the invariants discussed in \cite{MWi}?  Even if the answer to this question is no, it would be interesting to investigate if the invariants of this paper have a physical interpretation, e.g.\ a meaning in super-group Chern-Simons theories.  Note that in \cite{MWi}, the question of the existence of invariants of higher rank than $\gl(1|1)$ was not understood and even questioned to be possible.  However, the existence and use of the m-traces makes this possible.

{\bf Acknowledgments.} N.\ Geer was partially supported by the NSF grant DMS-1452093. C.\ Anghel acknowledges the support of the European Research Council (ERC) under the European Union's Horizon 2020 research and innovation programme (grant agreement No 674978).

{}
\section{Preliminaries}
In the section we review background material that will be used in the
following sections.

A \emph{super-space} is a $\Z_{2}$-graded vector space
$V=V_{\p 0}\oplus V_{\p 1}$ over $\C$.  We denote the parity of a
homogeneous element $x\in V$ by $\p x\in \Z_{2}$.  We say $x$ is even
(odd) if $x\in V_{\p 0}$ (resp.  $x\in V_{\p 1}$).  A \emph{Lie
  superalgebra} is a super-space
  with
a super-bracket
that
preserves the $\Z_{2}$-grading, is super-antisymmetric
and satisfies the super-Jacobi
identity (see \cite{K}).  Throughout, all modules will be
$\Z_{2}$-graded modules (module structures which preserve the
$\Z_{2}$-grading, see \cite{K}).

\subsection{The super Lie algebra $\slmn$}\label{SS:SuperLieAlg}
Let $\g=\g_{\p 0}\oplus \g_{\p 1}$ be the special linear super Lie
algebra $\slmn$ with $m \neq n$.  Its rank denoted
$\rk$ is $\rk=m+n-1$. Let $\borel$ be the distinguished Borel
sub-superalgebra of $\g$.  Then $\borel$ can be written as the direct
sum of a Cartan sub-superalgebra $\h$ and a positive nilpotent
sub-superalgebra $\nil_{+}$.  Moreover, $\g$ admits a decomposition
$\g=\nil_{-}\oplus \h \oplus \nil_{+}$.

We can identify $\slmn$ with the super Lie algebra of super-trace zero
$(m|n)\times(m|n)$ matrices.
This standard representation is generated by the elementary matrices
$E_i=E_{i,i+1}$, $F_i=E_{i+1,i}$, $H_m=E_{m,m}+E_{m+1,m+1}$ and
$H_i=E_{i,i}-E_{i+1,i+1}$ for $i=1\cdots \rk$, $i\neq m$. The Cartan subalgebra $\h$
with basis $\{H_i\}$ is contained in the space of diagonal matrices
$X$.  The space $X^{*}$ has a canonical basis
$(ε_1,\ldotsε_{m+n})$ which is dual to the basis formed
by the matrices $E_{i,i}$. Set $δ_{i}=ε_{i+m}$, then $\h$
is the kernel of the super-trace $\str=\sum_{i=1}^mε_i-\sum_{i=1}^nδ_j$.
Therefore, $\h^*$ is the quotient of $X^{*}$ by the super-trace.

A Cartan matrix associated to a Lie superalgebra is a pair consisting
of a matrix $A=(a_{ij})$ and a set determining the parity of the
generators.  For $\g=\slmn$ we choose the matrix determined by $[H_i,E_j]=a_{ij}E_j$.  It is the Cartan matrix associated to
the Dynkin diagram
%$$O--O-...-O--\otimes--O-...-O$$
$$\epsh{Fig-dynkin}{2ex}\vspace*{1ex}$$
see \cite{K}.  Then this Cartan matrix is the
$\rk\times\rk$-matrix
$A=\left(a_{ij}\right)$ whose non zeros
entries are given by
$$
a_{i,i}=2, \;\; a_{i,i+1}=-1,\;\; a_{i+1,i}=-1, \;\; a_{m+1,m}=-1,
\;\; a_{m,m}=0, \;\; a_{m,m+1}=1
$$
for $i\neq m$ and the set determining the parity is $\{m\}$.  By
setting $d_i=1$ for $i=1,\ldots, m$ and $d_i=-1$ for $i>m$ then
$(d_{i}a_{ij})$ is a symmetric matrix.

Let $\brk{\cdot,\cdot}$ be the symmetric non-degenerate form on $\h$
determined by $\brk{H_{i},H_{j}}=d_{j}^{-1}a_{ij}$ which is equal to
the restriction on $\h$ of $\brk{H,H'}=\str(H.H')$. So it extends to the
whole set of diagonal matrices and induces on its dual the bilinear
form defined by $\brk{ε_i,ε_j}=δ^i_j$ for $i,j=1,\dots, m$ and
$\brk{δ_i,δ_j}=-δ^i_j$ for $i,j=1,\dots, n$.  Hence $\h^*$ can also be
identified as an euclidean space with $\str^\perp$ ($\str$ is not
isotropic since $m\neq n$).  Moreover, the form $\brk{\cdot,\cdot}$ induces a
Weyl-invariant bilinear form on $\h^{*}$, which we will also denote by
$\brk{\cdot,\cdot}$.

Let $\roots^+=\roots_{\p 0}^{+} \cup \roots_{\p 1}^{+}$ to be the set of positive roots where 
$$\roots^+_{\p 0}=\{ε_i-ε_j,\,1\leq i<j\leq m\}\cup
\{δ_i-δ_j,\,1\leq i<j\leq n\}$$
is the set of even positive roots and 
$$\roots^+_{\p 1}=\{ε_i-δ_j\}$$
is the set of odd positive roots.
Let $\roots^-=\roots_{\p 0}^{-} \cup \roots_{\p 1}^{-}$ to be the set of negative roots where 
$$\roots^-_{\p 0}=\{-\alpha \; |\; \alpha\in \roots_{\p 0}^{+}\} \text{ and } \roots^-_{\p 1}=\{-\alpha \; | \; \alpha\in \roots_{\p 1}^{+}\}.$$
A positive root is called
\emph{simple} if it cannot be decomposed into a sum of two positive
roots.  
We follow the normal ordering on $\roots^+$ given in
\cite[Section 3.2]{Yam94}: the positive roots $\ve_i-\ve_j$ are ordered as
couples $(i,j)$ with the lexicographic order.
We denote the \emph{simple roots} as $\alpha_i=\epsilon_i-\epsilon_{i+1}$ for $1\leq i \leq m+n-1=\rk$.
 The values of the form $\brk{\cdot,\cdot}$ on these simple roots are given by 
$$
\brk{\alpha_i,\alpha_j}=d_ia_{i,j}.
$$
Also, note that $\alpha_i(H_j)=a_{j,i}$.  
We denote with $\Lambda_R$ the \emph{root lattice}, which is the $\Z$-lattice generated by the simple roots $\{\alpha_i\}$.

 Let $\rho_{\p0}$ (resp. $\rho_{\p1}$) denote the half sum of
all the even (resp. odd) positive roots and set $\rho= \rho_{\p0}-\rho_{\p1}$.  Then we have
$$2\rho_{\p0}=\sum_i (m+1-2i)ε_i+\sum_j (n+1-2j)δ_j\quad
\text{,}\quad 2\rho_{\p1}=n\sum_i ε_i-m\sum_j δ_j$$
$$\text{and } \rho=\rho_{\p0}-\rho_{\p1}=\frac12\left(\sum_i
  (m-n+1-2i)ε_i+\sum_j (m+n+1-2j)δ_j\right).$$ 
and for any simple root $\alpha_i$,
\begin{equation}
  \label{eq:rho}
  \brk{2\rho,\alpha_i}=\brk{\alpha_i,\alpha_i}.
\end{equation}
Let $w_1,...,w_{\rk}$ be the fundamental dominant weights, which are the vectors of $\h^*$ determined by the condition $ \brk{w_i,\alpha_j}=d_i\delta_{i,j}$ for all $i,j=1,...,\rk$.  
Up to the super-trace, representatives of the fundamental dominant weights are given by
\begin{align*}
  w_k&=\sum_{i=1}^kε_i\text{ for }k=1\cdots m-1\\
  w_m&=\sum_i ε_i=\sum_j δ_j=\frac{-2\rho_1}{m-n}\\
  w_{m+k}&=-\sum_{j=k+1}^{n}δ_j\text{ for }k=1\cdots n-1.\\
\end{align*}
Let $\Lambda_W$ the \emph{weight lattice}, which is the $\Z$-lattice generated by the  fundamental dominant weights.

\subsection{Two super quantum groups associated to $\slmn$} 
We assume that $m\neq n$ are non zero integers, $\ell\ge \rk=m+n-1$
is an odd integer and $ξ$ is the root of unity
$ξ=\exp\bp{2i\pi/\ell}$.
As we will see later, the restriction on the parity of $\ell$
comes from Theorem \ref{T:Andersen} and the choice $\ell\ge {\rk}$ ensures
the semi-simplicity of the tensor square of a typical module (see Lemma \ref{L:21}).
Also, let $h$ be an indeterminate and set $q=\e^{h/2}$.  Next we discuss two different quantum groups associated to $\slmn$.  
  We adopt the following
notations:
$$q^z=\e^{zh/2}\quad\textrm{and}\quad\qn z_q=q^z-q^{-z}$$
for $z\in \C$.
\begin{definition}[\cite{Yam94}]\label{D:Usl}
  Let $U_h(\g)=U_h\slmn$ be the $\C[[h]]$-superalgebra generated by the
  elements $H_{i},E_{i}$ and $ F_{i}, $ $i = 1,\dots, \rk$, 
  satisfying the relations:
  \begin{align*}
    [H_{i},H_{j}] &=0, & [H_{i},E_{j}]=&a_{ij}E_{j}, &
    [H_{i},F_{j}]=&-a_{ij}F_{j},
  \end{align*}
  \begin{align*}
    [E_{i},F_{j}]=&δ_{i,j}\frac{q^{d_iH_{i}}-q^{-d_iH_{i}}}{q^{d_i}-q^{-d_i}}, &
    E_{m}^{2}=&F_{m}^{2}=0,
  \end{align*}
 \begin{align}\label{E:Qserre2}
   [E_i,E_j]=&0 \text{ if } |i-j|>2, 
 \end{align}
 \begin{align}\label{E:Qserre3}
   E^2_iE_j-(q+q^{-1})E_iE_jE_i+E_jE_i^2=
   & 0 \text{ if } |i-j|=1 \text{ and } i\neq m, 
 \end{align}
 \begin{multline}\label{E:Qserre4}
   E_{m}E_{m-1}E_{m}E_{m+1}+E_{m}E_{m+1}E_{m}E_{m-1}+E_{m-1}E_{m}E_{m+1}E_{m}
   \\+E_{m+1}E_{m}E_{m-1}E_{m}-(q+q^{-1})E_{m}E_{m-1}E_{m+1}E_{m}=0.
\end{multline}
and  the relations \eqref{E:Qserre2}-\eqref{E:Qserre4} with $E$ replaced by $F$.  All generators are even
except for $E_m$ and $F_m$ which are odd and $[\cdot,\cdot]$ is the super-commutator given by
  $[x,y]=xy-(-1)^{\p x \p y}yx$.  
\end{definition}

The super algebra
$U_h(\g)$ is a Hopf superalgebra, see Yamane \cite{Yam94}.
 The coproduct, counit and antipode are given by
  \begin{align*}
    \label{}
    \Delta {E_{i}}= & E_{i}\otimes 1+ q^{-d_iH_{i}} \otimes E_{i}, &
    ε(E_{i})= & 0 & S(E_{i})=&-q^{d_iH_{i}}E_{i}\\
    \Delta {F_{i}} = & F_{i}\otimes q^{d_iH_{i}}+ 1 \otimes F_{i}, &
    ε(F_{i})= &0 & S(F_{i})=&-F_{i} q^{-d_iH_{i}}\\
    \Delta {H_{i}} = & H_{i} \otimes 1 + 1\otimes H_{i}, & ε(H_{i}) = & 0 &
    S(H_{i})= &-H_{i}.
  \end{align*}
\begin{remark}
  All the papers \cite{KTol, Yam94, Z2} use different conventions for
  the coalgebra structure of $U_h(\g)$.  Here we use the
  conventions chosen by \cite{KTol}, our coalgebra structure can be
  recovered from \cite{Yam94} by changing $h$ to $-h$ and $q$ to
  $q^{-1}$.  Finally, we point out that there is a mistake in the Hopf
  algebra structure of $U_h(\g)$ in \cite{GP2} and they should be
  as above (in other words, one needs to replace $H_i$ with $d_iH_i$).
  This mistake did not affect the computations of \cite{GP2}.
\end{remark}
Khoroshkin, Tolstoy \cite{KTol} and Yamane \cite{Yam94} showed that
$U_h(\g)$ has an explicit $R$-Matrix ${\Rm}$.  We will now recall
this work, following Yamane.  First, let $(d_{ij})$ be the inverse of
the matrix $(a_{ij}/d_j)$.  Set
\begin{equation}
  \label{E:FormOfK}
  \HR^h=q^{\sum_{i,j}^{\rk}d_{ij}H_{i}\otimes H_{j}}.
\end{equation}
  Then the $R$-matrix is of the form
\begin{equation}
  \label{eq:Rh}
  {\Rm}^h=\check{{\Rm}}^h\HR^h.
\end{equation}
where $\check{{\Rm}}^h$ is described as follows.  

 For $\alpha \in \roots^{+}$, let
 $E_{\alpha}$ and $F_{\alpha}$  be the $q$-analogs
 of the Cartan-Weyl generators where $E_{\alpha_{i}}=E_{i}$ and
 $F_{\alpha_{i}}=F_{i}$ for any simple root $\alpha_{i}$ (see Section 5.1.1 of \cite{Yam94} for the definition of $E_\alpha$ then $F_\alpha$ comes from the dual Hopf algebra: $F_\alpha=E_\alpha^{\circ}\sigma^{p(\alpha)}$ in the notation of \cite{Yam94}).
For $\alpha \in \roots^{+}$,
 let 
$q_\alpha=(-1)^{\p \alpha}q^{-\brk{\alpha,\alpha}}$, where $\p \alpha$ is
the parity of $E_\alpha$.    Lemma 10.3.1 and Theorem 10.6.1 of \cite{Yam94} imply that there exist $c_\alpha=(-1)^iq^k$ for some $i,k\in \Z$ such that 
the element given in Equation \eqref{eq:Rh} is an
$R$-matrix where 
\begin{equation}
  \label{E:Rcheck}
  \check{{\Rm}}^h=\prod_{\alpha \in\roots^{+}}\exp_{q_\alpha}\left(c_{\alpha}(q-q^{-1})(E_{\alpha}\otimes F_{\alpha})\right),
\end{equation}
here the order in the product is given by the chosen fixed normal
ordering of $\roots^{+}$.

In the next definition we think of $q$ as a formal indeterminate.  Let $\K_\ell$ be the subring of $\C(q)$ made of fractions that have no poles at $ξ$ ($\K_\ell$ is a localization of $\C[q]$). A $\K_\ell$-module can be specialized at $q = ξ$ (the specialization is the tensor product with the $\K_\ell$-module $\C$ where $q$ acts by $ξ$).
 
 \begin{definition}\label{relqgp}
Let $U_{q}(\g)=U_{q}\slmn$ be the $\K_\ell$-superalgebra generated
  by the elements $K^{\pm}_i,E_{i}$ and $ F_{i}, $ $i = 1,\dots, \rk$, satisfying the
  relations:
 \begin{align}
 \label{E:1KK}
 K_i  K_j &=  K_j K_i , &  K_iK_i^{-1}&=K_i^{-1} K_i=1 ,
\end{align}
\begin{align}
 \label{E:1KEK}
K_iE_jK_i^{-1}&=q^{d_ia_{ij}}E_j, & K_iF_jK_i^{-1}&=q^{-d_ia_{ij}}F_j, 
 \end{align}
\begin{align}
 \label{E:1EF}
[E_{i},F_{j}]=&δ_{i,j}\frac{K_i-K_i^{-1}}{q^{d_i}-q^{-d_i}}, & E_m^2=F_m^2=0
\end{align}
together with the quantum Serre type relations:
\eqref{E:Qserre2}-\eqref{E:Qserre4} and the $F$ analogs.  All
generators are even except for $E_m$ and $F_m$ which odd.  As above,
here $E_{\alpha}$ and $F_{\alpha}$ for $\alpha \in \roots^{+}$ are the
$q$-analogs of the Cartan-Weyl generators.
\end{definition}

Then, $U_{q}(\g)$ is a Hopf superalgebra with the following coproduct, counit and
antipode:
\begin{align*}
\label{}
\Delta ({E_{i}})= & E_{i}\otimes 1+ K_i^{-1} \otimes E_{i}, & ε(E_{i})= & 0, & S(E_{i})=&-K_iE_{i},\\
\Delta ({F_{i}}) = & F_{i}\otimes K_i+ 1 \otimes F_{i}, & ε(F_{i})= &0, & S(F_{i})=&-F_{i} K_i^{-1},\\
\Delta(K_i^\pm)=&K_i^\pm\otimes K_i^\pm, &ε(K_i^\pm)=&1, & S(K_i^\pm)=&K_i^{\mp}.
 \end{align*}
The \emph{unrestricted quantum group} $U_{ξ}(\g)$ is the $\C$-superalgebra obtained from $U_{q}(\g)$ by specializing $q$ to $ξ$.

\begin{definition}[Unrolled quantum group]
Let $U_{ξ}^{H}(\g)=U_{ξ}^{H}\slmn$ be the $\C$-superalgebra generated
  by the elements $H_{i}, K^{\pm}_i,E_{i}$ and $ F_{i}, $ $i = 1,\dots, \rk$, satisfying 
 Relations \eqref{E:1KK}-\eqref{E:1EF} and the quantum Serre type Relations
\eqref{E:Qserre2}-\eqref{E:Qserre4} and the $F$ analogs where $q = ξ$, plus the relations
 \begin{align*}
 [H_i,H_j] &=[H_i,K^\pm_j]=  0 , &  [H_i, E_j] &=a_{ij}E_j , & [H_i,F_j]&=-a_{ij}F_j,
\end{align*}
 \begin{align*}
 E_{\alpha}^\ell= F_{\alpha}^\ell=&0,  \text{ for all } \alpha \in \Delta^{+}_{\p0}.
\end{align*}
 All generators are even except $E_m$ and $F_m$ which are odd.  As above,
here $E_{\alpha}$ and $F_{\alpha}$ for $\alpha \in \roots^{+}$ are the
$q$-analogs of the Cartan-Weyl generators.
\end{definition}
The algebra $U_{ξ}^{H}(\g)$ is a Hopf superalgebra with coproduct $\Delta$, counit $\epsilon$ and antipode $S$, which is defined as above on $K^{\pm}_i, E_{i}$ and $ F_{i}$ and defined on the elements $H_i$ for $i = 1, . . . , r$ by
\begin{align*}
\Delta ({H_{i}}) = & H_{i} \otimes 1 + 1\otimes H_{i}, & ε(H_{i})  = & 0, & S(H_{i})= &-H_{i}.
 \end{align*}
 \begin{definition}[Category of weight modules $\catn$]\label{D:catn}
  We consider the category $\catn$ of $U^{H}_{ξ}(\g)$
weight modules. Its objects are finite dimensional $U^{H}_{ξ}(\g)$
supermodules $V$ with the following properties:
\begin{enumerate}
\item The element $H_i$ acts diagonally on $V$ for all $i=1,...,{\rk}$.
\item The element $K_i$ acts as $ξ^{d_iH_i}$:
  $$\rho_V(K_i)=\exp\bp{\frac{2\sqrt{-1}\pi}\ell \rho_V(d_iH_i)}\in\End_\C(V), \text{ for all }  i=1,...,{\rk}.$$
\end{enumerate}
The morphisms of $\catn$ are equivariant linear maps that respect the parity.
\end{definition}
If $V$ is an object in $\catn$ then the first condition of the previous definition says that there exists a basis $\{v_j\}$ of $V$ and  $\lambda_j \in \h^*$ such that $H_iv_j=\lambda_j(H_i)v_j$ for all $i,j$.  The linear functional  $\lambda_j \in \h^*$ is called the  \emph{weight} of $v_j$.

If
$\alpha=\sum_i{n_i\alpha_i}\in\Lambda_R$ is an element of the root
lattice, then we define
$K_\alpha=\prod_iK_i^{n_i}\in
U^{H}_{ξ}(\g)$.
Then $K_i=K_{\alpha_i}$ and for a weight vector $v$ of weight $\mu\in\h^*$ in an object of $\catn$, one has
\begin{equation}
  \label{eq:Ki}
  K_\alpha\cdot v=\xi^{\brk{\alpha,\mu}}v.
\end{equation}

\begin{definition}[Perturbative modules]\label{D:perturb}
A weight $\lambda\in \h^*$ is \emph{perturbative} if $\lambda(H_i)\in \Z$ for all $i\neq m$.
An object of $\catn$  is \emph{perturbative}, if all its weights are perturbative.
\end{definition}

Every simple finite dimensional weight $U^{H}_{ξ}(\g)$-module has a unique 
highest weight $\lambda\in\h^*$.
At a root of unity, any weight $\lambda\in\h^*\simeq \C^r$ is the
highest weight of a simple weight $U^{H}_{ξ}(\g)$-module.  This is in contrast
with the h-adic setting where a necessary condition for
$\lambda\in\h^*$ to be the highest weight of a finite dimensional module
is that $\lambda$ is perturbative.  

\begin{definition}[Category of perturbative modules $\catp$]\label{D:PertMod}
Let us consider $\catp$ to be the full subcategory of $\catn$ which has as objects the set of perturbative modules.
\end{definition}

\subsection{Relative (pre-)modular category}\label{SS:RelModDef}
In this section we recall briefly the main theoretical notion used to
define TQFTs as in \cite{CGP14} and \cite{D17}.

Let $\Gr$ be a commutative group.  The notion of a relative
$\Gr$-modular category appeared in \cite{CGP14}, in order to build
invariants of decorated 3-manifolds (where the decoration include a
$\Gr$-valued $1$-cohomology class).  We should call these categories
non-degenerate relative pre-modular as De Renzi gave in \cite{D17} an
additional ``modularity condition'' that ensures the existence of an
underlying $1+1+1$-TQFT.

The construction is based on the notion of modified trace (or m-trace
for short) which replaces the usual categorical trace in modular
categories.  Let $\catc$ be a linear ribbon category over a field
$\kk$ (a ribbon category with unit $\unit$ where the hom-sets are
$\kk$ vector spaces, the composition and tensor product of morphisms
are $\kk$-bilinear, and the canonical $\kk$-algebra map
$\kk \to \End_\catc(\unit), k \mapsto k \, \Id_\unit$ is an
isomorphism).  An object $V\in\catc$ is simple if
$\End_\catc(V)=\kk\Id_V$.  It is regular if its evaluation is an
epimorphism.
\begin{definition}[\cite{GKP1}]\ \\
\noindent a) An {\em ideal} $\ideal$ in 
$\catc$ is a full subcategory which satisfies:
\begin{enumerate}
\item stable by retract: if $W$ in $\ideal$, $V$ is an object of $\catc$ and there exist $\alpha:V\to W$ and $\beta:W\to V$ morphisms such that $\beta\alpha=\Id_{V}$ then $V\in \ideal$.

\item absorbing for the tensor product: if $U\in\ideal$ then for all
  $V \in \catc$, $U\otimes V\in\ideal$.
\end{enumerate}
b) An \emph{m-trace} on an ideal $\ideal$ is a family of linear functions 
$\{\mt_V\}_{V\in\ideal}$ where $\mt_V:\End_\catc(V)\to\kk$ satisfies:
\begin{enumerate}
 \item cyclicity property: $\mt_V(fg)=\mt_U(gf)$, 
\item  partial trace
property: 
$\mt_{U\otimes V}(f)=\mt_U(\ptr_V(f))$, for all $f\in\End(U\otimes V)$
\end{enumerate}
where $\ptr_V(f)$ is the right partial trace of $f$ obtained by using the duality morphism to close $f$ on the right. \\
c) Given an m-trace $\mt$ on $\ideal$, the {\em modified dimension} of $V\in \ideal$ is defined to be $\qd(V)=\mt_V(\Id_V)$.
\end{definition}
Remark that the notion of right and left ideals exist for pivotal
categories but they coincide in a ribbon category.

\begin{definition}
  
  Let $\catb$ be a 
  linear monoidal category. 

$1)$ A set of objects $ \mathcal D=\{ V_i \mid i \in J \} $ of $\catb$ is called a \emph{dominating set} if for any object $V \in \catb$ there exist indices $\{i_1,..,i_m \} \subseteq J $ and morphisms 
$\iota_k \in \Hom_{\catb}(V_{i_k},V), s_k \in \Hom_{\catb}(V,V_{i_k}),$ for all $ k\in \{1,...,m\}$ such that:
$$\Id_{V}=\sum_{k=1}^m \iota_k \circ s_k.$$ 

$2)$ A dominating set $\mathcal D$ is \emph{completely reduced} if:
$$dim_\kk(\Hom_\catb(V_i,V_j))=\delta_{i,j}, \text{ for all } i,j \in J.$$
\end{definition}

\begin{definition}[Free realisation] Let $\Zt$ be a commutative group with additive notation.  A \emph{free realisation} of $\Zt$ in $\catc$ is a set of objects $\{\sigma(k)\}_{k\in \Zt}$ such that 
\begin{enumerate}
  \item $\sigma(0)=\unit$,
  \item the quantum dimension of $\sigma(k)$ is $\pm 1$ for all $k\in \Zt$,
  \item $\sigma(j)\otimes \sigma(k)= \sigma(j+k)$ for all $j,k\in \Zt$,
  \item $ θ_{\sigma(k)}=\Id_{\sigma(k)}, \text{ for all } k \in \Zt $
where $\{\theta_V:V\to V\}_{V\in \catc}$ is the twist in $\catc$,
\item for any simple object $V$ in $\catc$ we have $V\otimes \sigma(k) \cong V$ if and only if $k=0$.  
\end{enumerate}
\end{definition}

\begin{remark}
The above definition was first given in \cite{CGP14} with the condition that the quantum dimensions are all $1$.  This condition was relaxed in \cite{D17}.  Our definition of a free realisation is equivalent to the one given in \cite{D17}.  
\end{remark}

\begin{definition}[A $\Gr$-grading on a category] \label{D:Gstr}
Let $\Gr$ be a commutative group with additive notation.   A \textit{$\Gr$-grading} on $\catc$ is an equivalence of linear categories
$\catc \cong \bigoplus_{g \in \Gr} \catc_g$ where 
$\{ \catc_g \mid g \in \Gr \}$ is a  family of full subcategories of $\catc$
satisfying the following conditions:
  \begin{enumerate}
  \item $\unit \in \catc_0$,
  \item  if $V\in\catc_g$,  then  $V^{*}\in\catc_{-g}$,
  \item  if $V\in\catc_g$, $V'\in\catc_{g'}$ then $V\otimes
    V'\in\catc_{g+g'}$,
  \item  if $V\in\catc_g$, $V'\in\catc_{g'}$ and $\Hom_\catc(V,V')\neq 0$, then
    $g=g'$.  
    \end{enumerate}
\end{definition}

\begin{definition}
Let $\Gr$ be a commutative group and a subset $\XX \subset \Gr$. We define the following two notions:
\begin{enumerate}
\item $\XX$ is \emph{symmetric} if $\XX=-\XX$.
\item $\XX$ is \emph{small} in $\Gr$ if for all $g_1,\ldots ,g_n\in \Gr$ we have:  
  $$ \bigcup_{i=1}^n (g_i+\XX) \neq \Gr.$$
\end{enumerate}

\end{definition}
 \begin{definition}(Relative pre-modular category)\label{D:1}
 Let $\Gr$ and $\Zt$ be commutative groups and $\XX$ be a small symmetric subset of $ \Gr$. Let $\catc$ be a linear ribbon category over a field $\kk$ with the following  data:
 \begin{enumerate}
 \item a $\Gr$-grading on $\catc$,
 \item a free realisation $\{\sigma(k)\}_{k\in \Zt}$ of $\Zt$ in $\catc_0$,
 \item
 a non-zero m-trace $\mt$ on the ideal of projective objects of $\catc$.
 \end{enumerate}
 A category $\catc$ with this data is called a \emph{pre-modular $\Gr
$-category relative to $(\Zt,\XX)$} if the following properties are satisfied:
 \begin{enumerate}
\item \emph{Generic semisimplicity}: \label{D:5} for every $g \in \Gr \setminus \XX$, there exists a finite set of regular simple objects:
$$\Theta(g):=\{ V_i \mid i \in I_g  \}$$
such the following set is a completely reduced dominating set for $\catc_g$:
$$\Theta(g) \otimes \sigma(\Zt):=\{ V_i \otimes \sigma(k) \mid i \in I_g, k\in \Zt  \}.$$

\item \emph{Compatibility}: \label{D:6}There exist a bilinear map $\psi: \Gr \times \Zt \rightarrow \kk^*$ such that:
$$c_{\sigma(k),V}\circ c_{V,\sigma(k)}= \psi(g,k) \cdot  \Id_{V \otimes \sigma(k)}.$$
for any $g\in \Gr$, $V \in \catc_g$ and $k \in \Zt$.
\end{enumerate}
 \end{definition}
In the following, we present the extra requirements that are needed in order to have a relative modular category.
\begin{definition}[Kirby color and Non-degenerate]\label{d:ndeg}
Let $\catc$ be a pre-modular $\Gr$-category relative to $(\Zt,\XX)$.

\begin{enumerate}
  \item For $g \in \Gr \setminus \XX$, let the \emph{Kirby color of index $g$} be the following formal linear combination of objects: $$\Omega_g:= \sum_{i \in I_g}d(V_i) \cdot V_i.$$  \item For $g \in \Gr \setminus \XX$ and $V\in \catc_g$ consider the scalars $\Delta_\pm\in \kk$ defined by
$$\epsh{Fig-nondeg}{20ex}
$$
where $ \dot{=}$ means that graphs are \emph{skein equivalent}, i.e.\ equal up to the Reshetikhin-Turaev functor associated with the category $\catc$, see for example Section 1.2 of \cite{D17}.   Lemma 5.10 of \cite{CGP14} implies these scalars do not depend neither on $ V$ nor on $g$. We say $\catc$ is \emph{non-degenerate} if $\Delta_{+}\Delta_{-}\neq 0$. 
\end{enumerate}
\end{definition}
We now state the additional condition required for a modular relative category.  
\begin{definition}[Relative modular category] \label{D:2}

We say that a category $\catc$ is a modular $\Gr
$-category relative to $(\Zt,\XX)$ if:
\begin{enumerate}
\item $\catc$ is a pre-modular $\Gr$-category relative to $(\Zt,\XX)$,
\item there exists a modularity parameter $ζ_{\Omega} \in \C^*$ such that for any $g,h \in \Gr \setminus \XX$ and any $i,j \in I_g$ one has:
  \begin{equation}\label{eq:mod}
    \epsh{relative_modularity}{14ex}\vspace*{4ex}.
  \end{equation}
\end{enumerate}
\end{definition}
As discussed above, non-degenerate pre-modular $\Gr$-categories give rise to invariants of 3-manifolds with some additional structure.
Futhermore modular $\Gr$-categories give rise to TQFTs.  In \cite{D17} it is shown that a modular $\Gr$-category is non-degenerate.

\section{Projective modules at root of unity}\label{S:ProjMod}
\subsection{The typical modules at root of unity}
As above, we assume that $m\neq n$ are non zero integers, $\ell$
is an odd integer greater than ${\rk}=m+n-1$ and $ξ$ is the root of unity
$ξ=\exp\bp{2i\pi/\ell}$.

Recall that a highest weight module is a module generated by a
highest weight vector. The super grading for the algebra $\g=\sll(m|n)$ lifts to a $\Z$-grading leading to a $\Z$-graded Lie algebra (see \cite{Kac78}):
$\g= \g_{-1}\oplus\g_0\oplus\g_1.$
Similarly $\Ux(\frakg)$ is $\Z$-graded:
$$\Ux(\frakg)= \Ux(\frakg_{-1})  \Ux(\frakg_0)  \Ux(\frakg_1).$$
Here $\Ux(\frakg_0)$ is the even part of $\Ux(\frakg)$ and the algebras $\Ux(\frakg_{-1})$ and $\Ux(\frakg_1)$ have a bases $\{F_{\beta_1}^{i_1}F_{\beta_2}^{i_2}...F_{\beta_{mn}}^{i_{mn}}\}$ and $\{E_{\beta_1}^{i_1}E_{\beta_2}^{i_2}...E_{\beta_{mn}}^{i_{mn}}\}$, respectively, where $\{\beta_1,...,\beta_{mn}\}= \roots^+_{\p 1}$ are the odd positive roots and $i_1,...,i_{mn}\in \{0,1\}$.  
\begin{definition}
The {\em typical envelope} of a highest weight $\Ux(\frakg_0)$-module $V_0$
is obtained by first extending the representation $V_0$ over $\Ux(\frakg_0)\Ux(\frakg_1)$, by
the trivial action of $\Ux(\frakg_1)$, then inducing a representation of the super quantum group
$\Ux(\frakg)= \Ux(\frakg_{-1}) \Ux(\frakg_0)  \Ux(\frakg_1)$. 
\end{definition}
\begin{remark}
  Due to the PBW theorem, the character of the typical envelope is the same as the one corresponding to the $\Ux(\frakg_0)$-module $\Ux(\frakg_{-1})\otimes V_0$
(where $\Ux(\frakg_0)$ acts by the adjoint action on $\Ux(\frakg_{-1})$).
  Hence the
typical envelope of a $\Ux(\frakg_0)$ module $V_0$ is a highest weight
$\Ux(\frakg)$-module with the same highest weight and whose dimension is
$2^{mn}\dim(V_0)$.
\end{remark}
Even if $V_0$ is simple as a $\Ux(\frakg_0)$-module, its typical
envelope might not be a simple $\Ux(\frakg)$-module. 
\begin{definition}\label{D:9}  
A highest weight
$\Ux(\frakg)$-module $V$ is called {\em typical} if it is simple and it is a typical envelope. Then its highest weight is also called a {\em typical weight}.  We also say a simple $U_{ξ}^{H}(\g)$-module is {\em typical} if it highest weight is typical.
\end{definition}
To describe the typical modules we need some notation.  The elements of  $\h^*$ (i.e.\ weights) are in one to one correspondence with $\C^{\rk}$:
Given $(c_1,...,c_{\rk})\in \C^{\rk}$ we set $a=c_{m}\in \C$ and $c=(c_1,...,c_{m-1},c_{m+1},...,c_{{\rk}})\in \C^{\rk-1}$ and  write the corresponding weight as \begin{equation}
\label{E:lambdaac}
\lambda_a^c = c_1 w_1+\cdots+c_{m-1}w_{m-1}+aw_m+c_{m+1}w_{m+1}+\cdots+c_{{\rk}}w_{\rk},
\end{equation}
where  $\{w_i\}$ are the 
representatives of the fundamental weights given in Subsection~\ref{SS:SuperLieAlg}.

In \cite{Z2} Zhang, gave a way to compute which $U_h(\g)$-modules are
typical (in the case $\g=\gl(m|n))$.  The next theorem shows a similar computation and gives a characterization of the typical $\Ux(\frakg)$-modules:
\begin{theorem}
  For $c\in \C^{m+n-2}$, $a\in\C$, the weight $\w ac$ is typical if
  and only if
  \begin{equation}
    \label{eq:typ}
    \prod_{\alpha\in\Delta_{\wb1}^+}
  \qn{\brk{\w ac+\rho,\alpha}}_ξ\neq0
  \end{equation}
  where $\qn {z}_ξ=ξ^z-ξ^{-z}$.  
This is equivalent to the following condition for all $ (i,j)\in\{1,\cdots,m\}\times\{1,\cdots,n\}$:
\begin{equation}\label{eq:typ'}
\sum_{k=i}^{m-1}c_k+a-\sum_{k=m}^{m+n-1-j}c_k+(m+1-i-j)\notin
  \frac\ell2\Z  .
\end{equation}
\end{theorem}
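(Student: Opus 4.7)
The plan is to follow Kac's classical singular–vector criterion for typical modules of $\slmn$, transported to the quantum setting at a root of unity as in Zhang \cite{Z2}. By PBW, the typical envelope $V(\w ac)$ has dimension $2^{mn}\dim V_0(\w ac)$ and a basis of the form $F_{\beta_1}^{i_1}\cdots F_{\beta_{mn}}^{i_{mn}} v$ with $v\in V_0(\w ac)$ and $i_k\in\{0,1\}$, where $\{\beta_1,\dots,\beta_{mn}\}=\roots_{\p1}^+$. Since $V_0(\w ac)$ is simple over $\Ux(\g_0)$, the envelope is simple iff the only singular vectors it contains are those of the top component, i.e.\ the highest–weight line.

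The key step is a Shapovalov-type determinant on the ``odd part''. For each $\alpha\in\roots_{\p1}^+$, one constructs an explicit candidate singular vector $s_\alpha\cdot v_\lambda$ (a suitable product of odd root vectors $F_\beta$) generalising Kac's $\Gamma$-operator; in the quantum integral form, the resulting Shapovalov form on $\Ux(\g_{-1})\otimes V_0$ factorises as
\begin{equation*}
\det\mathrm{Shap}_\lambda=\prod_{\alpha\in\roots_{\p1}^+}\qn{\brk{\lambda+\rho,\alpha}}_\xi^{\,k_\alpha}
\end{equation*}
for appropriate positive multiplicities $k_\alpha$. For generic $q$, the factor for $\alpha$ vanishes exactly when $\brk{\lambda+\rho,\alpha}=0$; at the root of unity $\xi=\exp(2i\pi/\ell)$, however, $\qn{z}_\xi=\xi^z-\xi^{-z}=0$ iff $\xi^{2z}=1$ iff $2z\in\ell\Z$, i.e.\ $z\in\frac\ell2\Z$. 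Hence the envelope is simple iff the product \eqref{eq:typ} is nonzero, yielding the first equivalence.

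The second equivalence is a routine pairing computation. Using the expressions for $w_k$ listed in Subsection~\ref{SS:SuperLieAlg} together with $\brk{\varepsilon_i,\varepsilon_j}=\delta^i_j$ and $\brk{\delta_i,\delta_j}=-\delta^i_j$, one checks
\begin{equation*}
\brk{w_k,\varepsilon_i-\delta_j}=\begin{cases}1&\text{if }k<m,\ i\le k,\\ 1&\text{if }k=m,\\ 1&\text{if }k>m,\ j>k-m,\\ 0&\text{otherwise,}\end{cases}
\end{equation*}
and $\brk{\rho,\varepsilon_i-\delta_j}=\tfrac12((m-n+1-2i)+(m+n+1-2j))=m+1-i-j$. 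Summing gives $\brk{\w ac+\rho,\varepsilon_i-\delta_j}$ in the form appearing in \eqref{eq:typ'}, and the condition $\qn{\cdot}_\xi\neq 0$ translates directly into $(\cdot)\notin\frac\ell2\Z$.

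The main obstacle is the first step: establishing the Shapovalov factorisation at the root of unity. One cannot simply specialise the generic formula, since some factors could acquire or lose vanishings under $q\mapsto\xi$. The cleanest route is to work in the $\K_\ell$-integral form $U_q(\g)$, construct the $s_\alpha$ over $\K_\ell$ using the PBW basis of $\Ux(\g_{-1})$, prove the generic determinant identity there by a triangularity argument on the $2^{mn}$-dimensional odd part, and then specialise — the hypothesis $\ell\ge \rk$ together with $\ell$ odd ensures no spurious factors appear from the Serre relations or from $q^{d_i}-q^{-d_i}$ in \eqref{E:1EF}. Once this is in place, the singular vector $s_\alpha v_\lambda$ is honestly singular whenever the corresponding factor of $\qn{\cdot}_\xi$ vanishes, giving the ``only if'' direction, and the non-vanishing of $\det\mathrm{Shap}_\lambda$ forces simplicity of the envelope, giving the ``if'' direction.
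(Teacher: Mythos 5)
Your overall architecture (non-degeneracy of a contravariant form on the induced module $\Leftrightarrow$ simplicity, plus the observation that $\qn z_ξ=0$ iff $z\in\frac\ell2\Z$, plus the pairing computation $\brk{\w ac+\rho,\ve_i-δ_j}$ for the second equivalence) is sound, and the last of these matches what the paper does. But the load-bearing step — the factorisation $\det\mathrm{Shap}_\lambda=\prod_{\alpha\in\roots_{\p1}^+}\qn{\brk{\lambda+\rho,\alpha}}_ξ^{k_\alpha}$ — is asserted, and the route you sketch toward it would not go through as stated. First, the Gram matrix of the basis $\{F_{\beta_1}^{i_1}\cdots F_{\beta_{mn}}^{i_{mn}}v\}$ is not triangular (distinct subsets of $\roots_{\p1}^+$ can have equal weight sums), and since $v$ ranges over all of $V_0$, the ``diagonal'' contributions a priori involve $\brk{\mu+\rho,\beta}$ for every weight $\mu$ of $V_0$, not just the highest weight; showing that all such factors except those attached to $\lambda$ cancel or are units is precisely the hard content of a Shapovalov formula for Kac modules, and you have not supplied it. Second, your plan to prove the identity over $\K_\ell$ and specialise fails at the first step: the simple $\Ux(\g_0)$-module $V_0$ with an arbitrary highest weight at a root of unity is in general \emph{not} the specialisation of the generic simple module (its dimension can drop), so the generic Gram matrix is not the one whose determinant you need. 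Third, the parenthetical claim that the singular vectors $s_\alpha v_\lambda$ are ``suitable products of odd root vectors'' is false for non-simple odd roots $\alpha$ (such singular vectors involve even root vectors as well) — though if you actually had the determinant formula you would not need them, since the radical of the contravariant form is already the maximal proper submodule.

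The paper avoids all of this with Kac's original, much lighter argument: set $\Gamma_\pm=\prod_{\beta\in\roots_{\p1}^+}E_\beta$ resp.\ $\prod F_\beta$, show that the submodule generated by \emph{any} nonzero vector of the typical envelope contains $\Gamma_+\Gamma_-v_0=\lambda v_0$, and compute the single scalar $\lambda$ by reducing $\Gamma_+\Gamma_-$ modulo the left ideal generated by the $E_\alpha$ to a Laurent polynomial in the $K_i$. That reduction is an identity in $U_q(\g)$ itself, independent of the structure of $V_0$, so it specialises at $q=ξ$ without any of the issues above and evaluates on the highest weight to the product in \eqref{eq:typ}; the converse direction is handled by noting that $\Gamma_-v_0$ generates a proper submodule when the scalar vanishes. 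If you want to salvage your approach, you should either prove the determinant factorisation directly at $ξ$ (not by specialisation), or switch to the single-operator argument.
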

\begin{proof}
 This proof is similar to the case for non-quantum classical Lie superalgebra.
Let us consider the elements
$$
\Gamma_-=F_{\beta_1}F_{\beta_2}...F_{\beta_{mn}} \text{ and } \Gamma_+=E_{\beta_1}E_{\beta_2}...E_{\beta_{mn}},
$$
where $\{\beta_1,...,\beta_{mn}\}= \roots^+_{\p 1}$ are the odd positive roots.
   If $V$ is the typical envelope of a module $V_0$
  generated by its highest weight vector $v_0$
   of weight $\w ac$,
 then any non zero
  vector $v$ generates a sub-module which can be shown to contain
  $\Gamma_+\Gamma_-v_0=\lambda\qn1_q^{-mn} v_0$, where $\lambda$ is a scalar.

  Zhang compute the scalar $\lambda$ which reduces to
  $\prod_{\alpha\in\Delta_{\wb1}^+}
  \qn{\brk{\w ac+\rho,\alpha}}_q$
  (up to a unit of the form $\pm q^k$) for the case of a
  perturbative highest weight module.
  But this computation (similar to the Harish-Chandra isomorphism) can
  be performed in $U_q(\g)$ modulo the left ideal generated by
  $\{E_\alpha\}_{\alpha\in\roots_+}$.  Then $\Gamma_+\Gamma_-$ reduces
  to the Laurent polynomial in the $K_i$ given in terms of the
  elements in Equation \eqref{eq:Ki} by
  $\pm q^k{\qn1^{-mn}_q} {\prod_{\alpha\in\Delta_{\wb1}^+}
    \bp{q^{\brk{\rho,\alpha}}K_\alpha-q^{-\brk{\rho,\alpha}}K_\alpha^{-1}}}$).
 Specialized at $q=ξ$
  and evaluated on a highest weight $\w ac\in\C^{{\rk}}$, we obtain
  Equation \eqref{eq:typ}.

  Now if $\Gamma_+\Gamma_-v_0=0$ then $\Gamma_-v_0$ generates a
  strict sub-module of $V$ which is not typical.  Conversely, if
  $\lambda\neq0$ then the sub-module generated by $v$ contains $v_0$
  so it is the whole $V$.  This shows that $V$ is simple.
\end{proof}
\begin{notation}\label{N:Ctypical}
 For any $c\in\C^{m+n-2}$, consider the set 
 $$\TT_c:=\{ a \in \C \mid \w a c \text{ is typical}\}.$$
\end{notation}
\begin{corollary}
  Using the previous characterisation, we conclude that for
  any $c\in\C^{m+n-2}$ the set $\TT_c$ is a dense open set in $\C$.
\end{corollary}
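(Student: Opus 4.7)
The plan is to unpack the characterisation \eqref{eq:typ}: $a\in\TT_c$ precisely when, for every odd positive root $\alpha\in\Delta^+_{\bar1}$, one has $\qn{\brk{\w ac+\rho,\alpha}}_ξ\neq0$, which by definition of $\qn{z}_ξ=ξ^z-ξ^{-z}$ with $ξ=\exp(2i\pi/\ell)$ is equivalent to $\brk{\w ac+\rho,\alpha}\notin\frac{\ell}{2}\Z$. Thus $\TT_c$ is the complement in $\C$ of a finite union of ``bad'' sets indexed by $\alpha\in\Delta^+_{\bar1}$, and the task is to show each bad set is a discrete closed subset of $\C$.

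The key observation is that $\alpha=ε_i-δ_j$ gives $\brk{w_m,\alpha}=1$ (using $w_m=\sum_iε_i$), while the other fundamental weights $w_k$ ($k\neq m$) are in $\h_{\bar0}^*$ and contribute only to the constant term. Therefore $\brk{\w ac+\rho,\alpha}$ is an affine function of $a$ of the form $a+C_{\alpha}(c)$ for some constant $C_\alpha(c)\in\C$ depending only on $c$, $\rho$, and $\alpha$. Explicitly, this recovers the linear expression in \eqref{eq:typ'}. Hence the bad set associated to $\alpha$ is
\[
B_\alpha(c)\;=\;\tfrac{\ell}{2}\Z\,-\,C_\alpha(c)\;\subset\;\C,
\]
a translate of the discrete lattice $\tfrac{\ell}{2}\Z$ by the complex number $-C_\alpha(c)$; in particular $B_\alpha(c)$ is closed and discrete in $\C$, with empty interior.

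Since $\Delta^+_{\bar1}$ is finite, the complement $\TT_c=\C\setminus\bigcup_{\alpha\in\Delta^+_{\bar1}}B_\alpha(c)$ is the complement of a finite union of closed nowhere-dense sets. A finite union of closed sets is closed, and a finite union of nowhere-dense sets in $\C$ is nowhere dense (this follows immediately, or from Baire if one prefers). Consequently $\TT_c$ is open and dense in $\C$, concluding the proof.

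No real obstacle is anticipated: the only substantive input is the computation of $\brk{w_m,\alpha}$ for $\alpha\in\Delta^+_{\bar1}$, which ensures that the coefficient of $a$ in the affine expression does not vanish (so the bad set is genuinely discrete, not all of $\C$). Once this is in place the argument is purely topological.
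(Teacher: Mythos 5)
Your proof is correct and is exactly the argument the paper intends: the paper gives no explicit proof, but the condition \eqref{eq:typ'} is affine in $a$ with unit coefficient, so each bad locus is a translate of the discrete closed set $\tfrac{\ell}{2}\Z$ and $\TT_c$ is the complement of a finite union of such sets. Nothing to add.
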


\subsection{$\catn$ is a braided category}
In this subsection we will show that the category $\catn$ is a
braided category. The existence of a braiding will be very useful in the further computation of the m-trace as we will see.
Further on, we will construct a ribbon category from $\catn$. 

For the case of unrolled quantum groups at roots of unity, it was shown in 
\cite{GP13} that their representation theory leads to a ribbon category and in particular it provides a braiding. We aim to obtain the same result, in the context of super quantum groups. We do this in two steps: first we start with the $R$-matrix of the super quantum groups at generic $q$, and truncate it such that it can be evaluated at a particular root of unity. Morally, the issue with the whole generic $R$-matrix is the fact that the denominators vanish for roots of unity, and the idea would be to keep the part of $R$ which gives a well defined specialisation. Then, in step two, we will show that the truncated $R$-matrix leads to an operator which satisfies the braiding properties. Let us make this precise. \\
 
{\bf Step 1-Truncation.}  Yamane showed (\cite{Yam94}) that the $R$-matrix ${\Rm}^h $ of Equation
\eqref{eq:Rh} makes $U_h(\g)$ into a quasi triangular super-Hopf algebra.  In
particular, this means that the following relations are satisfied:
\begin{equation}\label{E:PropertiesRmh}
{(\Delta\otimes\Id ){\Rm}^h={\Rm}^h_{12} {\Rm}^h_{23}}, \;\;\;
{(\Id \otimes \Delta){\Rm}^h={\Rm}^h_{13} {\Rm}^h_{12}}, \;\;\;
{{\Rm}^h\Delta(x)=\Delta^{op}(x) {\Rm}^h,}
\end{equation}
for all  $x \in U_h(\g)$.
Our aim is to construct an $R$-matrix for the unrolled quantum group,
at roots of unity.

\begin{proposition} The algebra
  $U_h(\g)$ has a topological Poincaré-Birkhoff-Witt basis $\calB$ given by the monomials  $$\prod_{i=1}^{{\rk}}H_i^{k_i} \prod_{\beta_i\in
    \roots^+}E_{\beta_i}^{x_i}\prod_{\beta_i\in
    \roots^+}F_{\beta_i}^{y_i},$$ where $x_i,y_i,k_i\in\N$ with 
  $x_i,y_i\le1$ if $\beta_i\in\roots^+_{\p1}$.
\end{proposition}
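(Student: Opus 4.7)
The plan is to derive this from the PBW theorem of Yamane \cite{Yam94}, which has already been invoked in the paper to define the root vectors $E_\beta$ and $F_\beta$. The proof combines the standard triangular decomposition of $U_h(\g)$ with Yamane's explicit bases for the positive and negative unipotent subalgebras.

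First, I would establish the topological triangular decomposition
$$U_h(\g) \;\cong\; U_h(\nil_-) \,\hat\otimes\, U_h(\h) \,\hat\otimes\, U_h(\nil_+),$$
where $U_h(\nil_\pm)$ and $U_h(\h)$ denote the closed $\C[[h]]$-subalgebras topologically generated by $\{F_i\}$, $\{E_i\}$ and $\{H_i\}$ respectively, and $\hat\otimes$ is the $h$-adically completed tensor product. Surjectivity of the multiplication map amounts to bringing every word in the generators into triangular form by iterated use of the defining relations in Definition \ref{D:Usl}; injectivity follows from the classical PBW theorem for $U(\g)$ (recovered as the quotient by $h$) via a standard argument using the $h$-adic filtration.

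Next, I would identify each tensor factor. Since the $H_i$ commute and satisfy no further relation among themselves, the monomials $\{H_1^{k_1}\cdots H_{\rk}^{k_{\rk}} : k_i\in\N\}$ form a topological basis of $U_h(\h)$. For the positive part, Yamane's PBW theorem (Theorem 10.5.1 of \cite{Yam94}) asserts that the ordered products $\prod_{\beta_i\in\roots^+} E_{\beta_i}^{x_i}$, taken in the chosen normal ordering on $\roots^+$, form a topological basis of $U_h(\nil_+)$, with $x_i\in\N$ when $\beta_i$ is even and $x_i\in\{0,1\}$ when $\beta_i\in\roots^+_{\bar 1}$. The symmetric statement holds for $U_h(\nil_-)$. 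Assembling the three factors yields the basis $\calB$.

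The main obstacle is the square-zero property $E_\beta^2=0=F_\beta^2$ for odd $\beta$. For a simple odd root this is a defining relation of Definition \ref{D:Usl}; for a non-simple odd positive root $\beta$, the vector $E_\beta$ is produced inductively as a $q$-super-commutator of simple root vectors via Lusztig-type braid group operators, and its nilpotency of order two follows from iterated use of the super-Jacobi identity together with the odd Serre relation \eqref{E:Qserre4}. I would rely on Yamane's paper for this technical verification rather than re-derive it here.
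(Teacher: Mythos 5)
The paper gives no proof of this proposition at all: it is stated as a known fact and implicitly deferred to Yamane's PBW theorem for quantized enveloping superalgebras, so there is nothing to compare your argument against line by line. Your outline is the standard and correct way to justify the statement, and it rests on exactly the same external input the authors are relying on. Two small points are worth tightening. First, your triangular decomposition $U_h(\nil_-)\,\hat\otimes\,U_h(\h)\,\hat\otimes\,U_h(\nil_+)$ naturally produces monomials in the order $F\cdots H\cdots E\cdots$, whereas the proposition orders them as $H\cdots E\cdots F\cdots$; this is harmless, since $H_i^k E_\beta = E_\beta (H_i+\beta(H_i))^k$ and the straightening of $E$'s past $F$'s via $[E_i,F_j]$ only contributes monomials of lower length, so the transition matrix between the two ordered monomial families is unitriangular — but you should say a word about it, because as written the basis you construct is not literally the set $\calB$ of the statement. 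Second, be careful with the citation: the relevant PBW statement in Yamane is in his Section 10 (the paper itself cites Lemma 10.3.1 and Theorem 10.6.1 for adjacent material), and the claim $E_\beta^2=0$ for the non-simple odd (isotropic) roots $\ve_i-\delta_j$ is indeed part of what his construction of the Cartan--Weyl generators delivers, so deferring that verification to him is legitimate. With those caveats, the proposal is sound.
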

Let $\calB^<$ be the subset of monomials for which all the $x_i,y_i$
are strictly smaller than $\ell$.  Consider the splitting
$U_h(\g)=U^{<}\oplus I$ where $U^{<}$ is the closure of
$\operatorname{Span}_{\C[[h]]}(\calB^<)$ and $I$ is the closure of
$\operatorname{Span}_{\C[[h]]}(\calB\setminus\calB^<)$.  Let
  $$p: U_h(\g)\rightarrow U^{<}$$
  be the projection map parallel to $I$.
  We will use the tensor product of this projection map, in order to truncate the $h$-adic $R$-matrix.
  Actually it is enough to project just one of its two components:
\begin{definition}[Truncated $h$-adic R-matrix]
We define
\begin{equation}\label{E:DefRm<}
{\Rm}^{<h}=(p \otimes p) \ {\Rm}^{h}=(p \otimes\Id ) \ {\Rm}^{h}=(\Id \otimes p) \ {\Rm}^{h}.
\end{equation}
\end{definition}
\begin{lemma} The truncated $h$-adic $R$-matrix satisfies the following identities:
\begin{enumerate}\label{L:Rmatrix}
\item{ $(p \otimes p \otimes p)(\Delta \otimes\Id )  {\Rm}^{<h}=(p \otimes p \otimes p) ({\Rm}_{13}^{<h} {\Rm}_{23}^{<h})$ }
\item{ $(p \otimes p \otimes p)(\Id \otimes \Delta)  {\Rm}^{<h}=(p \otimes p \otimes p) ( {\Rm}_{13}^{<h} {\Rm}_{12}^{<h})$ }
\item{ $(p \otimes p) {\Rm}^{<h}(\Delta^{op}(x))=(p \otimes p) (\Delta(x)){\Rm}^{<h}, \ \forall x \in U_h(\g)$. }
\end{enumerate}
\end{lemma}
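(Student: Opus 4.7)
The three identities all follow from the corresponding quasi-triangularity relations \eqref{E:PropertiesRmh} for $\Rm^h$, once we record one structural fact. Write $\Rm^h = \Rm^{<h} + R$ where $R := \Rm^h - \Rm^{<h}$; then $R$ lies in $I \otimes I$, and in particular in $\ker(p \otimes p)$. This is exactly the content of the three coincidences $(p\otimes p)\Rm^h=(p\otimes\Id)\Rm^h=(\Id\otimes p)\Rm^h$ used in \eqref{E:DefRm<}, and it is visible directly from \eqref{E:Rcheck}: expanding each $q$-exponential, every summand of $\check{\Rm}^h$ has the form $\prod_\alpha E_\alpha^{k_\alpha}\otimes\prod_\alpha F_\alpha^{k_\alpha}$ with the same multi-exponent $(k_\alpha)$ on both tensor slots, and multiplication by the Cartan-diagonal factor $\HR^h$ only affects the $H$-part. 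Hence a PBW monomial of $\Rm^h$ contributes to $R$ exactly when some $k_\alpha\geq\ell$, and in that case both tensor slots lie in $\calB\setminus\calB^<$.

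For items (1) and (2), I would apply $p^{\otimes 3}$ to the identities $(\Delta\otimes\Id)\Rm^h=\Rm^h_{13}\Rm^h_{23}$ and $(\Id\otimes\Delta)\Rm^h=\Rm^h_{13}\Rm^h_{12}$, and expand $\Rm^h=\Rm^{<h}+R$ on both sides. On the left-hand side of (1), the contribution $(\Delta\otimes\Id)R = \sum \Delta(m_i)\otimes n_i$ has its third tensor slot $n_i\in I$ untouched by the coproduct, so $p$ in position $3$ kills it; the argument for (2) is symmetric with positions $1,3$ exchanged. On each right-hand side, every mixed product $\Rm^{<h}_{ij}R_{i'j'}$, $R_{ij}\Rm^{<h}_{i'j'}$ or $R_{ij}R_{i'j'}$ places an element of $I$ in some tensor slot which is not shared between the two factors (and hence not altered by the multiplication), so $p$ kills that whole term. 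Only the pure $\Rm^{<h}$-term survives on each side, yielding (1) and (2).

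For item (3), I would apply $p\otimes p$ to $\Rm^h\Delta(x)=\Delta^{op}(x)\Rm^h$ and substitute $\Rm^h=\Rm^{<h}+R$; the desired equality becomes equivalent to
\begin{equation*}
  (p\otimes p)\bigl(\Delta^{op}(x)\,R - R\,\Delta(x)\bigr)=0 \qquad \forall\, x\in U_h(\g).
\end{equation*}
The naive slot-killing argument of (1), (2) no longer applies: multiplying a tensor in $I\otimes I$ on the right by $\Delta(x)$ can produce corrections in $U^<\otimes U^<$ through commutators such as $[E_i,F_\alpha^{k_\alpha}]\propto [\ell]_q$, since $I$ is neither a left nor a right ideal of $U_h(\g)$ (already $F_i\cdot E_i^{\ell}$ has a non-trivial $E_i^{\ell-1}$-component in $U^<$). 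The cleanest route, which I would take, is to evaluate on weight modules $V\in\catn$: since $E_\alpha^\ell=F_\alpha^\ell=0$ on every weight vector, the operator $\rho_{V\otimes V}(R)$ vanishes, so both sides of the displayed identity act as zero on $V\otimes V$, and the full quasi-triangularity relation for $\Rm^h$ descends to the required equality in $\End_\C(V\otimes V)$. Using that a separating family of such modules (e.g.\ Verma-type modules of generic highest weight, faithful on $U^<$ by the PBW theorem) detects elements of $\mathrm{Im}(p\otimes p) \subset U^<\otimes U^<$, one lifts this back to item (3).

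The main obstacle is precisely this last step: items (1) and (2) are formal consequences of $R\in I\otimes I$ and of the disjointness of the tensor slots that carry the surviving $I$-factors, but item (3) is genuinely non-formal because $I$ is not stable under multiplication. Establishing the faithfulness of the action of $U^<\otimes U^<$ on a sufficient family of weight modules is the non-trivial content of the argument; alternatively one can do a direct PBW bookkeeping of the Cartan corrections proportional to $[\ell]_q$ produced by commuting $\Delta(x)$ past each $E_\alpha^{k_\alpha}\otimes F_\alpha^{k_\alpha}$ in $R$, and check that they cancel between the two sides of the displayed equation.
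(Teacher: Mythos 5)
Your treatment of items (1) and (2) is correct and is essentially the paper's own argument in different clothing: the paper inserts $p=p\circ p$ and uses the coincidences in \eqref{E:DefRm<} to replace $\Rm^h$ by $\Rm^{<h}$ one slot at a time, which is exactly your decomposition $\Rm^h=\Rm^{<h}+R$ with $R$ supported on PBW monomials lying in $I$ in \emph{both} tensor slots, followed by the observation that every term involving $R$ leaves an element of $I$ in a tensor slot where the multiplication is by $1$.

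For item (3) you have correctly located the delicate point --- $\ker(p\otimes p)$ is not stable under multiplication by $\Delta(x)$ --- but your proposed resolution does not work. Lemma \ref{L:Rmatrix} is an identity in (a completion of) $U_h(\g)^{\otimes 2}$ over $\C[[h]]$, \emph{before} any specialization: the relations $E_\alpha^\ell=F_\alpha^\ell=0$ are imposed only in the unrolled algebra $U^{H}_{\xi}(\g)$, so on weight modules of $U_h(\g)$ the operator $R=\Rm^h-\Rm^{<h}$ does not vanish; worse, $R$ cannot be evaluated on objects of $\catn$ at all, since its coefficients involve $1/(n)_{q_\alpha}!$ with $n\ge\ell$, which have poles at $q=\xi$ --- this is the very reason for truncating. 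Hence ``evaluate on $\catn$ and lift back'' cannot establish the $h$-adic statement. The workable repair is more elementary: for a generator $x\in\{E_i,F_i,H_i\}$, every summand of $\Delta(x)$ and of $\Delta^{op}(x)$ carries, in one of the two slots, either $1$ or a Cartan element $q^{\pm d_iH_i}$, and left or right multiplication by Cartan elements preserves $I$ (it only rescales PBW monomials without changing the $E$- and $F$-exponents); the lone root vector always lands in the slot opposite to the one used for killing, so the slot-killing of (1)--(2) survives and $(p\otimes p)(R\Delta(x))=(p\otimes p)(\Delta^{op}(x)R)=0$ for generators. This is all that is actually consumed downstream, because item (3) only enters through Lemma \ref{L:RmOperator}(3), an operator-intertwining identity that is multiplicative in $x$ and therefore follows from the generator case. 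For arbitrary $x\in U_h(\g)$, as the lemma is literally stated, neither your argument nor the paper's one-line reference to \cite{GP13} settles the matter without this reduction (or the genuinely laborious PBW bookkeeping you mention at the end).
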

\begin{proof}
The proof is word for word the same as the proof of Proposition 39 in \cite{GP13}.  The main idea is to project the equalities in Equation \eqref{E:PropertiesRmh}. For example, using $p=p \circ p$ and Equation \eqref{E:DefRm<} we have 
$$(p \otimes p \otimes p)(\Delta\otimes\Id ){\Rm}^h=(p \otimes p \otimes p)(\Delta\otimes\Id )(\Id\otimes p){\Rm}^h=(p \otimes p \otimes p)(\Delta\otimes\Id ) {\Rm}^{<h}.$$
Using Equation \eqref{E:PropertiesRmh}, a similar computation shows this is equal to $(p \otimes p \otimes p) ({\Rm}_{13}^{<h} {\Rm}_{23}^{<h})$.  
\end{proof}

{\bf Step 2-Operators at roots of unity.} 
Now, we introduce the truncated matrix, defined in the similar way as the h-adic one, but which uses truncated exponentials instead. The advantage is that this expression will be well defined at roots of unity. Let us make this precise. 

We start with the analog of the Cartan part of the $R$-matrix, which
we will use for the representations over the super quantum group at
roots of unity $U_{ξ}^H(\g)$.
\begin{definition}
Consider the operator in $\catn$ defined as:
$$\HR^{ξ}:=
\exp\bp{\frac{2i\pi}\ell \sum_{i,j}d_{ij}\ H_i \otimes H_j}.$$
\end{definition}

\begin{definition}[Truncated R-matrix]
  Consider the truncated quantum exponential:
  $$\exp_{ξ}^<(x):=\sum_{n=0}^{\ell-1} \frac{x^n}{(n)_{ξ}!}.$$  Similar to Equation \eqref{E:Rcheck}, let
\begin{equation*}
  \check{{\Rm}}^{ξ}=\prod_{\alpha \in\roots^{+}}\exp_{ξ_{\alpha}}^<\big((-1)^{\p
    \alpha}a_{\alpha}^{-1}(ξ-ξ^{-1})(E_{\alpha}\otimes F_{\alpha})\big).
\end{equation*}
Now, we define the expression:
$${\Rm}^{ξ}:=\check{{\Rm}}^{ξ}\HR^{ξ} .$$
\end{definition}

\begin{remark}\label{R:H}
The operators $\HR^h$ on $Rep(U_{h}(\g))$ and $\HR^{ξ}$ on $\catn$ satisfy the same relations. In particular, this leads to the following properties for $\HR^{ξ}$.
\end{remark}

\begin{proposition}\label{P:HH}
  The following operators on objects of $\catn$ coincide:
\begin{enumerate}
\item {
    $\HR^{ξ}(x \otimes y)={ξ}^{\brk{\alpha, \beta}}\big( x
    K_{\beta} \otimes y K_{\alpha}\big) \HR^{ξ}$} where
  $\alpha,\beta$ are the weights of $x,y$ respectively.
\item { $(\Delta \otimes\Id ) \HR^{ξ}=\HR^{ξ}_{13}\HR^{ξ}_{23}.$}
\item{ $(\Id  \otimes \Delta) \HR^{ξ}=\HR^{ξ}_{13}\HR^{ξ}_{12}.$}

\end{enumerate}
\end{proposition}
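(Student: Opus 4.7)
The plan is to prove each of the three identities by a direct computation on weight vectors, using the fact that the purely Cartan operator $\HR^{\xi}$ acts diagonally in any weight basis.

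First I would establish the auxiliary bilinear identity
\[
\sum_{i,j} d_{ij}\, \lambda(H_i)\, \theta(H_j) \;=\; \brk{\lambda, \theta}
\]
for all $\lambda, \theta \in \h^*$. By bilinearity it suffices to check this on pairs of simple roots; using $\alpha_i(H_k) = a_{ki}$ from Subsection~\ref{SS:SuperLieAlg}, the left side becomes $\sum_{k,l} d_{kl} a_{ki} a_{lj}$. The defining property of $(d_{ij})$ as the inverse of $(a_{ij}/d_j)$, together with the symmetry of $(d_i a_{ij})$ (which in turn forces $(d_{ij})$ to be symmetric), should collapse this double sum to $d_i a_{ij} = \brk{\alpha_i, \alpha_j}$.

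For identity (1), I would evaluate both sides on a weight vector $v \otimes w$ of weight $(\mu, \nu)$. The auxiliary identity implies that $\HR^{\xi}$ acts on such a vector by the scalar $\xi^{\brk{\mu,\nu}}$. Since $x, y$ have weights $\alpha, \beta$, the vector $(x \otimes y)(v \otimes w) = xv \otimes yw$ has weight $(\mu + \alpha, \nu + \beta)$, so the left-hand side of (1) produces $\xi^{\brk{\mu+\alpha,\nu+\beta}}(xv \otimes yw)$. Expanding the bilinear form and applying $K_\alpha w = \xi^{\brk{\alpha,\nu}} w$ and $K_\beta v = \xi^{\brk{\beta,\mu}} v$ from \eqref{eq:Ki}, the right-hand side should collapse to the same scalar multiple.

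For identities (2) and (3), I would exploit the primitivity $\Delta(H_i) = H_i \otimes 1 + 1 \otimes H_i$ to obtain
\[
(\Delta \otimes \Id)\bp{\sum_{i,j} d_{ij}\, H_i \otimes H_j} = \bp{\sum_{i,j} d_{ij}\, H_i \otimes H_j}_{13} + \bp{\sum_{i,j} d_{ij}\, H_i \otimes H_j}_{23}.
\]
Since all $H_k$ commute pairwise, the two summands commute, so the exponential of their sum factors as the product $\HR^{\xi}_{13}\HR^{\xi}_{23}$; the argument for (3) is parallel after swapping positions. There is no real obstacle here: the statement is essentially formal, driven by the coalgebra structure on $\h$ and the normalization of $\brk{\cdot,\cdot}$. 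Indeed, as Remark~\ref{R:H} anticipates, the same relations already hold for $\HR^h$ in the $h$-adic setting by \cite{Yam94}, and since $\HR^{\xi}$ involves no truncated exponentials, the specialization at $q = \xi$ is unambiguous.
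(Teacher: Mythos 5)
Your proposal is correct and follows essentially the same route as the paper: the paper's (terse) proof rests precisely on the scalar action $\HR^{ξ}(v\otimes v')=ξ^{\brk{w(v),w(v')}}v\otimes v'$ together with the commutation of the Cartan generators past weight operators, which is what your auxiliary identity $\sum_{i,j}d_{ij}\lambda(H_i)\theta(H_j)=\brk{\lambda,\theta}$ and the weight-vector evaluation make explicit. Your treatment of (2) and (3) via primitivity of the $H_i$ is the same formal argument the paper invokes through Remark \ref{R:H}, so no further comment is needed.
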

\begin{proof}
  First remark that in last two equations above the operator $\HR^ξ$
  is defined as a limit of elements of $U^H_{ξ}(\g)^{\otimes2}$ and
  its coproducts should be understood as the limit of the
  coproducts. Now the proposition follows because the Cartan generators
  at roots of unity satisfy the following properties:
 $$ 
   { H_ix=x\bp{H_i+ \frac{1}{d_i}\brk{\alpha_i,w(x)}}},
  \;\;\;\;\; { K_{\alpha_i}={ξ_i}^{H_i}},$$
 \begin{equation}\label{E:ActionHR}{ \HR^{ξ}(v \otimes v')=ξ^{\brk{w(v),w(v')}}v \otimes v'.}
 \end{equation}
  Here, $w(v)$ is the weight of the weight vector $v$.
\end{proof}
Property (1) of Proposition \ref{P:HH} implies that conjugation with 
$\HR^{ξ}$ induces a well defined automorphism of
$U^H_{ξ}(g)^{\otimes2}$.

Summarizing (see Table \ref{Table:SummaryRmatrix}), we have $3$ main steps:
\begin{enumerate}
  \item construct a truncation of the $R$-operator : $ {\Rm}^{ξ}$,
  \item define a Cartan part operator at roots of unity $\HR^{ξ}$,
  \item next, combine them into the braiding ${\Rm}^{ξ}$.
\end{enumerate}
\begin{table}[htp]
\caption{Summary of $R$-martix and operators}
\begin{center}
\begin{tabu} to 1\textwidth { | X[c] | X[c] | X[c] | X[c] |  } 
 \hline
 & $h$-adic super quantum group & truncation & Super quantum group at roots of unity \\ 
\hline                                    
Super quantum group & $U_h(\g)$& projection on $U^< $ & $U^H_{ξ}(\g)$   \\
\hline
Ring& $\C[[h]]$ & $\C(q)$ & $\C$ \\
\hline
quasi R-matrix& $\check{{\Rm}}^h$ & $
\check{{\Rm}}^{<h}=(p \otimes p) \check{{\Rm}}^h$ & $\check{{\Rm}}^{ξ}=\check{{\Rm}}^{<h}\mid_{q=ξ}$ \\
\hline 
Cartan part & $\HR^h$& & $\HR^{ξ}$  \\
\hline
R-matrix & $\Rm^h= \check{{\Rm}}^h\cdot \HR^h $ & ${\Rm}^{<h}=(p \otimes p) {\Rm}^h$ & $ {\Rm}^{ξ}=\check{{\Rm}}^{ξ}\cdot  \HR^{ξ} $  \\
\hline
\end{tabu}
\end{center}
\label{Table:SummaryRmatrix}
\end{table}

\begin{definition}[Projection of the $h$-adic $R$-operator]
In order to have a correspondent of the $R$-operator over $\C[[h]]$, we introduce the following element: $$\check{{\Rm}}^{<h}:=(p\otimes p)\check{{\Rm}}^h.$$
\end{definition}

\begin{lemma}
\label{L:RmOperator}
The following operators
are equal
on objects of $\catn$:
\begin{enumerate}
\item {$(\Delta\otimes\Id ){\Rm}^{ξ}={\Rm}^{ξ}_{13} {\Rm}^{ξ}_{23}$},
\item{$(\Id \otimes \Delta){\Rm}^{ξ}={\Rm}^{ξ}_{13} {\Rm}^{ξ}_{12}$},
\item{${\Rm}^{ξ}\Delta(x)=\Delta^{op}(x) {\Rm}^{ξ},  \text{ for all } x \in U^{H}_{ξ}(\g) $}.
\end{enumerate}
\end{lemma}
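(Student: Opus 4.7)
The strategy is to derive the three identities by specializing the identities of Lemma \ref{L:Rmatrix} at $q = ξ$ and then evaluating on objects of $\catn$. Two ingredients are needed: first, that each projection $p^{\otimes k}$ appearing in Lemma \ref{L:Rmatrix} reduces to the identity on the relevant module after specialization; and second, that the truncated $h$-adic $R$-operator ${\Rm}^{<h}$ specializes at $q = ξ$ to the operator ${\Rm}^{ξ}$ defined directly at the root of unity.

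For the first ingredient, I would argue that for any $V \in \catn$ and any $u \in I$, the specialization of $u$ at $q = ξ$ acts as zero on $V$. A monomial in $\calB \setminus \calB^<$ must contain some factor $E_{\beta_i}^{x_i}$ or $F_{\beta_i}^{y_i}$ with exponent $\geq \ell$; since the PBW basis restricts $x_i, y_i \leq 1$ when $\beta_i \in \roots^+_{\p 1}$, the offending root must lie in $\roots^+_{\p 0}$. The defining relations $E_{\beta_i}^{\ell} = F_{\beta_i}^{\ell} = 0$ in $U^H_{ξ}(\g)$ then force this monomial to act as zero. Consequently the projections $(p \otimes p \otimes p)$ on both sides of Lemma \ref{L:Rmatrix}(1,2) and $(p \otimes p)$ in Lemma \ref{L:Rmatrix}(3) become the identity on objects of $\catn$ after specialization.

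For the second ingredient, I would decompose ${\Rm}^{<h} = \check{{\Rm}}^{<h}\HR^h$. The Cartan part $\HR^h = q^{\sum_{i,j} d_{ij} H_i \otimes H_j}$ acts on weight vectors $v \otimes w$ of weights $\lambda, \mu$ as $q^{\brk{\lambda, \mu}}$, which specializes to $ξ^{\brk{\lambda, \mu}}$; this matches the action of $\HR^{ξ}$ according to \eqref{E:ActionHR}. The quasi-$R$ part $\check{{\Rm}}^{<h}$ is a truncated product of quantum exponentials in which only terms $E_\alpha^k \otimes F_\alpha^k$ with $k < \ell$ survive; the coefficients contain $1/(k)_q!$ which has no pole at $q = ξ$ for $k < \ell$, so the specialization is well-defined and matches $\check{{\Rm}}^{ξ}$. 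Together these show that ${\Rm}^{<h}\big|_{q=ξ}$ and ${\Rm}^{ξ}$ agree as operators on every object of $\catn$.

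Combining the two ingredients, specializing Lemma \ref{L:Rmatrix} at $q = ξ$ on objects of $\catn$ yields identities (1), (2), and (3) of the present lemma; for (3) one uses the intertwiner relation from \eqref{E:PropertiesRmh} transported through the truncation. The main obstacle is the rigorous interchange of the formal $h$-adic framework with the root-of-unity specialization: although $\check{{\Rm}}^h$ is an infinite $h$-adic sum, its action on any fixed weight module reduces to a finite expression after truncation, so the specialization becomes purely algebraic and compatible with composition. This reduction is exactly what makes the projection argument above go through.
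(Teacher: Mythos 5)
Your proposal follows essentially the same route as the paper: specialize the projected $h$-adic identities of Lemma \ref{L:Rmatrix} at $q=\xi$, using that the kernel of $p$ acts by zero on objects of $\catn$ (via $E_\alpha^\ell=F_\alpha^\ell=0$ for even roots) and that $\check{{\Rm}}^{<h}$ has no poles at $\xi$ while the Cartan operators agree by \eqref{E:ActionHR}. The only difference is organizational: because $\HR^h$ specializes only as an operator on weight modules and not as an element of $U_q(\g)^{\otimes 2}$, the paper first cancels the outer Cartan factors and rewrites the remaining middle one as a conjugation $\HR^h_{13}\check{{\Rm}}^{<h}_{23}(\HR^h_{13})^{-1}$ (which commutes with specialization by Proposition \ref{P:HH}(1)) before setting $q=\xi$, and only then multiplies $\HR^{\xi}_{13}\HR^{\xi}_{23}$ back in --- a minor refinement of the factor-by-factor specialization you describe.
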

\begin{proof}
Let us summarize the strategy to prove this lemma: we use the fact that the projections of ${\Rm}^{<h}$ satisfy the properties from Lemma \ref{L:Rmatrix}. Then, we investigate the implications on the Cartan part and the element $\check{{\Rm}}^h$. Afterwards, we specialise all the relations at roots of unity, and obtain information about the corresponding $\HR^{ξ}$ and $\check{{\Rm}}^{ξ}$. 

We will show the first relation, and the others will follow in an analogous manner.

From Lemma \ref{L:Rmatrix},
we have:
$$(p \otimes p\otimes p)(\Delta \otimes\Id ) (\check{{\Rm}}^{<h}\HR^{h})=(p \otimes p\otimes p) \bp{\check{{\Rm}}^{<h}_{13}\HR^{h}_{13} \cdot \check{{\Rm}}^{<h}_{23}\HR^{h}_{23} }\ $$
The co-product properties for $\HR^h$ (see Remark \ref{R:H} and Proposition \ref{P:HH}) give that:
$$(\Delta \otimes\Id )\HR^h=\HR^{h}_{13}\HR^{h}_{23}.$$
Next we use the facts that $\Delta $ is an algebra morphism and $\HR^{h}_{13} $ is invertible to get
\begin{equation*}\label{eq:0}
(p \otimes p\otimes p)(\Delta \otimes\Id ) (\check{{\Rm}}^{<h})\HR^{h}_{13}\HR^{h}_{23}=(p \otimes p\otimes p) \bp{\check{{\Rm}}^{<h}_{13} \cdot \HR^{h}_{13} \check{{\Rm}}^{<h}_{23} ({\HR^{h}_{13})}^{-1}}\HR^{h}_{13} \HR^{h}_{23}. \ 
\end{equation*}
Now canceling $\HR^{h}_{13} \HR^{h}_{23}$ we have:
\begin{equation}\label{eq:aaa1}
(p \otimes p\otimes p)(\Delta \otimes\Id ) (\check{{\Rm}}^{<h})=(p \otimes p\otimes p) \bp{\check{{\Rm}}^{<h}_{13} \cdot \HR^{h}_{13} \check{{\Rm}}^{<h}_{23} {(\HR^{h}_{13})}^{-1}}. \ 
\end{equation}

The key point of introducing the $h$-adic element $\check{{\Rm}}^{<h}$ is the fact that this has no poles when evaluating $q$ to the root of unity $ξ$, in other words:
$$\check{{\Rm}}^{<h}\mid_{q=ξ}=\check{{\Rm}}^{ξ},$$
and also Proposition \ref{P:HH} Part (1) implies that the conjugation by the the Cartan part commute with specialization at roots of unity:
$$\HR^{h}_{13}\check{{\Rm}}^{<h}_{23} {\HR^{h}_{13}}^{-1}\mid_{q=ξ}=\HR^{ξ}_{13}\check{{\Rm}}^{<ξ}_{23}{\HR^{ξ}_{13}}^{-1}$$
as operators on objects of $\catn$.
This shows, that Equation \eqref{eq:aaa1} can be specialised at roots
of unity.  As the kernel of $p$ specializes to $0$ in $U^{H}_{ξ}(\g)$, it leads to
the following
equality of operators on objects of $\catn$:
\begin{equation}\label{eq:3}
\ (\Delta \otimes\Id ) (\check{{\Rm}}^{ξ}) = \check{{\Rm}}^{ξ}_{13}\cdot \HR^{ξ}_{13}\check{{\Rm}}^{<ξ}_{23}{\HR^{ξ}_{13}}^{-1} .
\end{equation}
Using Proposition \ref{P:HH} Part (2) we can multiply by
$(\Delta \otimes\Id )\HR^{ξ}=\HR^{ξ}_{13}\HR^{ξ}_{23}$, on the
right, and conclude the equality of operators on objects of $\catn$:
$$(\Delta\otimes\Id ){\Rm}^{ξ}={\Rm}^{ξ}_{13} {\Rm}^{ξ}_{23}.$$
\end{proof}

The properties of Lemma \ref{L:RmOperator} directly imply the following corollary.  
\begin{corollary}[Braided category]\label{C:BraidedCat}
 For $V,W \in \catn$ define $c_{V,W} : V \otimes W \rightarrow V \otimes W$ as:
$$c_{V,W}=\tau \circ {\Rm}^{ξ}$$
where $\tau (x \otimes y)= (-1)^{\bar{x}\bar{y}} y \otimes x$ is the super-flip.  Then the family $\{c_{V,W}\}_{V,W\in \catn}$ is a braiding  for $\catn$.
\end{corollary}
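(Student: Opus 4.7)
The plan is to verify the four defining properties of a braiding --- $U^H_\xi(\g)$-equivariance, naturality, invertibility, and the two hexagon identities --- for the family $\{c_{V,W} = \tau \circ \Rm^\xi : V \otimes W \to W \otimes V\}$; each translates directly into one of the three statements of Lemma \ref{L:RmOperator} (item (3) gives equivariance and items (1), (2) give the hexagons).

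First I would check that $c_{V,W}$ is a morphism in $\catn$. The super-flip satisfies $\tau \circ \Delta^{\op}(x) = \Delta(x) \circ \tau$ for all $x \in U^H_\xi(\g)$, so combining this with part (3) of Lemma \ref{L:RmOperator} yields $c_{V,W} \circ \Delta(x) = \Delta(x) \circ c_{V,W}$. Parity is preserved because $\Rm^\xi$ is manifestly even: each summand $E_\alpha \otimes F_\alpha$ of $\check{\Rm}^\xi$ has total parity $2\bar\alpha = \bar 0$, and $\HR^\xi$ is exponential in even elements of $\h \otimes \h$. For invertibility on an object $V \otimes W$, the Cartan factor $\HR^\xi$ is invertible since the $H_i$ act diagonalizably, while each truncated quantum exponential occurring in $\check{\Rm}^\xi$ acts as $\Id$ plus a nilpotent operator on a finite-dimensional weight module (the operator $E_\alpha \otimes F_\alpha$ strictly shifts the pair of weights) and is therefore invertible by the geometric series. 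Naturality of $\{c_{V,W}\}$ is then immediate: on each finite-dimensional module $\Rm^\xi$ expands as a finite sum of tensor products of algebra elements, hence commutes with tensor products of $U^H_\xi(\g)$-linear even morphisms, and $\tau$ is super-natural.

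The hexagon identities
$c_{U\otimes V,W} = (c_{U,W}\otimes \Id_V)\circ(\Id_U\otimes c_{V,W})$ and $c_{U,V\otimes W} = (\Id_V\otimes c_{U,W})\circ(c_{U,V}\otimes \Id_W)$
follow from parts (1) and (2) of Lemma \ref{L:RmOperator} after moving the super-flips through the tensor factors. Explicitly, for the first hexagon I would write $c_{U\otimes V,W} = \tau \circ (\Delta\otimes\Id)(\Rm^\xi)$, expand using $(\Delta\otimes\Id)(\Rm^\xi) = \Rm^\xi_{13}\Rm^\xi_{23}$, and then recombine the two copies of $\Rm^\xi$ together with the two required super-flips into $(c_{U,W}\otimes \Id_V)\circ(\Id_U\otimes c_{V,W})$. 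The analogous argument using item (2) handles the second hexagon. The main bookkeeping obstacle will be to keep track of the Koszul signs introduced when $\tau$ is commuted past components of $\Rm^\xi$ sitting in different tensor slots; these signs cancel cleanly because $\Rm^\xi$ is an even operator and the hexagon axioms themselves are parity-neutral, so no sign discrepancies survive.
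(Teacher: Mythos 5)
Your proposal is correct and follows essentially the same route as the paper, which simply observes that the three identities of Lemma \ref{L:RmOperator} directly imply the corollary; you have merely written out the standard verifications (equivariance from item (3), hexagons from items (1)--(2), plus invertibility and naturality) that the paper leaves implicit.
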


\subsection{Pivotal structure}
\begin{notation}
  Let $$\pi=2(1-\ell)\rho_{\p0}-2\rho_{\p1}=2\rho-2\ell\rho_{\p0}\in\Lambda_R,$$
  and the associated element $K_\pi$ (see Equation \eqref{eq:Ki}).
\end{notation}
\begin{lemma}
  The group-like element $K_\pi$ is a pivot for the
  super Hopf algebra $U^{H}_{ξ}(\g)$, meaning that:
$$S^2(u)=K_\pi uK_\pi^{-1}, \forall u\in U^{H}_{ξ}(\g) .$$
\end{lemma}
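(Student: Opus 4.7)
Both $S^2$ and the inner automorphism $u\mapsto K_\pi uK_\pi^{-1}$ are algebra automorphisms of $U^{H}_{ξ}(\g)$, so it suffices to verify the identity on the generators $H_i,K_j^{\pm},E_i,F_i$. The plan is to dispose of the Cartan generators immediately and then reduce the computation on $E_i,F_i$ to a scalar identity that follows from \eqref{eq:rho} together with $ξ^\ell=1$.

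For $H_i$ and $K_j^\pm$ the check is trivial: from the formulas for the antipode $S(H_i)=-H_i$, $S(K_j^\pm)=K_j^\mp$, so $S^2$ fixes each; and these elements commute with every $K_\alpha$, so conjugation by $K_\pi$ is also the identity on them.

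The core calculation is on $E_i$ (the case of $F_i$ is entirely parallel). Using $S(E_i)=-K_iE_i$ and the super-antipode rule $S(ab)=(-1)^{\bar a\bar b}S(b)S(a)$ together with $\bar{K_i}=0$, one gets
\[
S^2(E_i)=S(-K_iE_i)=-S(E_i)S(K_i)=K_iE_iK_i^{-1}=ξ^{d_ia_{ii}}E_i=ξ^{\brk{\alpha_i,\alpha_i}}E_i,
\]
which equals $E_i$ when $i=m$ (since $\brk{\alpha_m,\alpha_m}=0$) and $ξ^{\pm2}E_i$ otherwise. On the other side, \eqref{eq:Ki} gives
\[
K_\pi E_iK_\pi^{-1}=ξ^{\brk{\pi,\alpha_i}}E_i,\qquad
\brk{\pi,\alpha_i}=\brk{2\rho,\alpha_i}-2\ell\brk{\rho_{\p0},\alpha_i}.
\]
By \eqref{eq:rho} the first term is $\brk{\alpha_i,\alpha_i}$, so it remains to show $ξ^{-2\ell\brk{\rho_{\p0},\alpha_i}}=1$, i.e.\ that $2\brk{\rho_{\p0},\alpha_i}\in\Z$ for every simple root. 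This is checked directly from the formula for $2\rho_{\p0}$ in Section~\ref{SS:SuperLieAlg}: for $i\ne m$ one finds $\brk{2\rho_{\p0},\alpha_i}=\brk{\alpha_i,\alpha_i}\in\{\pm2\}$, and for $i=m$ one finds $\brk{2\rho_{\p0},\alpha_m}=n-m\in\Z$. Hence the two exponents agree and $S^2(E_i)=K_\pi E_iK_\pi^{-1}$.

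The analogous computation for $F_i$ yields $S^2(F_i)=K_iF_iK_i^{-1}=ξ^{-\brk{\alpha_i,\alpha_i}}F_i$, matched by $K_\pi F_iK_\pi^{-1}=ξ^{-\brk{\pi,\alpha_i}}F_i$ via the same identity. The only subtle point is the odd root $\alpha_m$, where $\brk{\alpha_m,\alpha_m}=0$ while $\brk{2\rho_{\p0},\alpha_m}=n-m\ne0$; the $2\ell\rho_{\p0}$ summand in the definition of $\pi$ is precisely what is needed for the corresponding factor to specialize to $1$ at the root of unity. This is the only place where the specific form of $\pi$ (rather than merely $2\rho$) matters, and it is the step that required us to work at a root of unity of odd order $\ell$.
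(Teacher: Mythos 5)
Your proof is correct and follows essentially the same route as the paper: verify the identity on generators, use Equation \eqref{eq:rho} to identify $\xi^{\brk{2\rho,\alpha_i}}$ with $\xi^{\brk{\alpha_i,\alpha_i}}$, and observe that the $-2\ell\rho_{\p 0}$ part of $\pi$ contributes trivially (the paper phrases this as $K_{2\ell\rho_{\p 0}}$ being central, which you verify explicitly via $\brk{2\rho_{\p 0},\alpha_i}\in\Z$ and $\xi^{\ell\Z}=1$). The only inessential slip is the closing remark: the parity of $\ell$ plays no role here, and the $2\ell\rho_{\p 0}$ summand is not what makes the identity work but rather a correction one must check to be harmless.
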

\begin{proof}
  This can be checked directly on generators using the fact that
  $K_{2\ell\rho_{\p0}}$ is central and Equation \eqref{eq:rho} which
  implies for $i=1\cdots \rk$
$$K_\pi E_iK_\pi^{-1} =K_{2\rho}E_iK_{2\rho}^{-1} =\xi^{\brk{2\rho,\alpha_i}}E_i
=\xi^{\brk{\alpha_i,\alpha_i}}E_i =K_{\alpha_i} E_iK_{\alpha_i}^{-1}
=S^{2}(E_i).$$ A similar computation can be made for $F_i$.
\end{proof}
The pivotal element can be used to define duality morphisms in
$\catn$.
\begin{definition}[Duality]
Given $V\in\catn$, let $V^*$ be the dual super vector space
$\Hom_\C(V,\C)$ endowed with the action
$u\vp=(-1)^{\p u\p \vp}\vp(S(u)\,\cdot\,)$.
Let us consider the following morphisms:
\begin{align*}
  & \ev_V \colon V^*\otimes V \to\unit, \quad \coev_V\colon \unit \to
  V \otimes V^*,
  \\
  & \tev_V \colon V\otimes V^* \to\unit, \quad \tcoev_V\colon \unit
  \to V^* \otimes V,
\end{align*}
defined using a basis $\{v_i\}_i$ of $V$ and the corresponding dual basis  $\{v_i^{*}\}_i$ of $V^*$ by:
\begin{align*}
& \ev_V:v_i^{*}\otimes v_j\mapsto v_i^{*}(v_j)=δ_i^j,  \quad
\coev_V\colon 1\mapsto \sum_i v_i\otimes v_i^{*},\\
&   \tev_V \colon v_j\otimes v_i^{*}\mapsto (-1)^{\p v_i}v_i^{*}(K_\pi.v_j), \quad
\tcoev_V\colon 1\mapsto \sum_i (-1)^{\p v_i}v_i^{*}\otimes(K_\pi^{-1}.v_i).
\end{align*}
\end{definition}
The following result comes from Proposition 2.9 of \cite{B}:
\begin{proposition}
The category $\catn$ together with the morphisms $\ev_V, \coev_V,$ $\tev_V$ and $\tcoev_V$  is a pivotal category.
\end{proposition}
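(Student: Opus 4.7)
The plan is to reduce the statement to the general fact (cited as Proposition 2.9 of \cite{B}) that a $\C$-linear category of finite-dimensional modules over a super Hopf algebra equipped with a grouplike pivotal element admits a canonical pivotal structure whose duality morphisms are given precisely by the formulas above. Hence the task is to check that the hypotheses of that proposition hold in our setting.

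First, I would verify that the dual $V^{*}$ (with the action $u\vp=(-1)^{\p u\p \vp}\vp(S(u)\cdot)$) actually lives in $\catn$, i.e.\ that it is again a weight module. If $\{v_i\}$ is a weight basis of $V$ with weights $\lambda_i$, then on the dual basis the $H_j$ act diagonally with weights $-\lambda_i$; moreover $K_j$ acts on $v_i^{*}$ as multiplication by $\xi^{-d_j\lambda_i(H_j)}=\xi^{d_j H_j}\big|_{V^{*}}$, so Condition (2) of Definition \ref{D:catn} is preserved. Equivariance of $\ev_V$ and $\coev_V$ then follows from the standard Hopf/bialgebra identities for $S$ and $\Delta$, together with the sign convention on the action on $V^{*}$; the snake identities reduce to $\sum_i v_i^{*}(v_j)v_i=v_j$ and its dual.

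Second, I would check that the twisted duality $(\tev_V,\tcoev_V)$ does exhibit $V^{*}$ as a right dual. The crucial input is the preceding lemma, $S^{2}(u)=K_{\pi}uK_{\pi}^{-1}$, combined with the fact that $K_{\pi}$ is grouplike, so it acts diagonally on tensor products. Equivariance of $\tev_V$ is the identity
\[
\tev_V(u.(v_j\otimes v_i^{*}))=\varepsilon(u)\,\tev_V(v_j\otimes v_i^{*}),
\]
which after expansion and use of the pivot relation becomes the standard bialgebra identity for $(\ev_V, K_{\pi})$; similarly for $\tcoev_V$. The zig-zag identities $(\Id_V\otimes\tev_V)\circ(\tcoev_V\otimes\Id_V)=\Id_V$ and its dual are pure linear algebra involving $K_{\pi}$ and $K_{\pi}^{-1}$, which cancel because the signs $(-1)^{\p v_i}$ are inserted exactly to compensate the super flip appearing when the strand is turned.

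Third, I would check that the natural transformation $\phi_V\colon V\to V^{**}$, $\phi_V(v)(\vp)=(-1)^{\p v\p \vp}\vp(K_{\pi}.v)$, induced by these duals is monoidal, i.e.\ $\phi_{V\otimes W}=\phi_V\otimes \phi_W$ (up to the canonical identification $V^{**}\otimes W^{**}\cong(V\otimes W)^{**}$). This is exactly where the grouplike property $\Delta(K_{\pi})=K_{\pi}\otimes K_{\pi}$ is used: it guarantees that $K_{\pi}$ acting on $V\otimes W$ equals the tensor of its actions, so $\phi$ respects tensor products.

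The only genuine obstacle is bookkeeping of the Koszul signs: in the super setting the twist of strands picks up $(-1)^{\p v\p\vp}$ factors, and these must line up exactly with the signs inserted in the definitions of $\tev_V$ and $\tcoev_V$. Once the sign rules are set (super flip on tensor products, shifted action on duals), everything reduces to the classical Hopf-algebraic verification, and the proposition follows from \cite[Prop.\ 2.9]{B}.
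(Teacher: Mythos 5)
Your proposal is correct and follows essentially the same route as the paper, which simply invokes Proposition 2.9 of \cite{B} after having established in the preceding lemma that $K_\pi$ is a grouplike element implementing $S^2$. The additional verifications you spell out (that duals remain weight modules, equivariance, the zig-zag identities, and the Koszul-sign bookkeeping) are exactly the hypotheses implicitly being checked when the paper applies that citation.
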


\subsection{$\Gr$-structure on $\catn$}
Recall that $\Lambda_R$ is the root lattice, which is the $\Z$-lattice generated by the simple roots.  Let us
  consider the group $\Gr=\bp{\h^*/\Lambda_R,+}$.
\newcommand{\card}{\operatorname{card}} 
\begin{definition}[Grading on the category $\catn$]\label{D:3}
  For
   $[\lambda]\in \Gr$, let $\lambda\in \h^*$ be any representative of $[\lambda]$.  Define $\catn_{[\lambda]}$ to be the full subcategory
  of $\catn$ with objects consisting of supermodules whose
  weights are all of the form $\lambda+\alpha$ for some $\alpha \in\Lambda_R$.
\end{definition}

Let 
$$\Lambda_\Zt=\qn{\lambda\in\h^*:\,2\brk{\lambda,\alpha_i}\in\ell\Z \text{ for all } i=1,\cdots,\rk}$$
and let $\Lambda_\Zt^0=\Lambda_\Zt\cap\Lambda_R$.  Consider the commutative group 
$$\Zt=(\Z/2\times\Lambda_\Zt^0,+)\simeq (\Z/2\times\Z^{\rk},+).$$

  For $z=(\bar z,\lambda)\in \Zt$, we define the one dimensional $\UH$-module $\sigma{(\bar z,\lambda)}=\C$  as a
vector superspace with super grading $\bar z$ for any vector  and  $\UH$ action given by:
\begin{equation}
  E_i \curvearrowright \sigma{(\bar z,\lambda)}=F_i \curvearrowright \sigma{(\bar z,\lambda)}=0,\quad
  H_i \curvearrowright \sigma{(\bar z,\lambda)} = λ(H_i) \cdot \Id_{\sigma{(\bar z,\lambda)}}.
\end{equation}

\begin{notation}[Critical set $\XX$]\label{N:1} Let $D$ be the
  dimension of the strictly negative Borel:
  $D=\ell^{\card(\roots_{\p0}^-)}2^{\card(\roots_{\p1}^-)}=\ell^{\frac12(m^2+n^2-m-n)}2^{mn}$.
  Since a simple highest weight module $V$ is generated by the action
  of the 
  strictly negative Borel on its highest weight, it must have dimension
  $\dim_\C V\le D$.
  Let us denote by $\XX$ the set of all $x\in \Gr$
  such that there exists a simple highest weight module $V$ in
  $\catn_x$ of dimension $\dim_\C V<D$.
\end{notation}
\begin{theorem}\label{T:SScatn}
  The subset $\XX$ of $\Gr$ is symmetric, small and satisfies the following property:  
for any $[\lambda]\in \Gr\setminus \XX$, the category $\catn_{[\lambda]}$ is
semisimple and has a completely reduced dominating set 
  $$Θ_{[\lambda]} ⊗ \sigma(\Zt)=\{ V_i \otimes \sigma(z) \mid i \in I_{[\lambda]}, z\in \Zt  \}$$ where $Θ_{[\lambda]}$ is a finite set of simple
  modules.
\end{theorem}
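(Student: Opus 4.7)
The plan is to verify the three assertions in turn, leaning on the key characterisation: a simple highest weight module in $\catn_{[\lambda]}$ has dimension exactly $D$ iff its highest weight $\w ac$ is typical (satisfies \eqref{eq:typ'}); otherwise it is a proper quotient of its typical envelope and hence of strictly smaller dimension. Thus $\XX$ is the image in $\Gr=\h^*/\Lambda_R$ of the atypical locus of $\h^*$, and working out its structure reduces the problem to easy congruence geometry in $\h^*$.

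Symmetry is immediate: if $V$ is a simple highest weight module of dimension $<D$ in $\catn_x$, then $V^*$ is simple of the same dimension in $\catn_{-x}$, and since every finite-dimensional simple weight module has a highest weight vector, $V^*$ is itself a simple highest weight module, giving $-x\in\XX$. For smallness, the atypical locus in $\h^*$ is the countable union of the complex affine hyperplanes cut out by the conditions in \eqref{eq:typ'} indexed by $(i,j)\in\{1,\ldots,m\}\times\{1,\ldots,n\}$. Its image in the $2\rk$-real-dimensional group $\Gr$ remains a countable union of proper real-analytic hypersurfaces, so any finite translate $\bigcup_{i}(g_i+\XX)$ is still such a countable union and has empty interior, hence cannot equal $\Gr$.

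For the last property, fix $[\lambda]\in\Gr\setminus\XX$, so every simple highest weight module in $\catn_{[\lambda]}$ is a typical envelope. The plan is to show that such typical envelopes are projective in $\catn$: using the PBW decomposition $\Ux(\g)=\Ux(\g_{-1})\Ux(\g_0)\Ux(\g_1)$ and the semisimplicity of the weight module category over $\Ux(\g_0)$ at non-singular highest weights, each typical envelope is induced from a simple (hence projective) $\Ux(\g_0\oplus\g_1)$-module, and therefore projective in $\catn$. Semisimplicity of $\catn_{[\lambda]}$ then follows, so every object splits as a direct sum of typical simples. For the dominating set, the highest weights of simples in $\catn_{[\lambda]}$ form the coset $\lambda+\Lambda_R$, and tensoring with $\sigma(\bar z,\lambda_0)$ shifts the highest weight by $\lambda_0\in\Lambda_\Zt^0$ and the parity by $\bar z$; since $\Lambda_\Zt^0\subset\Lambda_R$ has finite index, the $\sigma(\Zt)$-orbits of simples in $\catn_{[\lambda]}$ are indexed by a finite set $I_{[\lambda]}$, and selecting one representative $V_i$ per orbit yields $\Theta_{[\lambda]}$. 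Regularity of each $V_i$ is automatic because $V_i$ is simple and $\ev_{V_i}$ is non-zero; complete reducibility of $\Theta_{[\lambda]}\otimes\sigma(\Zt)$ follows since non-isomorphic typical simples admit no non-trivial morphisms.

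The main obstacle is the projectivity step above: establishing that typical simples at a root of unity are projective in $\catn$ requires combining PBW induction from the even subalgebra with a careful semisimplicity statement for the category of $\Ux(\g_0)$ weight modules at generic highest weights; once this is in hand, symmetry, smallness, semisimplicity and the parametrisation of the dominating set all follow in a fairly mechanical fashion from the description of $\XX$ through \eqref{eq:typ'} and the lattice inclusion $\Lambda_\Zt^0\subset\Lambda_R$.
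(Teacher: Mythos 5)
There are two genuine gaps. First, your ``key characterisation'' is false: a simple highest weight module has dimension $D$ if and only if its highest weight is typical \emph{and} the underlying simple $\Ux(\g_0)$-module $V_0$ has the maximal dimension $\ell^{\operatorname{card}(\roots_{\p0}^-)}$. Typicality alone only guarantees that the typical envelope, of dimension $2^{mn}\dim V_0$, is simple; at a root of unity $\dim V_0$ drops on the non-generic (e.g.\ integral) locus. Concretely, for $\sll(2|1)$ the weight $\w a0$ with $a$ generic is typical but the simple module has dimension $4<D=4\ell$, so $[\w a0]\in\XX$ even though $\w a0$ is typical. Hence $\XX$ is strictly larger than the image of the atypical hyperplanes, and your smallness argument, which only covers that image, does not apply to all of $\XX$. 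It is repairable (the De Concini--Kac degeneration locus for the even part is again a countable union of hyperplanes, which is how the paper argues via the dense open set $\{\w ac: c_i\notin\Z,\ a\in\TT_c\}$), but as written the claim you prove small is not $\XX$.

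Second, and more seriously, the semisimplicity step rests on ``simple (hence projective) $\Ux(\g_0\oplus\g_1)$-module,'' which is circular: simplicity does not imply projectivity at a root of unity, and the assertion that generic simple $\Ux(\g_0)$ weight modules are projective is precisely the kind of generic semisimplicity statement the theorem is meant to establish. You flag this as ``the main obstacle'' but never supply it, so the proof is incomplete at its core. The paper's route is different and self-contained: it forgets the $H_i$-action to view every object of $\catn_{[\lambda]}$ as a module over a finite-dimensional quotient algebra $U_ξ^{[\lambda]}(\g)$ of dimension $\ell^{m^2+n^2-1}2^{2mn}$, shows by the Jacobson density theorem and an exact dimension count ($\ell^{\rk}$ simples of dimension $D$ each) that this algebra is a product of matrix algebras, hence semisimple, and then lifts semisimplicity back to $\UH$-modules by exhibiting a homogeneous weight basis of the space of highest weight vectors $\bigcap_i\ker\rho_V(E_i)$. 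Your bookkeeping for the dominating set (finiteness of $\Theta_{[\lambda]}$ from the finite index of $\Lambda_\Zt^0$ in $\Lambda_R$, Schur's lemma for complete reducibility) matches the paper and is fine once semisimplicity is actually established.
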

\begin{proof}
  First, a module and its dual have the same dimension as complex vector
  space, so $\XX$ is symmetric.  Next, we show $\catn_{[\lambda]}$ satisfies the generically semisimple condition for every $[\lambda]\in \Gr\setminus \XX$.  In this proof, we identify a weight $\mu\in \h^*$ with a tuple $\mu=(\mu_1,...,\mu_\rk)\in \C^\rk$ coming from $\mu=\sum_i\mu_i\alpha_i$.  
  Let $U_{ξ}^{\Lambda_R}(\g)$ be the sub Hopf algebra of $U^{H}_{ξ}(\g)$
  generated by the
  set $\{K_{i},E_i,F_i:i=1\cdots {\rk}\}$ where the $K_i$ generate the
  group ring $\C[\Lambda_R]$.  Here we consider a slightly larger quantum
  group: let
  $U_{ξ}^{\Lambda_W}(\g)=\C[\Lambda_W]\otimes_{\C[\Lambda_R]}U_{ξ}^{\Lambda_R}(\g)$
  be the algebra generated by the set
  $\{K_{w_i},E_i,F_i:i=1\cdots {\rk}\}$ with the usual relations
  including $K_{w_i}E_jK_{w_i}^{-1}=\xi^{\brk{w_i,\alpha_j}}E_j$.
   Let $U_{ξ}^{[\lambda]}(\g)$ be the quotient of
  $ U_{ξ}^{\Lambda_W}(\g)$ by the ideal generated by
  $\{K_{w_i}^\ell-ξ^{\ell\lambda_i}:i=1\cdots {\rk}\}$.  

  Then, PBW theorem implies
  that
  $$
  \dim_\C U_{ξ}^{[\lambda]}(\g) =
  \ell^{{\rk}}\ell^{\card(\roots_{\p0})}2^{\card(\roots_{\p1})} =
  \ell^{m^2+n^2-1}2^{2mn}.
  $$

  On the other hand, forgetting the action of the generator $H_i$, we
  see that every module in $\catn_{[\lambda]}$ has a structure of an
  $U_{ξ}^{[\lambda]}(\g)$-module where $K_{w_i}$ acts as the scalar
  $\xi^{\brk{w_i,\cdot}}$ on weight spaces.  If two simple highest
  weight modules $V,V'$ of $\catn_{[\lambda]}$ are isomorphic as
  $U_{ξ}^{[\lambda]}(\g)$-modules, then there exists $z\in\Zt$ such
  that $V'\simeq V\otimes \sigma(z)$.  Reciprocally, the isomorphism
  class of a simple  $U_{ξ}^{[\lambda]}(\g)$ highest weight 
  module in $\catn_{[\lambda]}$ only depends of its
  highest
  weight $\mu\in\C^{\rk}$ modulo $(\ell\Z)^{\rk}$.
  
  To be in $\catn_{[\lambda]}$, $\mu$ should satisfies
  $ξ^{\ell\mu_i}=ξ^{\ell\lambda_i}$ so there are $\ell^{\rk}$ such modules
  $\{V_i\}$.  Now for generic $c$, the simple $U_{ξ}(\g_{\p0})$-module
  with highest weight $\lambda^a_c$ has dimension
  $\ell^{\card(\roots_{\p0}^-)}$ (see \cite{DK,GP13},
  where generic $c$ means $c$ belongs to some open
  dense set which contains weights $\lambda^a_c$ with $c_i\notin\Z$)
  and its typical envelope (which is simple if $a\in\TT_c$)
  has dimension
  $D=\ell^{\card(\roots_{\p0}^-)}2^{\card(\roots_{\p1}^-)}$.

  The dense open set
  $\{\lambda^a_c:c_i\not\in\Z,a\in\TT_c\}\subset\h^*$ only contains
  highest weight of typical modules of dimension $D$.  Its complement
  is formed by weights whose degree belongs to a small subset of $\Gr$
  containing $\XX$.  Thus $\XX$ is small.
  
  Then the density lemma ensures that the algebra map from
  $U_{ξ}^{[\lambda]}(\g)$ to the $\ell^{\rk}$-fold cartesian product of
  $D\times D$-matrices $\prod_i\End_\C(V_i)$ is surjective.  Counting
  dimensions, we see that this map is an isomorphism thus
  $U_{ξ}^{[\lambda]}(\g)$ is a semi-simple algebra for $[\lambda]\notin \XX$.

  Now for $[\lambda]\notin \XX$, any module $V$ of $\catn_{[\lambda]}$ is semisimple as a
  $U_{ξ}^{[\lambda]}(\g)$-module ; in particular, it has dimension $dD$ for some
  $d\in\N$ and the space of its highest weight vectors
  $W=\bigcap_{i=1}^{\rk}\ker \rho_V(E_i)$ has dimension $d$.  Now $W$ is
  $\Z/2$-graded because each kernel is
  ($\ker \rho_V(E_i)=\ker \rho_V(E_i)_{|V_{\p0}}\oplus\ker
  \rho_V(E_i)_{|V_{\p1}}$) and $W$ is stable for the action of the
  $H_i$. This implies that $W$ has a homogeneous basis $(v_j)_j$ formed
  by weight vectors.  For such a vector, $\UH.v_j=U_{ξ}^{[\lambda]}(\g).v_j$ is
  a simple module of $\catn_{[\lambda]}$ and the map
  $$\bigoplus_j\UH.v_j\to V$$
  is an isomorphism of $\UH$-modules because it is an
  isomorphism of $U_{ξ}^{[\lambda]}(\g)$-modules.  Hence $V$ is semi-simple.
\end{proof}

\begin{remark}
  For generic $\lambda\in\h^*$ as in the above proof, the simple
  module with highest weight $\lambda$ has dimension $D$ and lowest
  weight $\lambda-(\ell-1)(2\rho_{\p0})-2\rho_{\p1}=\lambda+\pi$.  If such a
  module was self-dual, we would have $\lambda+\pi=-\lambda$ thus
  $\lambda=-\pi/2$. But then $\rho-\pi/2=\ell\rho_{\p0}$ and for any odd
  root $\alpha\in\Delta_{\wb1}^+$, $\qn{\brk{-\pi/2+\rho,\alpha}}_ξ=0$
  contradicting Equation \eqref{eq:typ}.  Hence there is no analog of
  the Steinberg module (a simple projective self-dual module) in
  $\catn$.
\end{remark}
\subsection{Ribbon Structure}

\begin{definition}
  Let us consider $θ_V$ to be the right partial trace of the braiding $c_{V,V}$, given by the formula:
$$  
  θ_V=(\Id\otimes\tev_V)(c_{V,V}\otimes\Id)(\Id\otimes\coev_V).
$$  
 The category  $\catn$ is \emph{ribbon} and the family $θ$ is a \emph{twist} if 
 $$
 θ_{(V^*)}=(θ_V)^*, \forall  \ V \in \catn.
 $$
\end{definition}
Next, we show  that $θ$ is a twist.  This will be done first for the case of generic simple modules and it will follow afterwards for all modules, as a consequence of the generic semi-simplicity.
\begin{proposition}\label{P:twist-simple} Let $V$ be a highest
  weight module with highest weight $\lambda$. Then we have the following property:
  \begin{equation}\label{E:CompTwist}
  θ_V=\xi^{\brk{\lambda+\pi,\lambda}}\Id_{V}.
  \end{equation}
   In particular, if $\lambda$
  is generic then $(θ_V)^*=θ_{(V^*)}$.
\end{proposition}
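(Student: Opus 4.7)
The strategy is to compute $\theta_V$ explicitly on the highest weight vector $v_0\in V$ and then propagate to all of $V$ by naturality. Because $\theta_V$ is built from the braiding and the (co)evaluation, it lies in $\End_{\catn}(V)$ and therefore commutes with the $\UH$-action. Since $V$ is assumed to be generated by $v_0$, the equality $\theta_V(v_0)=\xi^{\brk{\lambda+\pi,\lambda}}v_0$ will force $\theta_V=\xi^{\brk{\lambda+\pi,\lambda}}\Id_V$.

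The key observation for computing $\theta_V(v_0)$ is that the non-Cartan factor $\check{\Rm}^\xi$ of the $R$-matrix is a product of truncated exponentials in $E_\alpha\otimes F_\alpha$. Since $E_\alpha v_0=0$ for every $\alpha\in\roots^+$, only the constant term survives and $\check{\Rm}^\xi(v_0\otimes w)=v_0\otimes w$ for every weight vector $w$. Combined with Equation \eqref{E:ActionHR}, this gives
\[
\Rm^\xi(v_0\otimes w)=\xi^{\brk{\lambda,\mu_w}}v_0\otimes w,\qquad
c_{V,V}(v_0\otimes w)=(-1)^{\p{v_0}\p w}\xi^{\brk{\lambda,\mu_w}}w\otimes v_0,
\]
where $\mu_w$ is the weight of $w$. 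Choosing a weight basis $\{v_i\}$ of $V$ with $v_0=v_{i_0}$, unpacking the definition of $\theta_V$ yields
\[
\theta_V(v_0)=\sum_i(-1)^{\p{v_0}\p{v_i}}\xi^{\brk{\lambda,\mu_i}}v_i\otimes\tev_V(v_0\otimes v_i^{*})
=\sum_i(-1)^{\p{v_0}\p{v_i}+\p{v_i}}\xi^{\brk{\lambda,\mu_i}}v_i^{*}(K_\pi\cdot v_0)\,v_i.
\]
Only the $i=i_0$ term contributes (since $v_i^{*}(v_0)=\delta_{i,i_0}$), and on $v_0$ one has $K_\pi\cdot v_0=\xi^{\brk{\pi,\lambda}}v_0$; the two explicit signs collapse to $(-1)^{2\p{v_0}}=1$, leaving precisely $\theta_V(v_0)=\xi^{\brk{\lambda+\pi,\lambda}}v_0$, which proves \eqref{E:CompTwist}.

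For the last assertion, suppose $\lambda$ is generic in the sense of the preceding subsection, so that $V$ is simple of dimension $D$. The remark following Theorem \ref{T:SScatn} identifies the lowest weight of $V$ as $\lambda+\pi$, so $V^{*}$ is a simple highest weight module of highest weight $-(\lambda+\pi)$. Applying the formula \eqref{E:CompTwist} to $V^{*}$ and using symmetry of the bilinear form gives
\[
\theta_{V^{*}}=\xi^{\brk{-(\lambda+\pi)+\pi,\,-(\lambda+\pi)}}\Id_{V^{*}}
=\xi^{\brk{\lambda,\lambda+\pi}}\Id_{V^{*}}
=\xi^{\brk{\lambda+\pi,\lambda}}\Id_{V^{*}}=(\theta_V)^{*}.
\]

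The main technical obstacle is bookkeeping in the super setting: one must verify carefully that the parity signs coming from the super-flip $\tau$, from the $(-1)^{\p{v_i}}$ in the definition of $\tev_V$, and from the convention on the dual action conspire to cancel. All other ingredients---the annihilation of $v_0$ by raising operators, the diagonal action of $\mathcal{H}^\xi$ from Proposition \ref{P:HH}, and the $\UH$-linearity of $\theta_V$---are immediate from the constructions in the preceding subsections.
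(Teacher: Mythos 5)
Your proof is correct and follows essentially the same route as the paper's: the key observation in both is that $\check{\Rm}^{\xi}$ acts as the identity on $v_0\otimes w$ because the raising operators annihilate the highest weight vector, so only the Cartan part $\HR^{\xi}$ and the pivot $K_\pi$ contribute, and the dual statement follows in both from identifying the highest weight of $V^{*}$ as $-\pi-\lambda$ via the lowest weight $\lambda+\pi$. You merely carry out explicitly the sign bookkeeping that the paper delegates to a ``direct computation'' with a citation to \cite{GP2} and \cite{GP13}.
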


\begin{proof}
By definition of highest and lowest weight vectors, it follows that when computing the braiding of a highest weight vector tensor any vector (or any vector tensor a lowest weight vector) the only part that contributes is $\HR^{ξ}$.  Previously, this observation was used to compute the open Hopf link in Proposition 2.2 of \cite{GP2} and Proposition 45 of \cite{GP13}.  Thus, a direct computation gives Equation \eqref{E:CompTwist}.

  For generic modules, the lowest weight vector is the image of the
  highest weight vector by the lowest weight element of $\UH$,
  i.e. the product of the generators
  $\prod_{\alpha\in\roots_{\p1}^+}F_\alpha\prod_{\beta\in\roots_{\p0}^+}F_\beta^{\ell-1}$.
  Hence the lowest weight vector is equal to the highest weight vector
  plus $\pi=-2(\ell-1)\rho_0-2\rho_1$
 and $V^*$ is a generic simple
  module with highest weight $-\pi-\lambda$.  Then we have
$$θ_{(V^*)}=\xi^{\brk{-\pi-\lambda+\pi,-\pi-\lambda}}\Id_{(V^*)}
  =\xi^{\brk{\lambda,\pi+\lambda}}\Id_{(V^*)}=(θ_V)^*.$$
\end{proof}

\begin{corollary}\label{C:D-ribbon}
  The category $\catn$ together with the twist $θ$ is a ribbon category.
\end{corollary}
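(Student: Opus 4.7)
The braiding has been established in Corollary \ref{C:BraidedCat} and the pivotal structure is constructed above. What remains is to check the ribbon axiom $\theta_{V^*} = (\theta_V)^*$ for every $V \in \catn$; naturality of $\theta$ and its compatibility with the braiding follow automatically from the definition of $\theta$ through the braiding and the pivotal evaluation and coevaluation.

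By Proposition \ref{P:twist-simple} the identity holds on every simple highest weight module with generic (typical) highest weight. Fix $[\lambda] \in \Gr \setminus \XX$: Theorem \ref{T:SScatn} furnishes a completely reduced dominating set for $\catn_{[\lambda]}$ consisting of objects $V_i \otimes \sigma(z)$ with $V_i \in \Theta_{[\lambda]}$ a generic typical simple highest weight module and $\sigma(z)$ one-dimensional, hence itself a highest weight module with $\sigma(z)^* \simeq \sigma(-z)$. Applying the explicit scalar formula of Proposition \ref{P:twist-simple} to both $V_i$ and $\sigma(z)$ gives the ribbon identity on each simple of $\catn_{[\lambda]}$, and since $\theta$ and the duality functor are both additive on direct sums and behave multiplicatively on tensor products with invertible objects, the identity extends to every object of $\catn_{[\lambda]}$.

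It remains to extend the ribbon identity from the generic locus $\Gr \setminus \XX$ to the whole category. I would proceed by a density/continuity argument: given $V \in \catn_{[\mu]}$ with $[\mu] \in \XX$, construct a one-parameter algebraic family $\{V_t\}_{t \in \C}$ of weight modules such that $V_{t_0} = V$ and the degree of $V_t$ lies outside the small set $\XX$ for $t$ in a dense open subset of $\C$. The endomorphism $\theta_{V_t^*} - (\theta_{V_t})^*$ is a polynomial function of $t$ taking values in $\End_\catn(V_t^*)$, vanishes on the dense open set by the previous step, and therefore vanishes identically; evaluating at $t=t_0$ yields the ribbon identity for $V$. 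The main obstacle is constructing such a family entirely within $\catn$, since admissible one-dimensional deformations are constrained by the relation $K_i = \xi^{d_iH_i}$; this may require working in a mildly enlarged category of weight modules in which $K_i$ and $H_i$ are treated as independent, and specializing at the end of the argument. Alternatively, one could bypass this obstacle by a direct computation on weight vectors of $V$ and $V^*$ using the truncated $R$-matrix of Lemma \ref{L:RmOperator} and the explicit pivot $K_\pi$, reducing the identity to the weight-level equality $\langle \lambda+\pi,\lambda\rangle \equiv \langle -\mu+\pi,-\mu\rangle \pmod{\ell}$ relating highest and lowest weights.
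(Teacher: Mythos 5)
Your first two steps match the paper: Proposition \ref{P:twist-simple} gives $(\theta_V)^*=\theta_{V^*}$ on generic simple highest weight modules, and semisimplicity of $\catn_{[\lambda]}$ for $[\lambda]\notin\XX$ extends this across the generic graded pieces. The paper then finishes in one line by invoking \cite[Theorem 9]{GP18}, which is exactly the statement that a braided pivotal, generically semisimple category whose candidate twist satisfies the duality identity on generic simples is ribbon.

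The gap is in your final step, and you have essentially flagged it yourself. The deformation argument is not available as stated: an arbitrary object of $\catn_{[\mu]}$ with $[\mu]\in\XX$ is not a highest weight module and does not sit in any natural flat family $\{V_t\}$ inside $\catn$; moreover, since the underlying module $V_t^*$ varies with $t$, the expression ``$\theta_{V_t^*}-(\theta_{V_t})^*$ is a polynomial function of $t$ valued in $\End_\catn(V_t^*)$'' is not well defined without first constructing such a family, which is the whole difficulty. Your fallback of a direct weight-level computation reducing to $\brk{\lambda+\pi,\lambda}\equiv\brk{-\mu+\pi,-\mu}$ only makes sense when $\theta_V$ acts as a scalar, i.e.\ for simple (highest weight) modules; it says nothing about indecomposable non-simple modules in degrees belonging to $\XX$, which is precisely the non-semisimple locus one must handle. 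The standard repair — and the content of \cite[Theorem 9]{GP18} — is an absorption argument rather than a deformation: for any $W\in\catn$ choose a generic simple $V$ so that $W\otimes V$ lies in a generic degree and is therefore a direct sum of generic simples; the duality identity holds for $W\otimes V$, and the compatibility $\theta_{W\otimes V}=(\theta_W\otimes\theta_V)\circ c_{V,W}\circ c_{W,V}$ together with invertibility of the braiding and of $\theta_V$ lets one extract the identity for $\theta_W$. Replacing your last paragraph by this tensoring trick (or by the citation) closes the proof.
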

\begin{proof}
  By Proposition \ref{P:twist-simple} we have that
  $(θ_V)^*=θ_{(V^*)}$
  for generic simple
  module of $\catn$ thus by \cite[Theorem 9]{GP18}, $θ$ is a twist on
  $\catn$.
\end{proof}

  \subsection{Translation group and free realisation}
Recall the weight lattice $\Lambda_W$, generated by fundamental dominant weights $w_i$ which are determined by $ \brk{w_i,\alpha_j}=d_i\delta_{i,j}$.  Since $d_i=\pm1$, for all $i$,  are invertible in $\Z$ then $\Lambda_W=\qn{\lambda\in\h^*:\,\brk{\lambda,\Lambda_R}\subset \Z}$.   
\begin{lemma} \label{L:LambdaZ0} We have  
  \begin{enumerate}
  \item $\Lambda_\Zt^0=(\ell\Lambda_W)\cap\Lambda_R$,
  \item $\theta_{\sigma{(\bar z,\lambda)}}=\Id_{\sigma{(\bar z,\lambda)}}$, for all $ λ\in\Lambda_\Zt^0$.
  \end{enumerate}
\end{lemma}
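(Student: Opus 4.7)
The plan is to handle the two parts in sequence, with part (2) reducing to part (1) via Proposition \ref{P:twist-simple}.

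For part (1), I will first verify the inclusion $(\ell\Lambda_W)\cap\Lambda_R\subseteq\Lambda_\Zt^0$: if $\lambda=\ell\mu$ with $\mu\in\Lambda_W$, then $\brk{\lambda,\alpha_i}=\ell\brk{\mu,\alpha_i}\in\ell\Z$, so certainly $2\brk{\lambda,\alpha_i}\in\ell\Z$. For the reverse inclusion, the key observation is that for $\lambda=\sum_j n_j\alpha_j\in\Lambda_R$, the pairings $\brk{\lambda,\alpha_i}=\sum_j n_jd_ja_{ji}$ lie in $\Z$ (because each $d_j\in\{\pm1\}$ and $a_{ji}\in\Z$), which shows $\Lambda_R\subseteq\Lambda_W$. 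Then given $\lambda\in\Lambda_\Zt^0$, the two integrality conditions $\brk{\lambda,\alpha_i}\in\Z$ and $2\brk{\lambda,\alpha_i}\in\ell\Z$ combine, using that $\ell$ is odd (so $\gcd(2,\ell)=1$), to yield $\brk{\lambda,\alpha_i}\in\ell\Z$ for every $i$. Hence $\lambda/\ell\in\Lambda_W$, i.e.\ $\lambda\in\ell\Lambda_W$.

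For part (2), the module $\sigma(\bar z,\lambda)$ is one-dimensional with $E_i$ acting by zero, so it is a highest weight module with highest weight $\lambda$. Proposition \ref{P:twist-simple} then gives
$$\theta_{\sigma(\bar z,\lambda)}=\xi^{\brk{\lambda+\pi,\lambda}}\Id_{\sigma(\bar z,\lambda)},$$
and it remains to check that the exponent lies in $\ell\Z$. Using part (1), write $\lambda=\ell\mu$ with $\mu\in\Lambda_W$, so that
$$\brk{\lambda+\pi,\lambda}=\ell^2\brk{\mu,\mu}+\ell\brk{\pi,\mu}.$$
The second summand lies in $\ell\Z$ because $\pi\in\Lambda_R$ pairs integrally with $\mu\in\Lambda_W$. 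For the first summand, the identity $\ell\brk{\mu,\mu}=\brk{\ell\mu,\mu}=\brk{\lambda,\mu}\in\Z$ (using $\lambda\in\Lambda_R$ and $\mu\in\Lambda_W$) yields $\ell^2\brk{\mu,\mu}\in\ell\Z$. Adding the two contributions, $\brk{\lambda+\pi,\lambda}\in\ell\Z$ and the twist scalar $\xi^{\brk{\lambda+\pi,\lambda}}=1$.

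No serious obstacle appears; the only delicate points are the parity argument justifying $\Lambda_R\subseteq\Lambda_W$ (essential for the reverse inclusion in part (1)) and the oddness of $\ell$ which makes $2$ invertible modulo $\ell$. Both parts are then essentially lattice-theoretic manipulations chained to the already-established twist formula.
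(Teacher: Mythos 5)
Your proof is correct and takes essentially the same approach as the paper: part (1) is the identical lattice argument, resting on $\brk{\Lambda_R,\alpha_i}\subset\Z$, the oddness of $\ell$, and the characterization $\ell\Lambda_W=\{\lambda\in\h^*:\brk{\lambda,\Lambda_R}\subset\ell\Z\}$. For part (2) the paper recomputes the twist on the one-dimensional module directly from $\HR^{ξ}$ and $K_\pi$ instead of invoking Proposition \ref{P:twist-simple}, but this produces the same scalar $\xi^{\brk{\lambda+\pi,\lambda}}$, and your check that the exponent lies in $\ell\Z$ is the same computation the paper leaves implicit.
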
 
\begin{proof}
 If $\lambda \in \Lambda_R$ then $\lambda=\sum_j n_j\alpha_j$ for $n_j\in \Z$.  It follows that $\brk{\lambda,\alpha_i}\in \Z$ for all $i$.  Thus, since $\ell$ is odd then $\brk{\lambda,\alpha_i}\in \ell\Z$ if and only if $2\brk{\lambda,\alpha_i}\in \ell\Z$.   From our characterization of $\Lambda_W$, presented above, we have: 
 \begin{equation}\label{E:lLambdaW}
 \ell \Lambda_W=\{\lambda \in \h^*:\,\brk{\lambda,\Lambda_R}\subset \ell\Z\}.
 \end{equation}
 Then, the first part of this lemma follows from the last two sentences.
 
 Let $z=(\bar z,\lambda)\in \Zt$ and let $v$ be a generating vector of $\sigma(z)$. Since the root vectors acts trivially on $\sigma(z)$, the only part from the action
of ${\Rm}^{ξ}=\check{{\Rm}}^{ξ}\HR^{ξ}$ contributing to the braiding $c_{\sigma(z),\sigma(z)}$ is $\HR^{ξ}$.  Combining this with the definition of the duality and Equation \eqref{E:ActionHR} we have  
$$\theta_{\sigma{(\bar z,\lambda)}}(v)=ξ^{\brk{\lambda,\lambda}}K_\pi(v)=ξ^{\brk{\lambda,\lambda}+\brk{\pi,\lambda}}v.$$
Then the first property of the lemma combined with Equation \eqref{E:lLambdaW} implies that 
  $\brk{\lambda+\pi,\lambda}\in \ell\Z$ and the second property of the lemma follows.
\end{proof}

\begin{proposition}
  For any $z,z'\in\Zt$ one has $\sigma(z)\in\catd_0$
  $$\sigma(z)\otimes \sigma({z'})=\sigma({z+z'}),\quad\dim_\catn(\sigma(z))=(-1)^{\bar z},\quad θ_{\sigma(z)}=\Id_{\sigma(z)}.$$
  Finally, $\{\sigma(z)\}_{z\in \Zt}$ is a free realization of $\Zt$ in $\catd$.
\end{proposition}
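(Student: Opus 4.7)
The plan is to run through the five axioms of a free realization together with the three auxiliary identities in the statement, using only the explicit one-dimensional description of $\sigma(z)$ and the lattice identity in Lemma \ref{L:LambdaZ0}. Since $\sigma(z)$ is one-dimensional and annihilated by every $E_i, F_i$, the coproduct formulas show that $\sigma(z)\otimes\sigma(z')$ is one-dimensional, annihilated by every $E_i,F_i$, has $H_i$-eigenvalue $(\lambda+\lambda')(H_i)$ on any generator, and carries $\Z/2$-grading $\bar z + \bar z'$; this matches the construction of $\sigma(z+z')$, giving the tensor formula on the nose. Specialising at $z=z'=0$ identifies $\sigma(0)$ with the trivial module $\unit$. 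For the grading, Lemma \ref{L:LambdaZ0}(1) says $\lambda\in\Lambda_\Zt^0\subset\Lambda_R$, hence $[\lambda]=0$ in $\Gr=\h^*/\Lambda_R$, which means $\sigma(z)\in\catd_0$.

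For the quantum dimension, I would compute $\tev_{\sigma(z)}\circ \coev_{\sigma(z)}$ by using the pivot $K_\pi$. On $\sigma(z)$ the element $K_\pi$ acts by the scalar $\xi^{\brk{\pi,\lambda}}$ by Equation \eqref{eq:Ki}. By Lemma \ref{L:LambdaZ0}(1) we have $\lambda=\ell w$ for some $w\in\Lambda_W$, and since $\pi\in\Lambda_R$ the duality $\brk{\Lambda_R,\Lambda_W}\subset\Z$ gives $\brk{\pi,\lambda}=\ell\brk{\pi,w}\in\ell\Z$, so $K_\pi$ acts as the identity on $\sigma(z)$. Unwinding $\tev\circ\coev$ on the one-dimensional superspace with generator of parity $\bar z$ then produces exactly $(-1)^{\bar z}\in\{\pm 1\}$, giving both $\qd_\catn(\sigma(z))=(-1)^{\bar z}$ and the $\pm 1$ condition. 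The twist identity $θ_{\sigma(z)}=\Id_{\sigma(z)}$ is precisely Lemma \ref{L:LambdaZ0}(2) and so requires no new work.

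Finally, for the last axiom let $V$ be simple in $\catn$ with highest weight $\mu$ and highest-weight vector $v_0$. Since $\sigma(z)$ is invertible ($\sigma(z)\otimes\sigma(-z)\simeq\unit$), tensoring with $\sigma(z)$ is an autoequivalence, so $V\otimes\sigma(z)$ is again simple. Its highest weight is $\mu+\lambda$, achieved on the one-dimensional space spanned by $v_0\otimes v$ of parity $\p{v_0}+\bar z$. An isomorphism $V\otimes\sigma(z)\stackrel\sim\to V$ in $\catn$ must preserve highest weights (forcing $\lambda=0$) and the parity of the highest-weight line (forcing $\bar z=0$ because $\End_\catn(V)=\C\Id_V$ excludes an odd equivariant isomorphism between $V$ and $\Pi V$). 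The converse $V\otimes\sigma(0)\simeq V$ is immediate from $\sigma(0)=\unit$, completing the proof.

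There is no single hard obstacle here; the proposition is an assembly of verifications. The only non-formal step is the dimension computation, where one must feed the characterisation $\Lambda_\Zt^0=(\ell\Lambda_W)\cap\Lambda_R$ from Lemma \ref{L:LambdaZ0}(1) into the lattice duality $\brk{\Lambda_R,\Lambda_W}\subset\Z$ to confirm that $K_\pi$ acts trivially on every $\sigma(z)$; everything else is bookkeeping against the explicit one-dimensional model and the pivotal structure already set up.
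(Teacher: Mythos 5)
The paper states this proposition without any proof at all, so there is nothing to compare against; your write-up supplies the missing verification, and it is correct. Every step you take is the one the surrounding text implicitly intends: the tensor/unit/grading claims from the explicit one-dimensional model and the coproduct, the dimension from $\str(K_\pi)$ on $\sigma(z)$ combined with $\Lambda_\Zt^0=(\ell\Lambda_W)\cap\Lambda_R$ and $\pi\in\Lambda_R$ (equivalently, Equation~\eqref{E:lLambdaW} gives $\brk{\pi,\lambda}\in\ell\Z$ directly), the twist from Lemma~\ref{L:LambdaZ0}(2), and axiom (5) from uniqueness of the highest weight.

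One small imprecision in the last step: the parenthetical ``because $\End_\catn(V)=\C\Id_V$ excludes an odd equivariant isomorphism'' does not quite do the job, since morphisms of $\catn$ are parity-preserving by definition, so $\End_\catn(V)$ contains no odd maps and being $\C\Id_V$ says nothing about them. The clean argument is the one you gesture at just before: the $\mu$-weight space of a simple highest weight module is the line $\C v_0$, which is homogeneous of a definite parity; a parity-preserving isomorphism $V\otimes\sigma(\bar z,0)\to V$ must carry the highest-weight line (of parity $\p{v_0}+\bar z$) onto $\C v_0$ (of parity $\p{v_0}$), forcing $\bar z=0$. Relatedly, axiom (5) quantifies over objects that are simple in the categorical sense ($\End_\catn(V)=\C\Id_V$), and you silently assume such a $V$ is an irreducible highest weight module; the paper is no more careful on this point, but it is worth being aware that this identification is being used.
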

\begin{lemma}[Compatibility]\label{P:compat}
There exists a bilinear map $\psi: \Gr \times \Zt \rightarrow \C^*$
  satisfying the compatibility property of Definition
  \ref{D:6}.
\end{lemma}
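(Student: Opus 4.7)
The plan is to compute the double braiding $c_{\sigma(z),V}\circ c_{V,\sigma(z)}$ explicitly and read off a formula for $\psi$, then verify bilinearity and well-definedness.

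First I would observe that the generators $E_i$ and $F_i$ act as $0$ on $\sigma(z)=\sigma{(\bar z,\mu)}$, so by induction on height in the normal ordering of $\roots^+$, the quantum root vectors $E_\alpha$ and $F_\alpha$ also act as $0$ on $\sigma(z)$ for every $\alpha\in\roots^+$. Consequently, in the expansion
$$\check{\Rm}^ξ=\prod_{\alpha\in\roots^+}\exp_{ξ_\alpha}^<\bigl(c_\alpha(ξ-ξ^{-1})(E_\alpha\otimes F_\alpha)\bigr),$$
every term except the leading $1\otimes1$ kills any vector in $V\otimes\sigma(z)$ (and similarly in $\sigma(z)\otimes V$). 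Therefore ${\Rm}^ξ=\check{\Rm}^ξ\HR^ξ$ reduces to $\HR^ξ$ on these tensor products.

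Next I would use Equation \eqref{E:ActionHR}: for a weight vector $v\in V$ of weight $\lambda_v$ and the generating vector $v'\in\sigma(z)$ of weight $\mu$, one has $\HR^ξ(v\otimes v')=ξ^{\brk{\lambda_v,\mu}}v\otimes v'$ and $\HR^ξ(v'\otimes v)=ξ^{\brk{\mu,\lambda_v}}v'\otimes v$. Composing the two braidings $c_{V,\sigma(z)}=\tau\circ\HR^ξ$ and $c_{\sigma(z),V}=\tau\circ\HR^ξ$, the two super-flips cancel (since $(-1)^{\bar v\bar z+\bar z\bar v}=1$), yielding
$$c_{\sigma(z),V}\circ c_{V,\sigma(z)}(v\otimes v')=ξ^{2\brk{\lambda_v,\mu}}\,v\otimes v'.$$

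Now I must check this scalar depends only on $g=[\lambda]\in\Gr$ and $z=(\bar z,\mu)\in\Zt$, not on the particular weight vector $v$ nor the representative $\lambda$. Writing $\lambda_v=\lambda+\beta$ with $\beta\in\Lambda_R$, we have $2\brk{\lambda_v,\mu}=2\brk{\lambda,\mu}+2\brk{\beta,\mu}$; since $\mu\in\Lambda_\Zt^0\subset\Lambda_\Zt$ satisfies $2\brk{\mu,\alpha_i}\in\ell\Z$ for every simple root and $\beta$ is an integral combination of simple roots, the extra term lies in $\ell\Z$, so $ξ^{2\brk{\beta,\mu}}=1$. The same argument shows invariance under replacing $\lambda$ by $\lambda+\Lambda_R$.

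Finally I would define
$$\psi\colon\Gr\times\Zt\to\C^*,\qquad \psi\bigl([\lambda],(\bar z,\mu)\bigr)=ξ^{2\brk{\lambda,\mu}}.$$
Bilinearity is immediate from bilinearity of $\brk{\cdot,\cdot}$ on $\h^*$, the values lie in $\C^*$ since $ξ\in\C^*$, and the compatibility identity of Definition \ref{D:6} is exactly the computation above. There is no real obstacle here; the only point that requires care is the well-definedness modulo $\Lambda_R$ and $\Lambda_\Zt^0$, which is ensured precisely by the defining condition $2\brk{\Lambda_\Zt^0,\Lambda_R}\subset\ell\Z$.
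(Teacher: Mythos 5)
Your proof is correct and follows essentially the same route as the paper: reduce the double braiding on $V\otimes\sigma(z)$ to the Cartan part $\HR^ξ$ (since the root vectors annihilate $\sigma(z)$), compute the scalar $ξ^{2\brk{\lambda_v,\mu}}$ via Equation \eqref{E:ActionHR}, and check independence of the choices using that $\mu\in\Lambda_\Zt^0$ pairs into $\ell\Z$ with the root lattice. Your normalization $\psi=ξ^{2\brk{\lambda,\mu}}$ in fact matches the compatibility identity more directly than the paper's stated $ξ^{\brk{\mu,\lambda}}$, which omits the factor of $2$ coming from the double braiding.
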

\begin{proof}
Let us fix $[\mu] \in \Gr=\h^*/\Lambda_R$ and $z=(\bar z,\lambda)\in \Zt=\Z_2 \times \Lambda_\Zt^0$.  Let $\mu\in \h^*$ be a representative of $[\mu]$.  Let $V\in \catn_{[\mu]}$ and let $v \in V$ and $s \in \sigma(z)$ be two vectors.

As in the last lemma, since the root vectors act trivially on $\sigma(z)$, the only part from the action
of ${\Rm}^{ξ}=\check{{\Rm}}^{ξ}\HR^{ξ}$ contributing to the braiding $c_{\sigma(z),V}$ (or $ c_{V,\sigma(z)}$) is $\HR^{ξ}$.
Then Equation \eqref{E:ActionHR} implies:
\begin{equation}\label{eq:5}  
c_{\sigma(z),V}\circ c_{V,\sigma(z)}(v \otimes s)=ξ^{2{\brk{w(v),w(s)}}}v \otimes s
\end{equation}
where $w(v)$ (resp. $w(s)$) is the weight of the weight vector $v$ (resp. $s$).  Here, by the definition of $\catn_{\mu}$ we have that $w(v)$ is of the form $\mu+\alpha$ for some $\alpha \in\Lambda_R$.
Combining this fact 
and Property (1) of Lemma \ref{L:LambdaZ0} we have that 
\begin{equation}\label{eq:6}
\xi^{\brk{w(v),w(s)}}=\xi^{\brk{\mu,\lambda}}
\end{equation}
is a well defined scalar independent of choice of the vector $v$ in $ V$ and the representative $\mu$ of $[\mu]$.    Thus, we can consider the function  $$\psi([\mu],z):=\xi^{\brk{\mu,\lambda}}.$$
From Equations \eqref{eq:5} and \eqref{eq:6}, we conclude that the function $\psi$ satisfies the compatibility condition, which concludes the proof.
\end{proof}

\subsection{The unimodular m-trace of $\catn$}
From \cite[Corollary 5.6]{GKP3} we know that as $\catn$ is a
locally-finite pivotal $\C$-tensor category which has enough
projectives, it has a unique non trivial m-trace on its ideal of
projective objects if and only if $\catn$ is unimodular.  We use this to prove the following theorem:
\begin{theorem}\label{T:ExTraceCatn}
  There is an unique (up to a global scalar) trace on the ideal of
  projective modules of $\catn$.
\end{theorem}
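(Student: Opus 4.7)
The plan is to invoke the cited \cite[Corollary 5.6]{GKP3}: for a locally-finite pivotal $\C$-tensor category with enough projectives, there is a unique (up to scalar) non-trivial m-trace on the ideal of projective objects if and only if the category is \emph{unimodular}, meaning the projective cover $P(\unit)$ of the monoidal unit satisfies $P(\unit)^{*}\cong P(\unit)$. I would first verify that the hypotheses are met by $\catn$, then prove unimodularity.

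For the hypotheses: $\catn$ is $\C$-linear and pivotal by the duality constructed earlier, and it is locally finite because its objects are finite-dimensional weight supermodules. Enough projectives can be extracted from the analysis in the proof of Theorem \ref{T:SScatn}, where objects of each $\catn_{[\lambda]}$ are identified with modules over the finite-dimensional Hopf superalgebra $U_{ξ}^{[\lambda]}(\g)$ enriched by a compatible diagonal action of the Cartan generators $H_i$. Finite-dimensional Hopf superalgebras have enough projectives, and lifting this structure through the $H_i$-action is routine.

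For unimodularity, it suffices to work in the component $\catn_0$ containing $\unit$ and show $P(\unit)^{*}\cong P(\unit)$. I would reduce this to unimodularity of the finite-dimensional Hopf superalgebra $U_{ξ}^{[0]}(\g)$, i.e., coincidence (up to scalar) of left and right cointegrals, or equivalently triviality of the distinguished grouplike $g$. Using the super version of Radford's $S^4$-formula together with the identity $S^2(u) = K_\pi u K_\pi^{-1}$ already established, the distinguished grouplike is controlled by $K_\pi^2$ together with the parity data of the odd roots. The oddness of $\ell$ combined with the explicit form $\pi = 2(1-\ell)\rho_{\p0} - 2\rho_{\p1}$ forces the relevant character on the truncated Cartan part to trivialize, yielding $g = 1$ and hence $P(\unit)^{*}\cong P(\unit)$.

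The main obstacle will be the careful bookkeeping of parity contributions from the odd positive roots $\roots_{\p1}^{+}$ when constructing cointegrals of $U_{ξ}^{[0]}(\g)$: the super-flip signs and the asymmetry between $\rho_{\p0}$ and $\rho_{\p1}$ in the pivot must be tracked precisely, and one must check that the characters coming from the sum over the finite Cartan part really cancel after specialization at $ξ=\exp(2i\pi/\ell)$ with $\ell$ odd. This is the step where the super case genuinely departs from the classical unrolled quantum group at an odd root of unity, and it is the central computation required to conclude.
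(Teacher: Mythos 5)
Your reduction is the same as the paper's: both invoke \cite[Corollary 5.6]{GKP3} to convert existence and uniqueness of the m-trace on projectives into unimodularity of $\catn$, and your verification of the hypotheses (local finiteness, enough projectives via the algebras $U_{ξ}^{[\lambda]}(\g)$) is in the spirit of what the paper leaves implicit. The divergence — and the gap — is in the proof of unimodularity itself. Your plan is Hopf-algebraic: pass to the finite-dimensional superalgebra $U_{ξ}^{[0]}(\g)$ and show its distinguished grouplike character is trivial. But the step you offer for this, namely Radford's $S^4$-formula combined with $S^2(u)=K_\pi u K_\pi^{-1}$, cannot close the argument on its own: Radford's formula expresses $S^4$ in terms of \emph{both} the distinguished grouplike $g$ of the algebra and the distinguished character $\alpha$ of its dual, so knowing $S^4=\mathrm{ad}(K_\pi^2)$ yields one relation constraining the pair $(g,\alpha)$, not the value of $\alpha$ (which is what unimodularity requires). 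To determine $\alpha$ you must actually exhibit a (co)integral of $U_{ξ}^{[0]}(\g)$ — the very computation you defer to as ``the main obstacle.'' There is also an unaddressed mismatch between $\catn_0$ and $U_{ξ}^{[0]}(\g)$-mod: objects of $\catn_0$ carry the extra diagonal $H_i$-action, which is only determined modulo $\ell$ by the $K_i$, so transporting unimodularity from the finite-dimensional quotient back to $\catn$ needs its own (short but nonzero) argument.

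For comparison, the paper avoids integrals entirely. It takes a typical (hence projective) module $V$ with extremal vectors $v_\pm$, decomposes $V\otimes V^*\simeq\bigoplus_i P_i$ into indecomposable projectives, and notes that the injective envelope $P_0$ of $\unit$ occurs exactly once since $\Hom_\catn(\C,V\otimes V^*)\simeq\End_\catn(V)=\C$. The vectors $v_+\otimes v_+$ and $v_-\otimes v_-$ each sit alone in their weight spaces, hence lie in single summands $P_i$ and $P_j$ with $P_i^*\simeq P_j$ (by self-duality of $V\otimes V^*$ and the weight-negation of duals); applying $x_\mp=\prod_{\alpha\in\roots_{\p1}^+}X_\alpha\prod_{\alpha\in\roots_{\p0}^+}X_\alpha^{\ell-1}$ to these vectors produces nonzero invariant vectors in both summands, forcing $P_i=P_0=P_j$ and hence $P_0^*\simeq P_0$. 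If you want to complete your own route, the analogous work is to write down the cointegral of $U_{ξ}^{[0]}(\g)$ explicitly (the product of all $F_\alpha^{\ell-1}$ over even roots and $F_\alpha$ over odd roots, times the analogous $E$-part and the sum over the grouplikes) and verify directly that left and right (co)integrals coincide; until that is done the proposal is incomplete.
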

\begin{proof}
  We need to show that $\catn$ is unimodular.  In other words, we need to show that the projective cover of the trivial module is
  isomorphic to its dual or equivalently, the injective envelope of
  the trivial module is self dual.  We sketch the proof that follows
  the line of \cite{GP13} where a similar result was proved for all (non-super) simple Lie algebras.  Let $V$ be a typical thus projective module.
 Let  $\lambda$ (resp.\ $\lambda+\pi$) be the highest weight (resp.\ lowest weight) of $V$ with associated highest weight vector $v_+$ (resp.\ lowest weight vector $v_-$).
  Let
  $x_+=\prod_{\alpha\in\roots_1^+}E_\alpha\prod_{\alpha\in\roots_0^+}E_\alpha^{\ell-1}$
  and $x_-=\prod_{\alpha\in\roots_1^+}F_\alpha\prod_{\alpha\in\roots_0^+}F_\alpha^{\ell-1}$.
   Then $x_-v_+\in\C^*v_-$ and
  $x_+v_-\in\C^*v_+$.
  
   The module $V\otimes V^*$ has a
  splitting in indecomposable projective modules:
  $$V\otimes V^*\simeq\bigoplus_i P_i.$$
  Among the $P_i$, the injective envelope of the trivial module
  appears exactly once (because
  $\Hom_\catn(\C,V\otimes V^*)\simeq\End_\catn(V)=\C$). Let us say it
  is $P_0$.  Thus, $P_0$ is the unique module among the $P_i$ that
  contains an invariant vector.  Now the vector $v_+\otimes v_+$ is
  alone in his weight space so it is contained in one of the $P_i$ for
  some $i$.  Similarly, $v_-\otimes v_-\in P_j$.  As
  $(V\otimes V^*)^*\simeq V\otimes V^*$ and the dual of a module has
  opposite weights, it comes $P_i^*\simeq P_j$.  Now the proof follows
  from the fact that $x_+(v_-\otimes v_-)$ and $x_-(v_+\otimes v_+)$
  are both non zero invariant vectors showing that $P_i=P_0=P_j$.
\end{proof}
For $V,V'\in\catn$ with $V'$ simple, we denote by $S'(V,V')$ the
scalar endomorphism which is given by the partial trace of the double
braiding of $V$ and $V'$:
\begin{equation}
  \label{eq:Sprime}
  \epsh{Sprime}{12ex}\put(-10,-10){\ms{V}}\put(-36,22){\ms{V'}}=S'(V,V')\Id_{V'}.
\end{equation}
Let $\Z[\h^*]$ be the group ring  whose basis $\{\qN\beta\}_{\beta\in\h^*}$ is indexed by the elements of the group $(\h^*,+)$.   For each $\alpha\in\h^*$ consider the ring morphism 
$$
\begin{array}{rcl}
  \varphi_\alpha:\Z[\h^*]&\to&\C \\
  \qN\beta&\mapsto& \xi^{\brk{\alpha,\beta}}.
\end{array}
$$
Any weight module $V$ has a super character $\sch(V)=\sum_\beta d_\beta\qN\beta\in\Z[\h^*]$ where $d_\beta$ is the super dimension of the $\beta$-weight space of $V$.
\begin{lemma}\label{L:Sprime}
  Recall, $\pi=-2(\ell-1)\rho_{\p0}-2\rho_{\p1}$.  Let $V,V'\in\catn$
  with $V'$ simple with highest weight $\lambda$ then
  $$S'(V,V')=\varphi_{2\lambda+\pi}(\sch(V)).$$
  In particular, if $V$ is a simple module of highest weight $\mu$
  which is typical with dimension $D$, then
  $$S'(V,V')=ξ^{2\brk{\lambda+\frac\pi2,\mu+\frac\pi2}}
  \prod_{\alpha\in\roots_{\p0}^+} \dfrac
  {\qn{\ell\brk{\lambda+\frac\pi2,\alpha}}{_ξ}}{\qn{\brk{\lambda+\frac\pi2,\alpha}}{_ξ}}
  \prod_{\alpha\in\roots_{\p1}^+}\qn{{\brk{\lambda+\frac\pi2,\alpha}}}{_ξ}. $$
\end{lemma}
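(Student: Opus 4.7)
The plan is to first establish the general character identity and then substitute the explicit Kac-type character formula in the typical case.

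\textbf{Stage 1: the character identity.}  Since $V'$ is simple of highest weight $\lambda$, the scalar $S'(V,V')$ equals the value on the highest weight vector $v_\lambda$ of the partial quantum trace $\ptr_V(\Rm^\xi_{21}\Rm^\xi_{12})$.  Write $\Rm^\xi=\check\Rm^\xi\HR^\xi$ and expand both $\check\Rm^\xi$'s in PBW monomials in $E_\alpha\otimes F_\alpha$ (for $\Rm^\xi_{12}$) and $F_\alpha\otimes E_\alpha$ (for $\Rm^\xi_{21}$).  Since the pivoted partial trace over $V$ is computed in a weight basis weighted by the diagonal pivot $K_\pi$, it annihilates any operator component that shifts the weight of $V$; so on $v_\beta\otimes v_\lambda$ it extracts only the weight-preserving part of $\check\Rm^\xi_{21}\check\Rm^\xi_{12}$.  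By the standard open Hopf link calculation in a quasi-triangular Hopf superalgebra, exploiting $E_\alpha\cdot v_\lambda=0$ and the commutator identity $[E_\alpha,F_\alpha]\propto K_\alpha-K_\alpha^{-1}$, this weight-preserving component reduces to the identity, so $\Rm^\xi_{21}\Rm^\xi_{12}$ acts on $v_\beta\otimes v_\lambda$ as $\HR^\xi\HR^\xi$, which by Equation \eqref{E:ActionHR} is multiplication by $\xi^{2\brk{\beta,\lambda}}$.  Including the pivot factor $\xi^{\brk{\beta,\pi}}$ from $K_\pi$ and summing the super-dimensions $d_\beta$ of the weight spaces of $V$ gives
\[
S'(V,V')=\sum_\beta d_\beta\,\xi^{\brk{\beta,2\lambda+\pi}}=\varphi_{2\lambda+\pi}(\sch(V)).
\]

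\textbf{Stage 2: the typical case.}  For $V$ simple typical of highest weight $\mu$, the PBW decomposition $V\simeq U_\xi(\g_{-1})\otimes V^{(0)}_\mu$ as $U_\xi(\g_{\p0})$-modules gives
\[
\sch(V)=\Bigl(\prod_{\alpha\in\roots_{\p1}^+}(1-[-\alpha])\Bigr)\operatorname{ch}(V^{(0)}_\mu),
\]
where the first factor is the super-character of the exterior algebra on the abelian odd nilpotent $\g_{-1}$ and $\operatorname{ch}(V^{(0)}_\mu)$ is the ordinary character of the simple $U_\xi(\g_{\p0})$-module of highest weight $\mu$.  Apply $\varphi_{2\lambda+\pi}$: the identity $1-\xi^{-\brk{2\lambda+\pi,\alpha}}=\xi^{-\brk{\lambda+\pi/2,\alpha}}\qn{\brk{\lambda+\pi/2,\alpha}}_\xi$ turns the odd product into $\prod_{\alpha\in\roots_{\p1}^+}\qn{\brk{\lambda+\pi/2,\alpha}}_\xi$ up to an exponential prefactor.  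Writing $\operatorname{ch}(V^{(0)}_\mu)$ by the Weyl character formula with even denominator $\prod_{\alpha\in\roots_{\p0}^+}(1-[-\alpha])$ and Weyl-sum numerator, the denominator contributes $\prod_{\alpha\in\roots_{\p0}^+}\qn{\brk{\lambda+\pi/2,\alpha}}_\xi$, while the numerator---via the root-of-unity identity $\qn{\ell z}_\xi=\qn{z}_\xi\sum_{k=0}^{\ell-1}\xi^{(\ell-1-2k)z}$ that packages the $W_0$-sum---collapses to $\prod_{\alpha\in\roots_{\p0}^+}\qn{\ell\brk{\lambda+\pi/2,\alpha}}_\xi$.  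Collecting all residual exponential prefactors, using $\pi=2\rho-2\ell\rho_{\p0}$ and Equation~\eqref{eq:rho}, recombines them into the symmetric prefactor $\xi^{2\brk{\lambda+\pi/2,\mu+\pi/2}}$.

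\textbf{Main obstacle.}  Stage~1's open Hopf link reduction is the delicate step: one must verify that the weight-preserving but non-identity PBW contributions to $\check\Rm^\xi_{21}\check\Rm^\xi_{12}$---i.e.\ matched pairs $(\vec m,\vec n)$ with $\sum m_\alpha\alpha=\sum n_\alpha\alpha\neq 0$---really do cancel on the $v_\beta\otimes v_\lambda$-component.  This rests on the carefully designed $q$-exponential coefficients of the $R$-matrix interacting correctly with the commutator action $[E_\alpha,F_\alpha]$ on $v_\lambda$, and requires attention to super signs and the parity of odd root vectors.  Stage~2 is then essentially algorithmic, with the only subtlety being the bookkeeping of $\rho_{\p0}$, $\rho_{\p1}$ and $\pi$-shifts to arrive at the manifestly symmetric final form.
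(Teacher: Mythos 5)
Your route is the same as the paper's in outline (open Hopf link evaluated on the highest weight vector of $V'$, then substitution of the explicit character of a dimension-$D$ typical module), but Stage 1 as you have set it up contains exactly the gap you flag at the end, and it is not a feature of the problem --- it is an artifact of the order in which you let the two copies of $\check{\Rm}^{\xi}$ act. Acting on $v_\beta\otimes v_\lambda$ with $V$ in the slot that receives the $E$'s of $\Rm^{\xi}_{12}$, the first thing that hits $v_\lambda$ is a monomial in the $F_\alpha$, which does not vanish; the weight-preserving part of $\check{\Rm}^{\xi}_{21}\check{\Rm}^{\xi}_{12}$ then contains matched terms such as $(F_\alpha E_\alpha)\otimes(E_\alpha F_\alpha)$, and both $E_\alpha F_\alpha v_\lambda=\frac{\qn{\brk{\alpha,\lambda}}_\xi}{\qn{d_\alpha}_\xi}v_\lambda$ and the pivoted trace of $F_\alpha E_\alpha$ on $V$ are generically nonzero. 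These contributions do sum to zero, but that cancellation is precisely what you do not prove. The standard fix, which is the content of the cited Proposition 2.2 of \cite{GP2} (and Proposition 45 of \cite{GP13}), is to read the diagram of Equation \eqref{eq:Sprime} as an endomorphism of $V'\otimes V$ with the $V$-strand closed on the right: then the first $\check{\Rm}^{\xi}$ applies $E$'s to the highest weight vector $v_\lambda$ of the simple module $V'$, so only its identity term survives, and the second $\check{\Rm}^{\xi}$ applies $E$'s to $V$, which strictly raise weight and are therefore annihilated by $\ptr_V$ since the pivot $K_\pi$ is diagonal. No commutator identity and no cancellation among PBW pairs is needed; only $\HR^{\xi}$ (twice) and $K_\pi$ contribute, giving $\sum_\beta d_\beta\,\xi^{\brk{2\lambda+\pi,\beta}}=\varphi_{2\lambda+\pi}(\sch(V))$ directly. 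The paper's proof is exactly this, imported from \cite{GP2} with the pivot $K_{2\rho}$ replaced by $K_\pi$.

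In Stage 2 your final formula is right but the justification of the even factor is not: at a root of unity the simple $U_\xi(\g_{\p 0})$-module in question is not a Weyl module and its character is not given by a Weyl-alternating numerator over $\prod_{\alpha\in\roots_{\p0}^+}(1-[-\alpha])$. What the hypothesis $\dim_\C V=D$ actually gives is that the PBW monomials of the restricted negative part applied to $v_\mu$ form a basis, whence $\sch(V)=[\mu]\prod_{\alpha\in\roots_{\p0}^+}\bp{1+[-\alpha]+\cdots+[(1-\ell)\alpha]}\prod_{\alpha\in\roots_{\p1}^+}(1-[-\alpha])$. Applying $\varphi_{2\lambda+\pi}$, the identity you use, $\qn{\ell z}_\xi/\qn{z}_\xi=\sum_{k=0}^{\ell-1}\xi^{(\ell-1-2k)z}$, is then the geometric sum over the powers $F_\alpha^k$, $0\le k\le\ell-1$, not a packaging of a $W_0$-sum. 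With that correction your bookkeeping of the prefactor via $\pi=-2(\ell-1)\rho_{\p0}-2\rho_{\p1}$ matches the paper's computation.
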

\begin{proof}
The proof will be analogous to the proof of Proposition 2.2 in \cite{GP2}.  In particular, one  only needs to compute the image of the highest weight vector of $V$ (properties of the $R$-matrix make this computation fairly straight forward).  The only difference between the computation in \cite{GP2} and the desired result here is that the two papers use different pivotal structures.  Taking this into account, the computations from Proposition 2.2 in \cite{GP2} imply the first equality from the lemma.  The second equality
follows from the fact that $\sch(V)$ is $[\mu]$ times the super character of the negative Borel, which gives
  $$\sch(V)=[\mu]\prod_{\alpha\in\roots_{\p0}^+}\bp{1+[-\alpha]
    +\cdots+[(1-\ell)\alpha]}\prod_{\alpha\in\roots_{\p1}^+}(1-[-\alpha])\in\Z[\h^*].$$
Then the formula is obtained from 
  $$S'(V,V')=ξ^{\brk{2\lambda+\pi,\mu}}
  \prod_{\alpha\in\roots_{\p0}^+} \dfrac
  {1-ξ^{-\ell\brk{2\lambda+\pi,\alpha}}}{1-ξ^{-\brk{2\lambda+\pi,\alpha}}}
  \prod_{\alpha\in\roots_{\p1}^+}\bp{1-ξ^{-\brk{2\lambda+\pi,\alpha}}}. $$
\end{proof}
\begin{corollary}\label{C:mtraceD}
  The m-trace on the ideal of projective modules can be
  normalized so that the modified dimension of the projective simple
  module $V^\lambda$ is given by
  $$\qd\bp{V^\lambda}=\dfrac
  {\prod_{\alpha\in\roots_{\p0}^+}\qn{\brk{\lambda+\frac\pi2,\alpha}}_ξ}
  {\prod_{\alpha\in\roots^+}\qn{{n_\alpha\brk{\lambda+\frac\pi2,\alpha}}}_ξ}$$
  where $n_\alpha=1$ if $\alpha\in\roots_{\p1}$ and $n_\alpha=\ell$ if
  $\alpha\in\roots_{\p0}$. Furthermore, the module
  $V^{(\ell-1)\rho_0}$ is a simple projective module in $\catn_0$ of dimension $D=\ell^{\card(\roots_{\p0}^-)}2^{\card(\roots_{\p1}^-)}$.
\end{corollary}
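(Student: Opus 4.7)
My plan is to combine the uniqueness part of Theorem~\ref{T:ExTraceCatn} with the standard Hopf link identity in a ribbon category carrying an m-trace: for any two objects $V,V'$ in the ideal of projectives,
$$\qd(V')\,S'(V,V')=\qd(V)\,S'(V',V),$$
which follows from computing the m-trace of the closed Hopf link in two ways via cyclicity and partial trace. Fixing a typical reference module $V^{\mu_0}$ (simple projective of dimension $D$) and an appropriate normalization of $\qd(V^{\mu_0})$, this identity reduces computing $\qd(V^\lambda)$ to the evaluation of the ratio $S'(V^\lambda,V^{\mu_0})/S'(V^{\mu_0},V^\lambda)$.

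Both factors are then evaluated with Lemma~\ref{L:Sprime}: the denominator uses the explicit product formula available when the first argument is typical, while the numerator is $\varphi_{2\mu_0+\pi}(\sch(V^\lambda))$ and is expanded via the character formula $\sch(V^\lambda)=[\lambda]\prod_{\alpha\in\roots_{\p0}^+}(1+[-\alpha]+\cdots+[(1-\ell)\alpha])\prod_{\alpha\in\roots_{\p1}^+}(1-[-\alpha])$ recalled in that proof. The key algebraic step is the rewrite
$$1-ξ^{-k\brk{2\mu_0+\pi,\alpha}}=ξ^{-k\brk{\mu_0+\pi/2,\alpha}}\,\qn{k\brk{\mu_0+\pi/2,\alpha}}_ξ,$$
which converts both numerator and denominator into products of quantum integers at $\mu_0+\pi/2$ and $\lambda+\pi/2$. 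After forming the quotient, the quantum integer factors indexed by $\mu_0$ cancel against the corresponding ones in the denominator, while the residual $ξ$-phases combine (using the symmetry of $\brk{\cdot,\cdot}$) to a single scalar independent of $\lambda$, hence absorbable into $\qd(V^{\mu_0})$. What remains is precisely the claimed product in $\lambda$, with $n_\alpha=1$ on odd roots coming from the Weyl-denominator factors $(1-[-\alpha])$ and $n_\alpha=\ell$ on even roots coming from the geometric sums telescoped by the identity above.

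For the final assertion, I would verify that $\lambda_0:=(\ell-1)\rho_{\p0}$ lies in $\Lambda_R$ and is typical. Since $\ell-1$ is even and $2\rho_{\p0}$ (the sum of the positive even roots) lies in $\Lambda_R$, we have $\lambda_0=\tfrac{\ell-1}{2}(2\rho_{\p0})\in\Lambda_R$, so $V^{\lambda_0}\in\catn_0$. For typicality, using $\brk{\rho_{\p0},ε_i-δ_j}=\tfrac{m+n+2-2i-2j}{2}$ and $\brk{\rho_{\p1},ε_i-δ_j}=\tfrac{n-m}{2}$, one computes
$$2\brk{\lambda_0+\rho,\,ε_i-δ_j}=\ell(m+n+2-2i-2j)-(n-m),$$
which belongs to $\ell\Z$ if and only if $m-n\in\ell\Z$. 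The assumption $m\neq n$ combined with $|m-n|\leq m+n-2\leq \rk-1<\ell$ rules this out, so Equation~\eqref{eq:typ} is satisfied and $V^{\lambda_0}$ is typical, hence simple of dimension $D$ by Definition~\ref{D:9} and projective since typicality places it in the projective ideal.

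The main obstacle is the phase bookkeeping in the first step: one must verify that the $ξ$-phases produced by the two applications of Lemma~\ref{L:Sprime} reassemble into a single $\lambda$-independent constant, so that a single rescaling of the m-trace produces the clean symmetric formula in the statement. Once that cancellation is established the product form in $\lambda$ drops out, and the root-system verification for $\lambda_0$ in the second step is elementary.
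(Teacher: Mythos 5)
Your derivation of the modified-dimension formula is essentially the paper's: both arguments use the symmetry $\qd(V)S'(W,V)=\qd(W)S'(V,W)$ coming from cyclicity and the partial trace, a fixed typical reference module, the two formulas of Lemma \ref{L:Sprime}, and a single rescaling absorbing a $\lambda$-independent constant. The phase bookkeeping you worry about is handled in the paper by observing that the prefactor $\xi^{2\brk{\lambda+\frac\pi2,\mu+\frac\pi2}}$ is symmetric in $\lambda$ and $\mu$, so that $f(\lambda)=\xi^{-2\brk{\lambda+\frac\pi2,\mu+\frac\pi2}}S'(V^\mu,V^\lambda)$ satisfies $\qd(V^\lambda)f(\lambda)=\qd(V^\mu)f(\mu)=\mathrm{const}$; this is exactly the cancellation you describe.

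For the second assertion you take a genuinely different route, and it has a gap. Your verification that $(\ell-1)\rho_{\p0}\in\Lambda_R$ and that this weight is typical is correct. But ``hence simple of dimension $D$ by Definition \ref{D:9}'' does not follow: typicality only tells you that $V^{(\ell-1)\rho_{\p0}}$ coincides with the typical envelope of the simple $U_\xi(\g_{\p0})$-module $V_0$ of highest weight $(\ell-1)\rho_{\p0}$, hence has dimension $2^{mn}\dim_\C V_0$. To reach $D$ you still need $\dim_\C V_0=\ell^{\frac12(m^2+n^2-m-n)}$, i.e., that $V_0$ is the quantum Steinberg module of $\g_{\p0}$. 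This is not supplied by the paper's generic-dimension statement (which covers weights with $c_i\notin\Z$, whereas $(\ell-1)\rho_{\p0}$ is integral), so it is an extra external input that must be cited (De Concini--Kac, Andersen--Polo--Wen). Likewise, ``projective since typicality places it in the projective ideal'' is asserted elsewhere in the paper but not proved for an integral weight such as this one; the paper instead deduces projectivity by noting $S'(V^\lambda,V^{(\ell-1)\rho_{\p0}})\neq 0$ for generic $\lambda$, so the open Hopf link exhibits $V^{(\ell-1)\rho_{\p0}}$ as a retract of the projective module $(V^\lambda)^*\otimes V^{(\ell-1)\rho_{\p0}}\otimes V^\lambda$. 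The paper then pins down the dimension without Steinberg: it recovers the super character of $V^{(\ell-1)\rho_{\p0}}$ from the $S'$-pairing against generic $V^\lambda$ and evaluates it at a ring morphism sending even roots to $1$ and odd roots to $-1$, giving $\dim_\C V^{(\ell-1)\rho_{\p0}}\geq D$, which combines with the a priori bound $\leq D$ from Notation \ref{N:1}. Your approach is shorter and viable, but only once the Steinberg dimension and the projectivity of this particular typical module are independently justified.
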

\begin{proof} Let $\mt$ be the m-trace given in Theorem \ref{T:ExTraceCatn}.  
Let $\mu$ be a weight with $[\mu]\in \Gr\setminus \XX$.  From Theorem \ref{T:SScatn} we know the module $V^\mu$ is a projective simple module.  If $V$ and $W$ are projective modules then the properties
of the m-trace imply
$$
\qd(V)S'(W,V)=\mt_{V\otimes W}(c_{W,V}\circ c_{V,W}) =\mt_{W\otimes
  V}(c_{V,W}\circ c_{W,V})=\qd(W)S'(V,W).
$$
Let $f(\lambda)$ be the function of $\lambda$ given by 
$$f(\lambda)=ξ^{-2\brk{\lambda+\frac\pi2,\mu+\frac\pi2}}S'(V^\mu,V^\lambda)$$
see Lemma \ref{L:Sprime}.
Then  
\begin{equation}
  \label{eq:S'd}
  \qd(V^\lambda)=\qd(V^\mu)\frac{S'(V^\lambda,V^\mu)}{S'(V^\mu,V^\lambda)}
  =\qd(V^\mu)\frac{f(\mu)}{f(\lambda)}.
\end{equation}
So $\qd(V^\lambda)$ is a constant
divided by $f(\lambda)$ (as $\lambda$ varies this constant stays the
same). By rescaling the m-trace by $(\qd(V^\mu)f(\mu))^{-1}$ we have
$\qd(V^\lambda)=\frac1{f(\lambda)}$ and we obtain the formula given in
the corollary.

For the second statement of the corollary, notice that Lemma
\ref{L:Sprime} implies $S'(V^\lambda, V^{(\ell-1)\rho_0})\neq 0$.
Thus, the open Hopf link with $V=V^\lambda$ and
$V'=V^{(\ell-1)\rho_0}$ given in Equation \ref{eq:Sprime} is a
non-zero morphism.  This morphism defines $V^{(\ell-1)\rho_0} $ as a
retract of
$(V^\lambda)^* \otimes V^{(\ell-1)\rho_0} \otimes V^\lambda$.  Since
$V^\lambda$ is projective then
$(V^\lambda)^* \otimes V^{(\ell-1)\rho_0} \otimes V^\lambda$ is
projective.  Also, since the set of projective objects is an ideal,
then using that $V^{(\ell-1)\rho_0} $ is a retract, it implies that it is in the ideal as well and so it is
projective.

Now since $V^{(\ell-1)\rho_0} $ is a simple projective module, the non
degenerate m-trace is non zero on
$\End_\catd(V^{(\ell-1)\rho_{\p0}})=\C\Id$.  Let
$d_0=\qd(V^{(\ell-1)\rho_{\p0}})\neq 0.$ Then Equation \eqref{eq:S'd} and
Lemma \ref{L:Sprime} imply that for generic $\lambda\in\h^*$,
$$\vp_{2\lambda+\pi}(\sch(V^{(\ell-1)\rho_0}))
=S'(V^{(\ell-1)\rho_0},V^\lambda)=d_0S'(V^\lambda,V^{(\ell-1)\rho_0})\qd(V^\lambda)^{-1}$$
but
$$S'(V^\lambda,V^{(\ell-1)\rho_0})=\vp_{-2\rho_1}(\sch(V^\lambda))=
ξ^{\brk{\lambda,-2\rho_1}}\ell^{\card(\Delta_0^+)}(1-ξ^{n-m})^{\card(\Delta_1^+)}.$$
%So, up to two constants $C,C'$, that does not depends on $\lambda$. We have
So, there exists two constants $C,C'$, that does not depends on $\lambda$ such that
\begin{align*}
  \vp_{2\lambda+\pi}&(\sch(V^{(\ell-1)\rho_0}))=\\&=C\,ξ^{2\brk{\lambda,-\rho_1}}\qd(V^\lambda)^{-1}=
  \\&=C'\vp_{2\lambda+\pi}([(\ell-1)\rho_0]\prod_{\alpha\in\roots_{\p0}^+}\bp{1+[-\alpha]
  +\cdots+[(1-\ell)\alpha]}\prod_{\alpha\in\roots_{\p1}^+}(1-[-\alpha])).
\end{align*}
Since this is true for generic $\lambda$,
we have  that the super character 
$$\sch(V^{(\ell-1)\rho_0})=[(\ell-1)\rho_0]\prod_{\alpha\in\roots_{\p0}^+}\bp{1+[-\alpha]
  +\cdots+[(1-\ell)\alpha]}\prod_{\alpha\in\roots_{\p1}^+}(1-[-\alpha])).$$
 To see that its dimension
is $D$, let $\omega=ξ^{\frac{\ell n}{2(m-n)}}$ so that
$\omega^{m-n}=(-1)^n$.  We consider the ring morphism
$\psi:\Z[\Lambda_W]\to\C$ which sends $[\ve_i]\mapsto\omega$
($i=1\cdots m$) and $[\delta_i]\mapsto-\omega$ ($i=1\cdots n$).  (It
is well defined because
$\psi([\str])=\psi([\sum_i\ve_i-\sum_j\delta_j])=
\omega^m(-\omega)^{-n}=1$).
 Then $\psi([\alpha])=1$ if $\alpha\in\Delta_{\wb 0}^+\cup \Delta_{\wb 0}^-$ and $\psi([\alpha])=-1$ if $\alpha\in\Delta_{\wb 1}^+\cup \Delta_{\wb 1}^-$.
 Now $\psi(\sch(V^{(\ell-1)\rho_0}))=D=\ell^{\card(\roots_{\p0}^+)}2^{\card(\roots_{\p1}^+)}$.   Then 
$D=|\psi(\sch(V^{(\ell-1)\rho_0}))|\le \dim_\C(V^{(\ell-1)\rho_0})$.  On the other hand, from the remark in Notation
\ref{N:1} we have $\dim_\C(V^{(\ell-1)\rho_0})\le D$, so
$\dim_\C(V^{(\ell-1)\rho_0})=D$.
\end{proof}
\subsection{The relative modular category of projective modules}
In this subsection, we finalize the construction and show that the
 category $\catn$ is a relative modular category.  
Above, we have already shown that $\catn$ is
pre-modular $\Gr$-category. 
It remains to prove the existence of a modularity parameter (see Definition \ref{D:2}). We start with the following definition. 
 \begin{definition}[Transparent endomorphism]
 Let us fix an object $V \in \catn_0$. A morphism $f \in \End_{\catn}(V)$ is called \emph{transparent} if for any $U,W \in \catn_0$ one has:
 \begin{equation}\label{eq:7} 
 \begin{cases}
c_{V,U}\circ (f \otimes Id_{U}) \circ c_{U,V}=Id_{U} \otimes f\\
c_{W,V}\circ (Id_{W} \otimes f) \circ c_{V,W}=f \otimes Id_{W}. 
 \end{cases}
 \end{equation}
 
 \end{definition}
\begin{lemma}[Factorisation property] \label{L:4} 
Let $V \in \catn$ and $f \in End_{\catn}(V)$ be a transparent morphism. Then, there exist $n\in \N$, $z_{1},...,z_n \in \Zt$ and a set of morphisms $\pi_i \in \Hom_{\catn}\left(V, \sigma (z_i) \right)$ and $\eta_i \in \Hom_{\catn}\left( \sigma (z_i), V \right)$ such that: 
\begin{equation}\label{eq:10}
f= \sum_{i=1}^n \eta_i \circ \pi_i.
\end{equation}
\end{lemma}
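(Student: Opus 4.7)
The plan is to combine generic semisimplicity (Theorem~\ref{T:SScatn}) with Schur's lemma, reducing to the case of simple modules in a generic degree, and then use the transparency condition to force each surviving component of $f$ to factor through a $\sigma(z)$. Since $\XX$ is small I would fix a generic $[\mu] \in \Gr \setminus \XX$, for which $\catn_{[\mu]}$ carries the completely reduced dominating set $\Theta([\mu]) \otimes \sigma(\Zt)$. For $V$ already lying in such a $\catn_{[\mu]}$ one has $V \cong \bigoplus_k V_{i_k} \otimes \sigma(z_k)$ with the $V_{i_k}$ simple objects of $\Theta([\mu])$; the general case would be reduced to this one by embedding $V$ inside an object of $\catn_{[\mu]}$ (e.g.\ tensoring with a typical projective $T \in \catn_{[\mu]}$ and using the dominating family together with $\ev_T$, $\tcoev_T$ to pull the factorization back to $V$).

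In the semisimple setting, off-diagonal components of $f$ between non-isomorphic simples vanish by complete reducedness, and on a simple summand $W$ Schur's lemma gives $f|_W = \lambda_W \Id_W$. Applying the naturality identity $c_{V,U} \circ (f \otimes \Id_U) = (\Id_U \otimes f) \circ c_{V,U}$, the transparency condition becomes
\[
\lambda_W \bigl(c_{W,U} \circ c_{U,W} - \Id_{U \otimes W}\bigr) = 0
\quad\text{for all } U \in \catn_0 .
\]
Thus either $\lambda_W = 0$, or $W$ has trivial double braiding with every object of $\catn_0$. Using the explicit Cartan action $\HR^\xi(v' \otimes v) = \xi^{\brk{w(v'),w(v)}} v' \otimes v$ from Equation~\eqref{E:ActionHR} together with the identification $\Lambda_\Zt^0 = (\ell \Lambda_W) \cap \Lambda_R$ of Lemma~\ref{L:LambdaZ0}, the latter condition forces the highest weight of $W$ to lie in $\Lambda_\Zt^0$, and hence $W \cong \sigma(z)$ for some $z \in \Zt$. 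Assembling the surviving summands yields the factorization $f = \sum_i \eta_i \circ \pi_i$ through the corresponding $\sigma(z_i)$'s.

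The main obstacle is the classification step: showing that among simples of a generic $\catn_{[\mu]}$, only the $\sigma(z)$'s are fully transparent with respect to $\catn_0$. This is a Müger-centre type statement; the bilinear form $\brk{\cdot,\cdot}$ on $\h^*$ and the description of $\Lambda_\Zt^0$ furnish exactly the right quantitative criterion (the scalars $\xi^{2\brk{\lambda,\mu}}$ must equal $1$ for every weight $\mu$ occurring in some $U \in \catn_0$), but producing enough weight-varied test modules $U \in \catn_0$ to separate a non-$\sigma$ simple from the $\sigma(z)$'s is delicate because $0 \in \XX$ and $\catn_0$ itself is not semisimple. One would need to use simple highest weight modules of $\catn_0$ together with the explicit root structure of $\slmn$ to produce the required separations, and to check that the reduction from general $V$ to $V \in \catn_{[\mu]}$ really transports a non-zero transparent endomorphism to one whose non-$\sigma$ components survive, so that the final factorization is genuine.
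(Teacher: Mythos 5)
There is a genuine gap, and it sits exactly where you flag it. The whole strategy hinges on generic semisimplicity, but by definition a transparent morphism lives on an object $V\in\catn_0$, and $0\in\XX$ (the trivial module is a simple highest weight module of dimension $1<D$), so $\catn_0$ is precisely a degree where Theorem~\ref{T:SScatn} gives you nothing: you cannot decompose $V$ into simples, Schur's lemma is unavailable, and off-diagonal components of $f$ need not vanish. The proposed rescue — tensoring with a typical $T\in\catn_{[\mu]}$ for generic $[\mu]$ — runs into the grading axiom: $V\otimes T$ lies in $\catn_{[\mu]}$ with $[\mu]\neq 0$, while every $\sigma(z)$ lies in $\catn_0$, so $\Hom_\catn(V\otimes T,\sigma(z))=0$ and no factorization through the $\sigma(z)$'s can be produced upstairs and then pulled back. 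Finally, even granting a semisimple decomposition, the scalar condition $\xi^{2\brk{\lambda,\mu}}=1$ extracted from the Cartan part is only a necessary condition coming from highest weight vectors; it does not by itself show that a transparent simple summand is one-dimensional, and you correctly identify this Müger-centre classification as unresolved. So the proposal is a plan with its central step missing.

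The paper's proof avoids all of this by never invoking semisimplicity. It tests the transparency equations \eqref{eq:7} against a single, explicitly chosen object of $\catn_0$: the $D$-dimensional simple projective module $V^{(\ell-1)\rho_0}$ of Corollary~\ref{C:mtraceD}. Writing the braiding as $\tau\circ\check{\Rm}^{ξ}\HR^{ξ}$ and using that the monomials $\prod_\alpha F_\alpha^{b_\alpha}$ applied to the highest weight vector $v_+$ of $V^{(\ell-1)\rho_0}$ form a \emph{basis}, transparency forces
$\rho_V\bigl(\prod_{\alpha\in\Delta^+}E_\alpha^{b_\alpha}\bigr)(f(v))=0$ and likewise with $F$ in place of $E$, for every weight vector $v\in V$ and all admissible exponents with positive sum. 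The commutation relation $[E_i,F_i]=\frac{K_i-K_i^{-1}}{ξ^{d_i}-ξ^{-d_i}}$ then yields $(K_i^2-1)f(v)=0$, i.e.\ $2\brk{w(f(v)),\alpha_i}\in\ell\Z$, so each weight vector in $\operatorname{Im}(f)$ spans a one-dimensional submodule isomorphic to some $\sigma(z)$, giving $\operatorname{Im}(f)\simeq\bigoplus_i\sigma(z_i)$ and hence the factorization directly. If you want to salvage your approach, the lesson is that the right move is not to classify transparent simples in a generic degree but to extract, from one sufficiently large test module in $\catn_0$ itself, the vanishing of all raising and lowering operators on $\operatorname{Im}(f)$.
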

\begin{proof}
The proof of this factorisation property is similar to the one presented in Lemma 2.3 of \cite{DGP2} for the case of the unrolled quantum group.
The main idea is to use the transparency property of $f$ presented in Equations \eqref{eq:7} for the modules $V$
and $U=W$, which is the dimension $D$ simple projective module $V^{(\ell-1)\rho_0}$.
Let us represent the modules $V$ and $V^{(\ell-1)\rho_0}$ with their corresponding representation $\rho_{V}$ and $\rho_{V^{(\ell-1)\rho_0}}$, respectively.   If $v_+$ is a highest weight vector of $V^{(\ell-1)\rho_0}$ and $v$ is any weight vector of $V$ then $c_{V^{(\ell-1)\rho_0},V}(v_+ \otimes v)$ is proportional to $v \otimes v_+$ because
 \[
   \rho_{V^{(\ell-1)\rho_0}} \left(\prod_{\alpha \in \Delta^+}E_{\alpha}^{b_\alpha} \right)(v_+) = 0
 \]
for all positive integers $b_\alpha$ whose sum is non zero.\\
Furthermore, $c_{V,V^{(\ell-1)\rho_0}}(f(v) \otimes v_+)$ is proportional to $v_+ \otimes f(v)$ because $f$ is transparent. From Corollary \ref{C:mtraceD} we have
 \[
\left\{ \rho_{V^{(\ell-1)\rho_0}} \left( \prod_{\alpha \in \Delta^+}F_{\alpha}^{b_\alpha}  \right)(v_+) \Biggm| 0 \leq b_\alpha < \ell \text{ if } \alpha\in \roots_{\p0}^+ \text{ and } 0 \leq b_\alpha \leq 1 \text{ if } \alpha\in \roots_{\p1}^+ \right\}  
 \]
 is a basis of $V^{(\ell-1)\rho_0}$. This means that
 \[
  \rho_V \left( \prod_{\alpha \in \Delta^+}E_{\alpha}^{b_\alpha} \right)(f(v)) = 0
 \]
 for every weight vector $v \in V$ and  for all integers $b_\alpha$ (whose sum is strictly positive) such that $0 \leq b_\alpha < \ell$ if $\alpha\in \roots_{\p0}^+$ and $0 \leq b_\alpha \leq 1$ if $\alpha\in \roots_{\p1}^+$.  
 
 Analogously, \[
  \rho_V \left( \prod_{\alpha \in \Delta^+}F_{\alpha}^{b_\alpha} \right)(f(v)) = 0
 \]
 for every weight vector $v \in V$ and  for all integers $b_\alpha$ (whose sum is strictly positive) such that $0 \leq b_\alpha < \ell$ if $\alpha\in \roots_{\p0}^+$ and $0 \leq b_\alpha \leq 1$ if $\alpha\in \roots_{\p1}^+$. 

Now, we look at the weight of the vector $f(v)$. We will use the following relation between the generators of the quantum group from Definition \ref{relqgp}:
\begin{equation}\label{eq:9}
[E_{i},F_{j}]=δ_{i,j}\frac{K_i-K_i^{-1}}{ξ^{d_i}-ξ^{-d_i}}.
\end{equation}
Combining the previous equations
and \eqref{eq:9}, we get:
$$(K_i-K_i^{-1})\left( f(v) \right)=0.$$
This is equivalent to:
$$(K_i^{2}-1)\left( f(v) \right)=0, \forall i\in \{1,...,\rk\}.$$
The previous relation shows that $$2\brk{w(f(v)),\alpha_i}\in \ell \Z, \ \ \ \forall i\in \{1,...,\rk\}, \forall v\in V$$ 
here $w(f(v))$ is the weight of the weight vector $f(v)$.

We conclude that each weight vector from the image of $f$ generates a
one dimensional module whose weight is in $\Lambda_\Zt$ (see
Definition \ref{D:3}). This means that the vector $f(v)$ generates a
submodule which is isomorphic to $\sigma(z_v)$ for $z_v\in \Zt$. This
procedure applied to all the weight vectors of $V$, shows that there
exists $n \in \N$ and some $z_i \in \Lambda_{\Zt}$ such that the image
of $f$ is decomposed as below:
\begin{equation}
Im(f)\simeq \bigoplus _{i=1}^{n} \sigma (z_i) \subseteq V.
\end{equation}
Let us consider the corresponding projection induced by $f$ and
inclusion maps:
\begin{equation}
\begin{aligned}
& \pi_i: V\rightarrow \sigma (z_i)\\
& \eta_i: \sigma (z_i) \rightarrow V. 
\end{aligned}
\end{equation}
This tells us that we have the desired decomposition from Equation
\eqref{eq:10} and concludes the proof.
\end{proof}
\begin{proposition}[Modularity]
  There exists a modularity parameter $ζ_{\Omega}$ such that corresponding
  condition of Definition \ref{D:2} is true in the category $\catn$.
\end{proposition}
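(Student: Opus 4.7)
The plan is to apply the factorisation property for transparent endomorphisms (Lemma~\ref{L:4}) to the endomorphism displayed on the left-hand side of the relative modularity equation~\eqref{eq:mod}. Graphically, this endomorphism $f$ is the closure obtained by encircling two parallel strands colored by the generic simple objects $V_i, V_j \in \Theta(g)$ with a strand carrying the Kirby color $\Omega_h = \sum_{k \in I_h} \qd(V_k) V_k$. The strategy is the classical one: show $f$ is transparent, factor it through one-dimensional modules, and use the simplicity of $V_i,V_j$ together with the complete reducibility of $\Theta(g) \otimes \sigma(\Zt)$ to collapse the sum to a scalar multiple of the identity.

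First I would verify that $f$ is a transparent endomorphism in the sense of~\eqref{eq:7}. The key point is that any strand of $\catn_0$ may be slid across the $\Omega_h$-colored strand without altering $f$; this sliding identity is a consequence of the generic semisimplicity of $\catn_h$ from Theorem~\ref{T:SScatn}, together with the cyclicity and partial-trace properties of the m-trace furnished by Theorem~\ref{T:ExTraceCatn}. Concretely, one decomposes the relevant tensor product into the completely reduced dominating set $\Theta(h) \otimes \sigma(\Zt)$ and applies the cyclicity of $\mt$ summand by summand to obtain both equations of~\eqref{eq:7}.

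Once transparency is established, Lemma~\ref{L:4} writes $f = \sum_p \eta_p \circ \pi_p$ with each factor passing through a one-dimensional module $\sigma(z_p)$. Reinterpreting $f$ via the duality of $\catn$ as a morphism $V_i \to V_j$, each summand factors as $V_i \to \sigma(z) \to V_j$, and hence contributes only when $V_j \simeq V_i \otimes \sigma(z)$ for some $z \in \Zt$. But the dominating set $\Theta(g) \otimes \sigma(\Zt)$ is completely reduced, so such an isomorphism forces $i=j$ and $z = 0$. Therefore $f$ equals $\zeta_\Omega(g,h,i,j)\,\delta_{ij}\,\Id$ for some scalar $\zeta_\Omega(g,h,i,j)$, and it remains to show that this scalar is independent of its arguments. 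Evaluating $f$ on a convenient test, for instance via $V_i = V^{(\ell-1)\rho_{\p0}}$ from Corollary~\ref{C:mtraceD}, and combining the product formulas for $\qd(V_k)$ (Corollary~\ref{C:mtraceD}) and $S'(V_i, V_k)$ (Lemma~\ref{L:Sprime}), one reduces the verification to evaluating
$$\zeta_\Omega(g,h) \;=\; \sum_{k \in I_h}\qd(V_k)\,S'(V_i, V_k),$$
where the sum telescopes thanks to the cancellation of the $\ell$-quantum brackets $\qn{\ell\brk{\lambda_k+\tfrac{\pi}{2},\alpha}}_\xi$ appearing both in $S'$ and in the denominator of $\qd$.

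The main obstacle is the last step: proving explicit independence of $\zeta_\Omega$ on $(g,h)$. The non-super counterpart was treated in \cite{GP13}; the present super setting requires handling the odd roots carefully, since they contribute linear factors $\qn{\brk{\lambda_k+\tfrac{\pi}{2},\alpha}}_\xi$ (without the $\ell$-periodic telescoping enjoyed by the even roots) simultaneously in $\qd$ and in $S'$. These odd-root contributions appear symmetrically in numerator and denominator and cancel when the summation is performed over a fundamental domain of typical weights modulo $\Lambda_\Zt^0$. Once this cancellation is carried out, the residual constant is manifestly independent of the degree parameters $g$ and $h$, yielding the required universal modularity parameter.
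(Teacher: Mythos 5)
Your overall strategy for \emph{existence} of the scalar matches the paper's: the paper also invokes the factorisation property for transparent endomorphisms (Lemma \ref{L:4}) together with general properties of the m-trace, deferring the details to \cite{DGP2}. However, there are two genuine gaps. First, Definition \ref{D:2} requires $\zeta_\Omega\in\C^*$, and you never establish non-vanishing; this is in fact the main substantive content of the paper's proof. Second, the explicit expression you reduce to, $\sum_k \qd(V_k)S'(V_i,V_k)$, is not the modularity parameter: that single-$S'$ sum is the quantity appearing in the $\Delta_\pm$ computation (encircling \emph{one} strand by the Kirby color). The modularity condition \eqref{eq:mod} encircles \emph{two} strands, and taking the m-trace of both sides with $V_i=V_j=V^\lambda$ yields
\begin{equation*}
\zeta_\Omega=\sum_\mu \qd(V^{\mu})\,\qd(V^\lambda)\,S'(V^{\mu},V^\lambda)\,S'\bigl((V^\lambda)^*,V^{\mu}\bigr).
\end{equation*}
The cancellation here is not the delicate telescoping of $\ell$-quantum brackets over a fundamental domain that you anticipate as the ``main obstacle''; it is immediate from the proportionality $S'(V^{\mu},V^\lambda)=\xi^{2\brk{\lambda+\frac\pi2,\mu+\frac\pi2}}\qd(V^\lambda)^{-1}$ (Lemma \ref{L:Sprime} and Corollary \ref{C:mtraceD}) together with $(V^\lambda)^*\simeq V^{-\pi-\lambda}$: each summand equals $1$, so $\zeta_\Omega=|I_h|\neq0$, which settles both independence and non-degeneracy of the parameter in one stroke.

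A smaller point: your proposed test object $V^{(\ell-1)\rho_{\p0}}$ lies in $\catn_0$, and $0\in\XX$ (the trivial module is a simple highest weight module of dimension $1<D$ in degree $0$), so it is not an admissible $V_i$ with $i\in I_g$ for $g\in\Gr\setminus\XX$ in the modularity condition. The evaluation must be carried out with $V^\lambda$ for $[\lambda]$ generic, as the paper does.
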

\begin{proof}
  The existence of $ζ_{\Omega}\in\C$ follows exactly as the proof of
  modularity property from \cite{DGP2}. The key ingredients are the
  factorisation property for transparent endomorphisms, as we showed
  in Lemma \ref{L:4} together with certain general properties of
  modified traces.

  Now to show that $ζ_{\Omega}$ is non zero, we compute the m-trace of both
  side of \eqref{eq:mod} when $V_i=V_j=V^\lambda$ and
  $\Omega_h=\sum_\mu\qd(V^{\mu})V^{\mu}$.  This gives
  \begin{align*}
    ζ_{\Omega}&=\sum_\mu\qd(V^{\mu})\qd(V^\lambda)S'(V^{\mu},V^\lambda)S'(\bp{V^\lambda}^*,V^{\mu})=\sum_\mu1\neq0
  \end{align*}
  where the second equality follows from
  $S'(V^{\mu},V^\lambda)=ξ^{2\brk{\lambda+\frac\pi2,\mu+\frac\pi2}}\qd(V^\lambda)^{-1}$
    (see Lemma \ref{L:Sprime} and Corollary \ref{C:mtraceD}) and $\bp{V^\lambda}^*\simeq V^{-\pi-\lambda}$.
\end{proof}
Putting together all these properties, we conclude.
\begin{theorem}\label{THEOREM1b}
$\catn$ is a modular $\Gr$-category relative to $(\Zt,\XX)$.
\end{theorem}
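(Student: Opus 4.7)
The plan is to verify in turn the items of Definition \ref{D:1} (pre-modularity) and then the extra modularity parameter condition of Definition \ref{D:2}, essentially assembling the results established throughout Section \ref{S:ProjMod}. Since all the substantive work has already been done piece by piece, the role of this proof is bookkeeping: check that the structural data matches the axioms.

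First I would recall that $\catn$ is a $\C$-linear ribbon category by Corollaries \ref{C:BraidedCat} and \ref{C:D-ribbon}, with ground ring $\End_\catn(\unit)=\C$ (the trivial module is simple). Next I would instantiate the three pieces of data required by Definition \ref{D:1}: the $\Gr=\h^*/\Lambda_R$-grading on $\catn$ is given by Definition \ref{D:3} (the grading axioms are immediate from the weight decomposition and the fact that tensor products add weights); the family $\{\sigma(z)\}_{z\in\Zt}$ with $\Zt=\Z/2\times\Lambda_\Zt^0$ is a free realisation of $\Zt$ in $\catn_0$ by the Proposition following Lemma \ref{L:LambdaZ0} (using in particular Lemma \ref{L:LambdaZ0} for the triviality of the twist on $\sigma(z)$); and the non-zero m-trace on the ideal of projective modules comes from Theorem \ref{T:ExTraceCatn}, with its non-triviality visible from the explicit dimension formula in Corollary \ref{C:mtraceD}.

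Then I would verify the two required properties. Generic semisimplicity with critical set $\XX$ (Notation \ref{N:1}, which is small and symmetric) is the content of Theorem \ref{T:SScatn}, which produces for each $[\lambda]\in\Gr\setminus\XX$ a finite set $\Theta_{[\lambda]}$ of regular simple objects such that $\Theta_{[\lambda]}\otimes\sigma(\Zt)$ is a completely reduced dominating set for $\catn_{[\lambda]}$. The compatibility condition with bilinear form $\psi([\mu],(\bar z,\lambda))=\xi^{\brk{\mu,\lambda}}$ is furnished by Lemma \ref{P:compat}. This shows $\catn$ is a pre-modular $\Gr$-category relative to $(\Zt,\XX)$.

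Finally, the additional modularity parameter $\zeta_\Omega$ of Definition \ref{D:2} was constructed in the Proposition immediately preceding the theorem: the factorisation Lemma \ref{L:4} for transparent endomorphisms reduces the equality \eqref{eq:mod} to a computation, and the explicit evaluation using the formulas of Lemma \ref{L:Sprime} and Corollary \ref{C:mtraceD} shows $\zeta_\Omega=\sum_\mu 1\neq 0$ (the sum being over the finite index set $I_h$). There is essentially no obstacle here since each ingredient has been proved separately; the only care point is checking the compatibility of the indexing conventions (in particular that the same grading $\Gr$, the same $\Zt$ and the same $\XX$ are used consistently in the free realisation, the grading, the generic semisimplicity statement, and the modularity parameter proof), which is immediate from how the objects have been defined. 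Collecting all of these statements therefore yields Theorem \ref{THEOREM1b}. \qed
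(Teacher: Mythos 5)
Your proposal is correct and matches the paper's own argument, which is literally the one-line assembly ``putting together all these properties'' of the ribbon structure, grading, free realisation, m-trace, generic semisimplicity, compatibility, and modularity parameter established earlier in Section \ref{S:ProjMod}. Your more explicit bookkeeping of which result supplies which axiom of Definitions \ref{D:1} and \ref{D:2} is exactly what the paper leaves implicit.
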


\section{Perturbative modules}
In this section, we study the category of perturbatives modules $\catp$ (see definitions \ref{D:perturb} and \ref{D:PertMod}).
\begin{remark}\label{R:piRho}
  The category $\catp$ is also the full subcategory of $\catd$ form by
  module where $K_{2\ell\rho_0}$ acts trivially.  Indeed, for
  $i=1\ldots r$, $\brk{2\ell\rho_0,w_i}$ is in $\ell\Z$ and
  $\brk{2\ell\rho_0,w_m}=0$ so for any perturbative weight $\lambda$,
  $ξ^\brk{2\ell\rho_0,\lambda}=1$.  In particular the pivotal structure
  in $\catp$ given by $K_\pi$ coincides with the specialisation of the pivotal structure
  of $U_q(\frakg)$-mod given by $K_{2\rho}$.
\end{remark}
\subsection{Simple typical modules}
As we will explain, the characters and fusion rules for typical perturbative  modules 
reduce to the study of $\Ux(\frakg_0)$-modules at root of unity. 
This is in
general a difficult problem but it is known that simple highest
weight modules with the highest weight in an alcove $C^\le$ are
specializations of $\Ux(\frakg_0)$-modules for generic $q$.

\begin{definition}[Alcove]
Consider the following sums of even simple roots $$\alpha_{L_1}=\sum_{i=1}^{m-1}\alpha_i \;\;\text{ and }\;\; \alpha_{L_2}=\sum_{i=m+1}^{m+n-1}\alpha_i.$$
Note $\alpha_{L_1}$ (resp. $\alpha_{L_2}$) is also the  highest weight of the adjoint representation of $\sll(m)\subset\g_0$ (resp. $\sll(n)\subset\g_0$).
  Let us consider the following subsets of weights:
  $$C=\{\w ac\in \h^*:(c_1,\ldots,c_{r-1})\in\N^{r-1},a\in\C\}$$
  $$C^\le=\{\w ac\in C:\brk{\w ac+\rho_0,\alpha_{L_1}}\le \ell
  \text{ and }\brk{\w ac+\rho_0,-\alpha_{L_2}}\le \ell\}$$
$$C^<=\{\w ac\in C:\brk{\w ac+\rho_0,\alpha_{L_1}}< \ell\text{ and
}\brk{\w ac+\rho_0,-\alpha_{L_2}}< \ell\}.$$
 In particular, for a weight
$\w ac= c_1w_1+\cdots+c_{m-1}w_{m-1}+a 
w_m+c'_1w_{m+1}+\cdots+c'_{n-1}w_{{\rk}}$, we have
\begin{equation}\label{eq:long-root}
  \brk{\w ac+\rho_0,\alpha_{L_1}}=\sum_{k=1}^{m-1}(c_k+1)\text{ and
  }\brk{\w ac+\rho_0,-\alpha_{L_2}}=\sum_{k=1}^{n-1}(c'_k+1).
\end{equation}
\end{definition}
Using this, we obtain the following description for the alcove:
\begin{equation}\label{D:7}  
\begin{aligned}
 C^\le=\Big\{\w ac\in C \mid  \sum_{k=1}^{m-1}(c_k+1)\leq  \ell \text{ and } \sum_{k=1}^{n-1}(c'_k+1) \leq  \ell \Big\}\\
C^{<}=\Big\{\w ac\in C \mid  \sum_{k=1}^{m-1}(c_k+1)<  \ell \text{ and } \sum_{k=1}^{n-1}(c'_k+1) <  \ell \Big\}
\end{aligned}
\end{equation}

The following theorem is a consequence of \cite{APK}.
\begin{theorem}\label{T:Andersen}
  Assume $\lambda\in C^\le$ then the simple $U_ξ\g_0$-module
  $V^\lambda_ξ$ with highest weight $\lambda$ is obtained by
  specialization of the $U_q\g_0$ simple module $V^\lambda_q$ of
  highest weight $\lambda$.  In particular, they have the same
  dimension and the same character.
\end{theorem}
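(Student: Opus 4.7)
The strategy is to reduce the statement to the classical result of Andersen--Polo--Wen (and its companions) for simple Lie algebras. First, I would exploit the splitting of the even part of the quantum group: the generators $\{E_i,F_i,H_i\}_{i\ne m}$ of $U_q\g_0$ span a subalgebra isomorphic to $U_q\sll(m)\otimes U_q\sll(n)$, one factor built from the simple roots $\alpha_1,\dots,\alpha_{m-1}$ and the other from $\alpha_{m+1},\dots,\alpha_{\rk}$, while the remaining Cartan direction spanned by $H_m$ (equivalently $K_m$) commutes with everything modulo its diagonal action on weights. Given $\lambda=\w ac\in C$, the components $\lambda'=\sum_{k<m}c_k w_k$ and $\lambda''=\sum_{k>m}c_k w_k$ are dominant integral weights for $\sll(m)$ and $\sll(n)$ respectively, so $V^\lambda_q$ factors as the outer tensor product $V^{\lambda'}_q\boxtimes V^{\lambda''}_q$ twisted by the one-dimensional character along $aw_m$.

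Next I would translate the alcove hypothesis. Because the set of even positive roots of $\slmn$ is the disjoint union of the positive root systems of $\sll(m)$ and of $\sll(n)$, the half-sum $\rho_{\p0}$ restricts to $\rho_{\sll(m)}+\rho_{\sll(n)}$, and $\alpha_{L_1}$ (respectively $-\alpha_{L_2}$) is exactly the highest root of $\sll(m)$ (respectively of $\sll(n)$). Using formulas \eqref{eq:long-root}, the defining inequalities of $C^\le$ become precisely the closed-alcove conditions $\brk{\lambda'+\rho_{\sll(m)},\theta_{\sll(m)}}\le\ell$ and $\brk{\lambda''+\rho_{\sll(n)},\theta_{\sll(n)}}\le\ell$; since $\ell$ is odd and $\ell\ge\rk$, this is exactly the range where the results of \cite{APK} apply.

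For each simple factor I then invoke the main theorem of \cite{APK}: a simple highest weight $U_q\sll(k)$-module of type~$1$ whose highest weight lies in the closure of the fundamental alcove at level $\ell$ remains simple after specializing $q$ to $\xi$, and its character coincides with the generic Weyl character. Applying this simultaneously to $V^{\lambda'}_q$ and $V^{\lambda''}_q$, and tensoring with the scalar module on which $H_m$ acts by $a$ (which obviously specializes without trouble), I obtain a simple $U_\xi\g_0$-module with highest weight $\lambda$ and the predicted character; by uniqueness of the simple highest weight module this is $V^\lambda_\xi$. The main bookkeeping obstacle will be to verify the compatibility between the form $\brk{\cdot,\cdot}$ of Subsection~\ref{SS:SuperLieAlg} (including the sign $\brk{\delta_i,\delta_j}=-\delta_{ij}$ and the signs $d_i=\pm1$) and the pairings used in \cite{APK}, so that the level is correctly normalized to $\ell$ on each side; once this bookkeeping is done the theorem follows as a direct corollary of the classical result.
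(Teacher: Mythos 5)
Your plan follows essentially the same route as the paper: decompose $U_q\g_0$ into $U_q\sll(m)\otimes U_q\sll(n)$ plus the central Cartan direction $H_m$, match the inequalities defining $C^\le$ with the closed-alcove conditions for each $\sll$ factor, and quote the classical specialization result (the paper cites \cite[Proposition 11.2.5]{CP95} rather than \cite{APK} directly, but this is the same circle of results).

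One step you omit, which the paper explicitly addresses and which is not mere bookkeeping: the alcove-simplicity theorems of Andersen--Polo--Wen and Chari--Pressley are proved for the \emph{restricted} (Lusztig divided-power) form of the quantum group at $\xi$, whereas $U_\xi\g_0$ here is the specialization of the $\K_\ell$-form \emph{without} divided powers. Simplicity over the larger algebra does not by itself give simplicity over the subalgebra, so you must additionally verify that the $\ell$\textsuperscript{th} divided powers act by zero on these Weyl modules (this is \cite[Proposition 11.2.10]{CP95}); then every $U_\xi\g_0$-submodule is automatically a submodule for the divided-power algebra and simplicity descends. With that verification added, your argument goes through; the normalization of the form $\brk{\cdot,\cdot}$ that you flag is indeed the other point to check, since $d_i=-1$ for $i>m$ means the $\sll(n)$ factor carries the negative of the standard pairing, which is why $-\alpha_{L_2}$ rather than $\alpha_{L_2}$ appears in the definition of $C^\le$.
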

\begin{proof} The specialisation of a simple module at generic $q$ is
  called a Weyl module.  In \cite[Proposition 11.2.5]{CP95},
  inequalities similar to those defining the alcove are defined for
  the quantum groups $U_\xi\sll_m$ or $U_\xi\sll_n$.  As a
  consequence, the simple $U_\xi\g_0$-modules, with highest weight in
  the alcove are the tensor product of a $U_\xi\sll_m$ Weyl module
  with a $U_\xi\sll_n$ Weyl module.  Note, that in \cite{CP95}, Chari
  and Presley, work with a larger algebra that contains the
  $\ell$\textsuperscript{th} divided powers of the generator but they
  show in Proposition 11.2.10 that these $\ell$\textsuperscript{th}
  divided powers act by $0$ on these modules and so they remain simple
  as a $U_\xi\g_0$-modules.
\end{proof}

\begin{remark}\label{R:positiveRootEv} Recall $\roots^+_{\p 0}=\{ε_i-ε_j,\,1\leq i<j\leq m\}\cup
\{δ_i-δ_j,\,1\leq i<j\leq n\}$.
We will need the following inequalities in the proof of Theorem \ref{C:22}. For $1\leq i<j\leq m$ and $1\leq s<t\leq n$ a direct calculation shows 
\begin{align*}
  \brk{\w ac+\rho,ε_i-ε_j}  &= j-i+\sum_{k=i}^{j-1}c_k \leq m-1 + \sum_{k=i}^{m-1}c_k\\
     \brk{\w ac+\rho,-δ_s+δ_t}    & =-s+t+\sum_{k=s}^{t-1}c_k' \leq n-1 +\sum_{k=1}^{n-1}c_k'
     \end{align*}
    where these sums are strictly greater than zero. Thus,
     If  $\w ac \in C^<$ then 
     $$ \brk{\w ac+\rho,ε_i-ε_j},  \brk{\w ac+\rho,-δ_s+δ_t}\in \{1,...,\ell-1\}.$$  
     Moreover, if $\w ac \in C^\le\setminus C^{<}$ then
     $$
     \brk{\w ac+\rho,ε_1-ε_m}=\brk{\w ac+\rho,\alpha_{L_1}}=\ell\text{ or }  \brk{\w ac+\rho,δ_n-δ_1}=\brk{\w ac+\rho,-\alpha_{L_2}}=\ell.
     $$
\end{remark}

\subsection{Grading on the category $\catp$}\label{S:modiftrace1}
Recall the category of perturbative modules $\catp$ given in Definition \ref{D:PertMod}.
Using the grading defined in Definition \ref{D:3} on the category of  weight modules $\catn$, we will induce a grading on  $\catp$, keeping in mind the fact that we are working at roots of unity.
\begin{definition}[Grading]
  Let us consider the abelian group $\Gp=(\C/ \Z,+)$.
  For $ \bar {z} \in \Gp$, let $\catpz$ be the full subcategory of
  $\catp$ whose objects have the following weight:
  $$Ob(\catpz)=\{ V \in \catp \mid  K_m^\ell \curvearrowright V =
  \xi^{z\ell} \cdot \Id_{V}, \text{ for any representative } z \in \C \text{  of }
  \bar{z}  \}.$$
\end{definition}
\begin{remark}
  If we assume $m>n$, then $\Lambda_W/\Lambda_R$ is cyclic of order
$m-n$ generated by the class of $w_1$.  Also, since $K_m$ acts by $1$ on a weight vector of weight $w_1$, we have
$$\catpz=\bigoplus_{i=0}^{|m-n|-1}\catn_{[\lambda^0_z+iw_1]}.$$
\end{remark}
We are interested in studying certain situations where the decomposition of the tensor product is semi-simple. For this purpose, the typical weights will play an important role. We begin with a remark concerning typical weights in the perturbative setting.
\begin{remark}\label{R:perturbativeWeights}
 We notice that for the case of perturbative weights, we have $c\in\Z^{m+n-2}$. Then, if $a \notin \frac12 \Z$, the condition given in Zhang's Theorem from Equation~\eqref{eq:typ'} is satisfied and so $\lambda_a^c$ is typical. 
This shows that:
$$\C\setminus\frac12\Z\subset \TT_c \text{ for all } c \in \N^{{\rk}-1}$$
where $\TT_c$ is defined in  Notation \ref{N:Ctypical}.
\end{remark}
This remark shows us that away from half integers we obtain typical weights. This motivates the following definition. 
\begin{definition}[Critical set $\Xp$]\label{D:8}  
Let us consider: $$ \Xp:=  \left( \frac12 \Z \right) / \Z   \subseteq \Gp=\C / \Z.$$
\end{definition}

\begin{notation}\ 
  \begin{itemize}
  \item Let $\lambda^0_{a}\in\h^* $ be the weight corresponding to
$(\bar0,a)\in \N^{\rk-1}\times\C$. 
  \item For $\lambda\in\h^*$, let $V_ξ^\lambda$ (resp.\ $V_q^\lambda$) denotes the unique (up to isomorphism) simple highest weight $U^{H}_ξ(\g)$-module (resp.\ $U_q(\g)$-module) with highest weight $\lambda$.
  \end{itemize}
\end{notation}
\subsection{The m-trace on the ideal generated by a typical perturbative module}\label{S:modiftrace2}
To define the m-trace, we need to understand the decomposition of certain tensor products. We will use the following general result, from Lemma 3.4.4. of \cite{AG17}:

\begin{lemma}\label{L:3}  Let $\cat$ be a category of finite dimensional $B$-modules where $\kk$ is a field and $B$ is a $\kk$-algebra.  Let  $V$  be an object in $\cat$.  Let $V_1,...,V_n$ be simple submodules of $V$ such that:
\begin{enumerate}
%\item $V_i$ is simple for each $i$, 
\item  $V_i$ is not isomorphic to $ V_j$, for all $ i,j \in \{1,..,n \}, i\not=j$,
\item  $\dim_\kk(V_1)+...+\dim_\kk(V_n)=\dim_\kk(V)$ where $\dim_\kk$ is the vector space dimension.  
\end{enumerate}
Then $V=V_1 \oplus... \oplus V_n.$
\end{lemma}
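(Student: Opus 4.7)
The plan is to show that the internal sum $V_1+\cdots+V_n$ inside $V$ is actually a \emph{direct} sum, after which the dimension hypothesis (2) will force equality with $V$.

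First, I would prove directness by induction on $n$. The base case $n=1$ is trivial. For the inductive step, assume $V_1+\cdots+V_{n-1}=V_1\oplus\cdots\oplus V_{n-1}$, and look at the $B$-submodule
\[
W := (V_1\oplus\cdots\oplus V_{n-1})\cap V_n \subseteq V_n.
\]
Since $V_n$ is simple, either $W=0$ (and we are done) or $W=V_n$. In the latter case, composing the inclusion $V_n\hookrightarrow V_1\oplus\cdots\oplus V_{n-1}$ with the projections onto each factor gives $B$-module maps $V_n\to V_i$ for $i=1,\ldots,n-1$, and at least one of them must be nonzero (otherwise $V_n$ would be zero). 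By Schur's lemma, applied between the two simple $B$-modules $V_n$ and $V_i$, such a nonzero map is an isomorphism, contradicting hypothesis (1). Hence $W=0$ and the induction proceeds.

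Having established $V_1+\cdots+V_n = V_1\oplus\cdots\oplus V_n$ as a submodule of $V$, the equality of $\kk$-dimensions from hypothesis (2) forces this submodule to coincide with $V$, giving the desired decomposition.

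There is essentially no obstacle here: the argument is a routine application of Schur's lemma, which requires only that the simple modules be compared as $B$-modules (no assumption on $\kk$ being algebraically closed is needed, since one just needs that any nonzero homomorphism between simples is an isomorphism). The statement and proof are standard; the lemma is simply being recorded here in a form convenient for the semisimplicity analysis of tensor products of perturbative typical modules in the sequel.
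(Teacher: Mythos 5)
Your argument is correct: the induction with Schur's lemma shows the internal sum $V_1+\cdots+V_n$ is direct, and the dimension hypothesis then forces it to be all of $V$; note also that the paper itself gives no proof, simply citing Lemma 3.4.4 of \cite{AG17}, and your argument is the standard one that reference uses. The only point worth double-checking is that "at least one projection $V_n\to V_i$ is nonzero" because $V_n\neq 0$ (simple modules are nonzero by definition), which you have implicitly and correctly used.
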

Using this result, we deduce the following decomposition.
\begin{lemma}\label{L:21}
  Let $a,b\in\C$ such that $\wb{a+b}\notin\Xp$. Then
  $V^{\lambda^0_a}_ξ\otimes V^{\lambda^0_b}_ξ$ is a direct sum of
   typical modules with no multiplicity. Furthermore, $V^{\lambda^0_{a+b}}_ξ$ is a direct summand of this module.
\end{lemma}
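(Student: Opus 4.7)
The plan is to decompose $V^{\lambda^0_a}_\xi\otimes V^{\lambda^0_b}_\xi$ as a direct sum of typical envelopes indexed by partitions $\lambda\subset m\times n$ (via the Howe/Cauchy decomposition of $\Lambda(\g_{-1})$), and then to verify simplicity of each summand using the hypothesis $\overline{a+b}\notin\Xp$. The starting observation is that $V^{\lambda^0_a}_\xi$ is the typical envelope of the one-dimensional $U_\xi(\g_0)$-module of central weight $aw_m$: by PBW it has dimension $2^{mn}$ and $\g_0$-character $[aw_m]\operatorname{ch}\Lambda(\g_{-1})$, so the tensor product has dimension $4^{mn}$ and $\g_0$-character $[\lambda^0_{a+b}]\operatorname{ch}\Lambda(\g_{-1})^2$.

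Assuming a decomposition $\bigoplus_i V^{\lambda^0_{a+b}+\mu_i}_\xi$ with all $\lambda^0_{a+b}+\mu_i$ typical, each summand is itself a Kac module whose $\g_0$-character contains the factor $\operatorname{ch}\Lambda(\g_{-1})$; cancelling this common factor reduces matters to matching $\sum_i\operatorname{ch}V_0(\mu_i)=[\,\cdot\,]\operatorname{ch}\Lambda(\g_{-1})$, where $V_0(\mu)$ denotes the simple $U_\xi\g_0$-module of highest weight $\mu$. The classical Howe/Cauchy identity applied to $\g_{-1}\simeq\C^m\otimes(\C^n)^\ast$ gives
\[
\operatorname{ch}\Lambda(\g_{-1})=\sum_{\lambda\subset m\times n}\operatorname{ch}\bigl(V^{\sll_m}_\lambda\otimes V^{\sll_n}_{\lambda'}\bigr),
\]
so the multiset $\{\mu_i\}$ is forced to be the $\g_0$-highest weights $\mu_\lambda$ indexed by partitions fitting in an $m\times n$ rectangle, each with multiplicity one; this already accounts for the ``no multiplicity'' clause once the decomposition is produced.

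To realise the decomposition I would exhibit, for each partition $\lambda$, a $\g$-highest weight vector $w_\lambda\in V^{\lambda^0_a}_\xi\otimes V^{\lambda^0_b}_\xi$ of weight $\lambda^0_{a+b}+\mu_\lambda$: start from a $\g_0$-highest weight vector of the Cauchy summand $V^{\sll_m}_\lambda\otimes V^{\sll_n}_{\lambda'}$ inside $\Lambda(\g_{-1})^{\otimes 2}$ acting on $v_a\otimes v_b$, and verify annihilation by the odd generator $E_m$ using the coproduct $\Delta(E_m)=E_m\otimes 1+K_m^{-1}\otimes E_m$ together with $E_mv_a=E_mv_b=0$ and the trivial $U_\xi(\g_1)$-action on the generating subspaces of the two Kac modules. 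I would then verify typicality of each $\lambda^0_{a+b}+\mu_\lambda$ via Equation~\eqref{eq:typ'}: since $\mu_\lambda$ contributes only integer $c$-coordinates, the left-hand side of~\eqref{eq:typ'} takes the form $(a+b)+N_{i,j,\lambda}$ for some $N_{i,j,\lambda}\in\Z$, and $\ell$ odd yields $\tfrac\ell2\Z+\Z=\tfrac12\Z$; hence $\overline{a+b}\notin\Xp=\tfrac12\Z/\Z$ forces $(a+b)+N_{i,j,\lambda}\notin\tfrac\ell2\Z$. Each $w_\lambda$ therefore generates a simple typical envelope of dimension $2^{mn}\dim V_0(\mu_\lambda)=2^{mn}\dim V^{\sll_m}_\lambda\dim V^{\sll_n}_{\lambda'}$. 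Summing over $\lambda$ via the Cauchy identity $\sum_\lambda\dim V^{\sll_m}_\lambda\dim V^{\sll_n}_{\lambda'}=2^{mn}$ recovers the total dimension $4^{mn}$, so Lemma~\ref{L:3} delivers the direct-sum decomposition, with no multiplicity since distinct partitions produce distinct $\g$-highest weights. The empty-partition summand is $V^{\lambda^0_{a+b}}_\xi$, generated by $v_a\otimes v_b$.

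The principal technical obstacle is producing the vectors $w_\lambda$ compatibly with $E_m$-annihilation — equivalently, establishing at $\xi$ the well-known fact that tensor products of typical Kac modules decompose as direct sums of Kac modules. Once this is in hand, uniform typicality is immediate from the hypothesis $\overline{a+b}\notin\Xp$ together with $\ell$ odd, and everything else reduces to a dimension count.
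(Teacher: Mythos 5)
Your skeleton matches the paper's: both decompose the tensor product into typical envelopes indexed by partitions in an $m\times n$ rectangle, use $\wb{a+b}\notin\Xp$ for typicality of every summand, and close with a dimension count plus Lemma~\ref{L:3}. But the step you yourself flag as ``the principal technical obstacle'' --- exhibiting, directly at $q=\xi$, genuine $U^H_\xi(\g)$-highest weight vectors $w_\lambda$ --- is left unresolved, and it is exactly the step on which the whole argument turns. Note that it is not only the $E_m$-annihilation that is at issue: even extracting the $\g_0$-highest weight vectors of the Cauchy summands inside the tensor product requires knowing that the relevant $U_\xi\g_0$-structure behaves as in the generic case, which is not automatic at a root of unity. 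The paper sidesteps the construction entirely: it takes the known multiplicity-free decomposition of $V^{\lambda^0_a}_q\otimes V^{\lambda^0_b}_q$ at generic $q$ from \cite{GP2}, rescales its highest weight vectors so that they become primitive in the $A$-lattice $V^{\lambda^0_a}_q\otimes_A V^{\lambda^0_b}_q$ with $A=\Z[q^{\pm1},\qn{1}_q^{-1}]$, and observes that primitive vectors specialize at $q=\xi$ to \emph{nonzero} highest weight vectors with distinct weights. This lattice-specialization device is the missing idea; without it (or a substitute) your construction of the $w_\lambda$ remains a plan rather than a proof.

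There is a second, unflagged gap in your dimension count. You assert $\dim V_0(\mu_\lambda)=\dim V^{\sll_m}_\lambda\dim V^{\sll_n}_{\lambda'}$, i.e.\ that the simple $U_\xi\g_0$-module has the same dimension as its generic counterpart, and then invoke the Cauchy identity to recover $4^{mn}$. At a root of unity this equality can fail; it holds here only because each $\g_0$-weight lies in the alcove $C^\le$ --- the paper verifies $\brk{c_\lambda+\rho_0,\alpha_{L_1}}=\lambda_1-\lambda_m+m-1\le \rk\le\ell$ via Equation~\eqref{eq:alcoveyoung}, which is precisely where the standing hypothesis $\ell\ge\rk$ enters --- so that Theorem~\ref{T:Andersen} applies. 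Adding this alcove check, together with the lattice argument above, turns your outline into essentially the paper's proof.
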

\begin{proof}
  Let $\K$ be the
  field of fractions of $A=\Z[q^{\pm1},\qn1_q^{-1}]$ and consider the
  tensor product $V^{\lambda^0_a}_q\otimes_A
  V^{\lambda^0_b}_q$,
  which is a representation of
  the integral version of $U_q(\g)$ over the ring $A$. This is an
  $A$-lattice in the vector space
  $V^{\lambda^0_a}_q\otimes_A V^{\lambda^0_b}_q\otimes_A \K$. For this
  version, the semi-simple decomposition of
  $V^{\lambda^0_a}_q\otimes_A V^{\lambda^0_b}_q\otimes_A \K$ is known and
  given in \cite{GP2}.  More precisely, this a certain direct sum of
  typical modules which are typical envelopes of $\g_0$-modules. These
  $\g_0$ modules that occur in the decomposition, are indexed by the
  young diagrams
  bounded by a $m\times n$ rectangle.
  
  More specifically, the young diagrams
  are prescribed by
  $m$-tuples:
  $$\lambda=(\lambda_1\ge\lambda_2\ge\cdots\ge\lambda_m\ge0 ) \text {
    such that } \lambda_1\le n.$$ Let us fix $\lambda$. Then, let us
  construct the following two partitions:
\begin{equation}  
\begin{cases}
\text{ the complementary partition }\hat \lambda_i= n-\lambda_{m+1-i} \text{ for } i=1\cdots m\\
\text{ the conjugate partition of } \hat{\lambda} \text{ denoted by } \mu=(\mu_1,\ldots,\mu_n).  
\end{cases}
\end{equation}  
Then the highest weight of the $\g_0$ module associated to $\lambda$ is given by
  $$c_\lambda=(\lambda_1-\lambda_2,\ldots,\lambda_{m-1}-\lambda_m,a_\lambda,
  \mu_1-\mu_2,\ldots,\mu_{n-1}-\mu_n)$$
  where $a_\lambda=c_\lambda(H_m)=z-mn+\sum_i\lambda_i$.  In particular, using Equation~\eqref{eq:long-root} we have:
  \begin{equation}
    \label{eq:alcoveyoung}
    \brk{c_\lambda+\rho_0,\alpha_{L_1}}=\lambda_1-\lambda_m+m-1\text{ and
  }\brk{c_\lambda+\rho_0,\alpha_{L_2}}=\mu_1-\mu_n+n-1.
  \end{equation}
We notice that both scalar products are lower than $m+n-1= {\rk}$, so the weight $c_\lambda$ belongs to the alcove $ C^\le$.

Now, let us consider the highest weight vectors $\bp{v_\lambda^\K}_\lambda$ of
  these modules in $V^{\lambda^0_a}_q\otimes_A V^{\lambda^0_b}_q\otimes_A \K$.  We can
  rescale these vectors to get primitive vectors
  $\bp{v_\lambda^A}_\lambda$ in the $A$-lattice
  $V^{\lambda^0_a}_q\otimes_A V^{\lambda^0_b}_q$ and in turn, these
  vectors specialize at $q=ξ$ to non zero highest weight vectors
  associated to distinct weights.  This implies that
  $V^{\lambda^0_a}_ξ\otimes_\C V^{\lambda^0_b}_ξ$ contains the simple
  modules $(V^{c_\lambda}_ξ)_\lambda$.  The sum of their dimension is 
  $$\sum_\lambda\dim_\C\bp{V^{c_\lambda}_ξ}
  =\sum_\lambda\dim_\K\bp{V^{c_\lambda}_q}
  =\dim_A\bp{V^{\lambda^0_a}_q\otimes_A V^{\lambda^0_b}_q}
  =\dim_\C\bp{V^{\lambda^0_a}_ξ\otimes_\C V^{\lambda^0_b}_ξ}$$
  and the theorem follows from Lemma \ref{L:3}.
\end{proof}
In the next part, we will use the following definition.
\begin{definition}
Given an object $V \in \catp$, the ideal generated by $V$ in $\catp$, denoted by $\ideal_V$, is the full subcategory consisting of all retracts (i.e.\ direct summands) of objects of the form $V\otimes W$ where $W\in \catp$.  
\end{definition}
\begin{theorem}\label{C:22}
 \begin{enumerate}
\item If  $V^{\lambda^0_a}_ξ$ and $V^{\lambda^0_b}_ξ$  are typical modules with degree $\wb a, \wb b\in \Gp \setminus \Xp$ then they both
  generate the same ideal $\idealp$ in $\catp$. 
  \item There
  exists a unique (up to multiplication by an element of $\C^*$) m-trace $\mtp$ on $\idealp$.
  \item
  Any typical module $V_ξ^{\lambda_a^c}$ in $C^\le$  with degree $\wb a\in \Gp \setminus \Xp$ belongs to  $\idealp$, and has non-zero modified dimension  if and  only if it is in $C^<$.
  
\end{enumerate}
\end{theorem}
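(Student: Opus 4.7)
The plan is to treat the three assertions in sequence, with Lemma \ref{L:21} as the main tool.

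\textbf{Part (1)} is an immediate consequence of Lemma \ref{L:21}. Given $\bar a,\bar b\in\Gp\setminus\Xp$, applying the lemma to the pair $(a,b-a)$, whose sum $\bar b$ lies outside $\Xp$, exhibits $V^{\lambda^0_b}_\xi$ as a direct summand of $V^{\lambda^0_a}_\xi\otimes V^{\lambda^0_{b-a}}_\xi$. Hence $V^{\lambda^0_b}_\xi\in\ideal_{V^{\lambda^0_a}_\xi}$; by symmetry, the reverse inclusion holds, so both typical modules generate the same ideal, which we denote $\idealp$.

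\textbf{Part (2).} Since $V^{\lambda^0_a}_\xi$ is absolutely simple, any m-trace on $\idealp$ is determined by the scalar $\qd^\wp(V^{\lambda^0_a}_\xi)\in\C$, so uniqueness is automatic. For existence, my plan is to invoke the ambidextrous-object criterion of \cite{GKP1}: $\catp$ inherits the ribbon structure of $\catn$ (Corollary \ref{C:D-ribbon}) with pivotal element $K_{2\rho}$ (Remark \ref{R:piRho}), and the decomposition of $V^{\lambda^0_a}_\xi\otimes V^{\lambda^0_{-a}}_\xi$ into absolutely simple typical summands furnished by Lemma \ref{L:21} should suffice to verify that the left and right partial traces agree on $\End_\catp(V^{\lambda^0_a}_\xi\otimes V^{\lambda^0_a}_\xi)$. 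Alternatively, one can construct the m-trace by specialising the $h$-adic m-trace on typical modules from \cite{GP2}, checking that its normalisation remains finite and nonzero when $\bar a\notin\Xp$.

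\textbf{Part (3).} For the membership $V_\xi^{\lambda^c_a}\in\idealp$, I would show that $V_\xi^{\lambda^c_a}$ is a direct summand of $V^{\lambda^0_a}_\xi\otimes V^{\lambda^c_0}_\xi$ (the latter module lies in $\catp$ because $c\in\Z^{\rk-1}$). In the generic category, $V^{\lambda^c_a}_q$ is the highest-weight component of $V^{\lambda^0_a}_q\otimes V^{\lambda^c_0}_q$; since $\lambda^c_a$ is typical at $\xi$ and $\lambda^c_a\in C^\le$ (so the $\g_0$-content is controlled by Theorem \ref{T:Andersen}), this component survives the specialisation, and membership in $\idealp$ follows from the absorbing property since $V^{\lambda^0_a}_\xi\in\idealp$. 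For the vanishing dichotomy, adapting the Weyl--Kac computation of the modified dimension of a typical module (as in \cite{GP2}) to the root-of-unity setting should yield a formula of the shape
\[
\qd^\wp\bigl(V_\xi^{\lambda^c_a}\bigr)\;=\;\kappa\cdot\frac{\prod_{\alpha\in\roots^+_{\p 0}}\qn{\brk{\lambda^c_a+\rho,\alpha}}_\xi}{\prod_{\alpha\in\roots^+_{\p 1}}\qn{\brk{\lambda^c_a+\rho,\alpha}}_\xi},
\]
with nonzero constant $\kappa$ and denominator nonzero by typicality. For $\lambda^c_a$ perturbative each $\brk{\lambda^c_a+\rho,\alpha}$ in the numerator is an integer (no even positive root involves the odd simple root $\alpha_m$), so a factor $\qn{\cdot}_\xi$ vanishes iff some $\brk{\lambda^c_a+\rho,\alpha}\in\ell\Z$. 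By Remark \ref{R:positiveRootEv}, this does not happen when $\lambda^c_a\in C^<$ (all such values sit in $\{1,\dots,\ell-1\}$), and it always happens on the wall $\lambda^c_a\in C^\le\setminus C^<$ (one value equals $\pm\ell$), giving precisely the stated dichotomy.

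\textbf{Main obstacle.} The hardest step is establishing and justifying the explicit formula for $\qd^\wp$ above, and in particular controlling the normalisation $\kappa$ to ensure it remains nonzero at $q=\xi$. A subtler technical point is verifying that the tensor-product decomposition used for the membership step descends correctly to the specialisation when $\lambda^c_a$ sits on a wall of the alcove, since this is exactly where extra non-semisimple phenomena in $\catp$ may intervene.
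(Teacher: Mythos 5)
Your overall strategy --- Lemma \ref{L:21}, the ambidextrous-object machinery of \cite{GKP1}, and an $S'$/character computation combined with Remark \ref{R:positiveRootEv} for the vanishing dichotomy --- is the same as the paper's, and the outline of parts (1) and (3) is broadly right. But there is a genuine gap in part (2), precisely at the step you wave through with ``should suffice''. The ambidexterity criterion requires a multiplicity-free semisimple decomposition of $V^{\lambda^0_a}_\xi\otimes V^{\lambda^0_a}_\xi$ (not of $V^{\lambda^0_a}_\xi\otimes V^{\lambda^0_{-a}}_\xi$, to which Lemma \ref{L:21} never applies since $\overline{a+(-a)}=\bar 0\in\Xp$), and Lemma \ref{L:21} only provides such a decomposition when $\overline{2a}\notin\Xp$, i.e.\ $a\notin\frac14\Z$. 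For $a\in\frac14\Z\setminus\frac12\Z$ your argument for the ambidexterity of $V^{\lambda^0_a}_\xi$ therefore breaks down. The paper's fix is to introduce an auxiliary $z\in\C\setminus\frac14\Z$ with $a-z\notin\frac12\Z$: then $V^{\lambda^0_z}_\xi$ is ambidextrous, one obtains the unique m-trace on $\idealp:=\ideal_{V^{\lambda^0_z}_\xi}$, computes $\qdp(V^{\lambda^0_a}_\xi)\neq 0$ from the symmetry $\qdp(V)S'(W,V)=\qdp(W)S'(V,W)$ together with Lemma \ref{L:Sprime}, and only then concludes $\ideal_{V^{\lambda^0_a}_\xi}=\idealp$ via Theorem 4.2.1 of \cite{GKP1}. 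Some version of this detour is unavoidable.

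Two smaller points. In part (1) you apply Lemma \ref{L:21} to the pair $(a,b-a)$; when $b-a\in\frac12\Z$ the factor $V^{\lambda^0_{b-a}}_\xi$ is atypical, and the proof of Lemma \ref{L:21} (which rests on the generic decomposition from \cite{GP2} of a tensor product of \emph{typical} modules) does not cover that case --- again the auxiliary $z$ is how the paper avoids it. In part (3), ``the highest-weight component survives specialisation'' does not by itself make $V^{\lambda_a^c}_\xi$ a \emph{direct summand}: a highest-weight subquotient need not split off, and $V^{\lambda^c_0}_\xi$ is atypical, so none of the paper's decomposition lemmas control $V^{\lambda^0_a}_\xi\otimes V^{\lambda_0^c}_\xi$. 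Membership is obtained more cheaply from the non-vanishing of the open Hopf link $S'(V^{\lambda^0_z}_\xi,V^{\lambda_a^c}_\xi)$ (Lemma \ref{L:Sprime} plus typicality), which exhibits $V^{\lambda_a^c}_\xi$ as a retract of $(V^{\lambda^0_z}_\xi)^*\otimes V^{\lambda_a^c}_\xi\otimes V^{\lambda^0_z}_\xi$. Your formula for $\qdp$ and the $C^<$ versus $C^\le\setminus C^<$ analysis via Remark \ref{R:positiveRootEv} do match the paper's Equation \eqref{eq:qdp}, and that part of your argument is sound.
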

\begin{proof}

Let $a\in \C$ such that $\wb a \notin \Xp$.   There exist $z\in \C\setminus \frac14\Z$ 
such that $a-z \in \C\setminus \frac12\Z$. Set $b:=a-z$.  Then from Lemma \ref{L:21} we have that  $V^{\lambda^0_a}_ξ$ is a direct summand of $V^{\lambda^0_{z}}_ξ\otimes V^{\lambda^0_b}_ξ$.  Thus, $V^{\lambda^0_a}_ξ$ is in $\ideal_{V^{\lambda^0_z}_ξ}$.  Set $\idealp:=\ideal_{V^{\lambda^0_z}_ξ}$.

Thus, to prove the first two properties of the theorem it is enough to prove the following fact:
\begin{fact}\label{F:IdealsEqUniqueTrace}
The ideals  $\idealp$ and $\ideal_{V^{\lambda^0_a}_ξ}$ are equal and there is a unique trace on this ideal.  
\end{fact}
We prove this fact in several steps as follows.  Since
$z\in \C \setminus \frac14 \Z$ then Lemma \ref{L:21} implies
$V^{\lambda^0_z}_ξ\otimes V^{\lambda^0_z}_ξ$ is a direct sum of typical
modules with no multiplicity.  Then by Remark 3.3.5 in \cite{GKP1}
(which is a consequence of Lemma 3.3.4 in the same paper) we have that
$V^{\lambda^0_z}_ξ$ is \emph{ambidextrous}. Let $d_0$ be an element of
$\C$.  The general theory of \cite{GKP1} (in particular Corollary
3.3.3) implies that each simple ambidextrous object $V$ determines a
unique non-zero m-trace on the ideal $\ideal_{V}$ such that
the modified dimension of $V$ is $d_0$.  Denote the m-trace
corresponding to $V^{{\lambda^0_z}}_ξ$ by
$\mtp=\{\mtp_V\}_{V\in\idealp}$ and let
$\qdp=\{\qdp(V)=\mtp_V(\Id_V)\}_V$ be its associated modified dimension,
normalized with
$\qdp(V^{{\lambda^0_z}}_ξ)=d_0:=\prod_{\alpha\in\roots_{\p1}^+}\frac1{\qn{\brk{\lambda^0_z
      +\rho,\alpha}}}.$
For simple modules $V, W \in \idealp$, we consider the scalar
$S'(W,V)\in \C$ given in Equation \eqref{eq:Sprime}.  The properties
of the m-trace imply
$$
\qdp(V)S'(W,V)=\mtp_{V\otimes W}(c_{W,V}\circ c_{V,W}) =\mtp_{W\otimes
  V}(c_{V,W}\circ c_{W,V})=\qdp(W)S'(V,W).
$$
Using Lemma \ref{L:Sprime}, Remark \ref{R:piRho} and the fact that modules in the alcove
have the same character as their $U_q(\frakg)$ versions (see
\cite{GP2}), we then have for a typical weight $\lambda_a^c$ in the
alcove:
\begin{equation}
  \label{eq:qdp}
  \qdp\bp{V_\xi^{\lambda_a^c}}=
  d_0\frac{S'(V_\xi^{\lambda_a^c},V^{{\lambda^0_z}}_ξ)}
  {S'(V^{{\lambda^0_z}}_ξ,V_\xi^{\lambda_a^c})}=
  \prod_{\alpha\in\roots_{\p0}^+} \frac {\qn{\brk{\lambda_a^c+\rho,\alpha}}_ξ}
  {\qn{\brk{\rho,\alpha}}_ξ}
  \prod_{\alpha\in\roots_{\p1}^+}\frac 1{\qn{\brk{\lambda_a^c+\rho,\alpha}}}_ξ. 
\end{equation}
Remark that Equation \eqref{eq:typ} and $\ell\ge {\rk}$ imply that the denominators of $\qdp$ are non-zero complex numbers.  Moreover, this formula and Remark \ref{R:positiveRootEv} imply that $  \qdp(V_ξ^{\lambda_a^c}) $ is non-zero if $\lambda_a^c\in C^<$ and zero if $\lambda_a^c\in C^\le\setminus C^{<}$.  Thus, we have proved the last statement of the theorem.  

From Theorem 4.2.1 of \cite{GKP1} if $V\in \idealp$ and  $\qdp(V) \neq 0$ then $\idealp= \ideal_{V} $.  Since $V^{\lambda^0_a}_ξ\in C^<$, Fact \ref{F:IdealsEqUniqueTrace} follows.

\end{proof}
We will use later the following corollary:
\begin{corollary}\label{C:translate-V0}
  Let $a,b\in\C$ such that $\wb a, \wb b\notin \Xp$, then $V^{\lambda^0_a}_ξ$ is a
  direct summand of
  $V^{\lambda^0_a}_ξ\otimes V^{\lambda^0_b}_ξ\otimes
  (V^{\lambda^0_b}_ξ)^*$.
  Furthermore, we have that:
  $$(V^{\lambda^0_b}_ξ)^*\simeq
  V_ξ^{{2\rho_1+\lambda^0_{-b}}}\otimes\wb\C^{\otimes mn}.$$
\end{corollary}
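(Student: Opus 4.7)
The statement consists of an explicit description of the dual module and a direct summand property; I would prove the dual formula first and then use it to derive the summand claim.

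For the dual formula, since $\wb b\notin\Xp$, Remark \ref{R:perturbativeWeights} gives that $\lambda^0_b$ is typical, so $V^{\lambda^0_b}_\xi$ is the typical envelope of the one-dimensional $U_\xi(\g_0)$-module of highest weight $\lambda^0_b$, of complex dimension $2^{mn}$. Its lowest weight vector $v_-=\prod_{\alpha\in\roots_{\p1}^+}F_\alpha\cdot v_+$ has weight $\lambda^0_b-\sum_{\alpha\in\roots_{\p1}^+}\alpha=\lambda^0_b-2\rho_1$ and parity $\overline{mn}$ relative to the (even) highest weight vector $v_+$. Dualizing a simple module exchanges its highest and lowest weight (up to sign) and transports the parity, so $(V^{\lambda^0_b}_\xi)^*$ is simple with highest weight $-(\lambda^0_b-2\rho_1)=2\rho_1+\lambda^0_{-b}$ and highest weight vector of parity $\overline{mn}$. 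Noting that $2\rho_1+\lambda^0_{-b}=\lambda^0_{n-m-b}$ in $\h^*$ (they differ by $m\str$) and $\wb{n-m-b}=\wb{-b}\notin\Xp$, the module $V_\xi^{2\rho_1+\lambda^0_{-b}}$ is typical simple with an even highest weight vector; tensoring with $\wb\C^{\otimes mn}$ shifts its parity by $\overline{mn}$, producing a simple module with matching highest weight and parity, hence isomorphic to $(V^{\lambda^0_b}_\xi)^*$.

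For the direct summand property, using the dual formula it suffices to show $V^{\lambda^0_a}_\xi$ is a direct summand of $V^{\lambda^0_a}_\xi\otimes V^{\lambda^0_b}_\xi\otimes V_\xi^{\lambda^0_{n-m-b}}$ up to a parity twist. The tempting composition $(\Id\otimes\tev_{V^{\lambda^0_b}_\xi})\circ(\Id\otimes\coev_{V^{\lambda^0_b}_\xi})$ equals $\dim_\catp(V^{\lambda^0_b}_\xi)\Id$, which vanishes at the root of unity, so a more refined approach is needed. My plan is to decompose the tensor product by iterated applications of Lemma \ref{L:21}: at generic $q$ the product $V^{\lambda^0_a}_q\otimes V^{\lambda^0_b}_q\otimes V_q^{\lambda^0_{n-m-b}}$ decomposes as a direct sum of typical modules indexed by pairs of Young diagrams (see the proof of Lemma \ref{L:21} and \cite{GP2}), and $V^{\lambda^0_a}_q$ appears among the summands. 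I would transport this decomposition to $q=\xi$ stage by stage via Lemma \ref{L:21}; when a pairwise tensor product has degree in $\Xp$ (for instance $V^{\lambda^0_b}_\xi\otimes V_\xi^{\lambda^0_{n-m-b}}$, whose degree is $\wb 0$), I would insert an auxiliary typical pair $V^{\lambda^0_c}_\xi\otimes V_\xi^{\lambda^0_{-c}}$ with $c$ chosen so that all intermediate pairwise degrees now avoid $\Xp$, and then extract the retract in the original module from the enlarged one via the specialization of the generic idempotents.

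The main obstacle is the degeneracy $\dim_\catp(V^{\lambda^0_b}_\xi)=0$ at the root of unity, which prevents splitting off the trivial module from $V^{\lambda^0_b}_\xi\otimes(V^{\lambda^0_b}_\xi)^*$ by the standard duality; equivalently, Lemma \ref{L:21} breaks whenever the degree of a tensor product lies in $\Xp$, so its application must be threaded through intermediate generic typical modules. Navigating these obstructions while preserving the specific summand $V^{\lambda^0_a}_\xi$, and showing that the retract produced in the auxiliary tensor product descends to the desired one, is the technical core of the argument.
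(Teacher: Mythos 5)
Your proof of the isomorphism $(V^{\lambda^0_b}_ξ)^*\simeq V_ξ^{{2\rho_1+\lambda^0_{-b}}}\otimes\wb\C^{\otimes mn}$ is correct and is essentially the paper's argument (compare highest/lowest weight vectors and their parities; the identification $2\rho_1+\lambda^0_{-b}=\lambda^0_{n-m-b}$ via $2\rho_1=(n-m)w_m$ is also right). The problem is the first statement. You correctly diagnose the obstruction --- $\ev\circ\coev$ on $V^{\lambda^0_b}_ξ$ gives the vanishing quantum dimension, and Lemma \ref{L:21} is unavailable because $V^{\lambda^0_b}_ξ\otimes(V^{\lambda^0_b}_ξ)^*$ sits in degree $\wb 0\in\Xp$ --- but your proposed workaround is not an argument. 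Inserting an auxiliary pair $V^{\lambda^0_c}_ξ\otimes V^{\lambda^0_{-c}}_ξ$ tells you something about a retract of the \emph{enlarged} tensor product, and there is no general principle letting you descend from ``$V$ is a retract of $A\otimes B$'' to ``$V$ is a retract of $A$''; likewise ``specialization of the generic idempotents'' is precisely what fails at roots of unity, since the isotypic projectors at generic $q$ have denominators that can vanish at $ξ$. You name this as the technical core and leave it unresolved, so the retract claim is not proved.

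The missing idea is that the \emph{modified} dimension replaces the vanishing quantum dimension here. Since $\lambda^0_b$ lies in $C^<$, Theorem \ref{C:22}(3) gives $\qdp(V^{\lambda^0_b}_ξ)\neq 0$, and the general theory of m-traces (Theorem 1.4.1 of \cite{GKP1}, which is all the paper invokes) then produces explicit splitting morphisms exhibiting $V^{\lambda^0_a}_ξ$ as a retract of $V^{\lambda^0_a}_ξ\otimes V^{\lambda^0_b}_ξ\otimes(V^{\lambda^0_b}_ξ)^*$: the m-trace yields a non-degenerate pairing on the relevant Hom-spaces, and non-vanishing of $\qdp$ is exactly the condition that the duality-induced maps compose to a non-zero multiple of the identity. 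So the correct route is a one-line appeal to that categorical result, not a character-theoretic decomposition; your combinatorial machinery (Young diagrams, Pieri, Lemma \ref{L:21}) is not needed for this corollary and in the degree-$\wb 0$ situation cannot be made to work without the m-trace input.
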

\begin{proof}
  The first statement follows directly from Theorem 1.4.1 of
  \cite{GKP1}.  The second follows from looking at the highest weight
  vectors of the simple modules considered in the statement.  Indeed
  the lowest weight vector $v_-$ of $V^{\lambda^0_b}_ξ$ is obtained by
  acting with all odd root vectors on its highest weight vector.  The
  weight of $v_-$ is then the opposite of the one of the highest weight
  vector of $(V^{\lambda^0_b}_ξ)^*$ and they have the same parity.
\end{proof}

\section{The relative pre-modular category of perturbative modules}\label{S:perturbative}

\subsection{The subcategory $\catpb$}
In this subsection, we will construct a subcategory of $\catp$, generated by certain objects. We use the following definition.
\begin{definition}[Category generated by a set]
  If $A$ is a set of objects in $\catp$ then we define the category
  generated by $A$ as the full subcategory of $\catp$ which has as
  objects, all
  direct sums of retracts
  of all tensor products of the form:
$$X_{1}\otimes X_{2}\otimes ...\otimes X_{p}\quad\text{ where }X_i\in A\cup A^*.$$
\end{definition}
We will use the following modules to generate the desired subcategory.
\begin{definition}[Basic modules]\ 
\begin{enumerate}
\item Let $ \bar{\C}$ be the 1-dimensional odd module with trivial action.
\item Let $\mathsf{v}$ be the standard $m+n$ dimensional module in $\catp$.
\item Let $\ve$ be the 1-dimensional module where $H_m$ acts by
  $\ell$, all the $K_i$ act by $1$ and all other generators act by
  zero.
\end{enumerate}
\end{definition}
\begin{definition}[Subcategory generated by standard modules $\catpb$]\label{D:SubcatPB}
Let us consider the subcategory $\catpb$ of $\catp$ generated by the following set: 
$$\left\{\bar{\C}, \ve, \mathsf{v}, V^{\lambda^0_z}_ξ \; \Big| \; z\in \C \text{ with } \bar z\in (\C/\Z)\setminus\Xp  \right\}.$$
\end{definition}

\begin{remark}[m-trace]
  The m-trace $\mtp$ on $\idealp\subset
  \catp$
  defined in Theorem \ref{C:22}, induces a m-trace defined on the
  ideal $\ideal_{V^{\lambda^0_a}_ξ}$ in $\catpb$, where
  $\wb a\in \Gp \setminus \Xp$.  We still denote this trace by $\mtp$
  and the ideal by $\idealpb=\ideal_{V^{\lambda^0_a}_ξ}$.
\end{remark}
\begin{definition}\label{D:4}\
  \begin{enumerate}
  \item A module is \emph{semi-simple typical} if it is a direct sum of 
typical modules $V^{\lambda}_ξ$ or $V^{\lambda}_ξ\otimes\bar{\C}$  where $\lambda \in C^\le$.
\item An object $V\in \catpb$ is said to be \emph{negligible} if
$V\in \idealpb $ and $\mtp_V(f)=0$ for all $f\in \End_{\catpb}(V)$.  
  \end{enumerate}
\end{definition}
We have the following two easy lemmas.
\begin{lemma}\label{L:NegIdeal}
The set of negligible objects form an ideal.  
\end{lemma}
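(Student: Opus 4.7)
The plan is to verify directly the two defining properties of an ideal for the class $\mathcal{N}$ of negligible objects: closure under retracts and absorbency under tensor product. Both follow from the cyclicity and partial-trace properties of $\mtp$, together with the fact that $\idealpb$ is itself an ideal.

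First I would handle closure under retracts. Suppose $W \in \mathcal{N}$ and $V \in \catpb$ admits morphisms $\alpha\colon V\to W$, $\beta\colon W\to V$ with $\beta\alpha = \Id_V$. Because $\idealpb$ is an ideal and $W\in\idealpb$, we already have $V\in\idealpb$. For any $f\in\End_{\catpb}(V)$, the cyclicity of $\mtp$ gives
\[
\mtp_V(f)=\mtp_V(\beta\alpha f)=\mtp_W(\alpha f\beta)=0,
\]
the last equality since $\alpha f\beta\in\End_{\catpb}(W)$ and $W$ is negligible. Hence $V\in\mathcal{N}$.

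Next I would handle absorbency under tensor product. Suppose $U\in\mathcal{N}$ and $V\in\catpb$. Since $\idealpb$ is absorbing, $U\otimes V\in\idealpb$. For any $f\in\End_{\catpb}(U\otimes V)$, the partial-trace property of $\mtp$ yields
\[
\mtp_{U\otimes V}(f)=\mtp_U\bigl(\ptr_V(f)\bigr)=0,
\]
because $\ptr_V(f)\in\End_{\catpb}(U)$ and $U$ is negligible. For absorbency on the left, $V\otimes U$, one uses the analogous left partial trace; since $\catpb$ inherits the ribbon (hence pivotal) structure from $\catn$, left and right ideal conditions coincide (as already remarked in Section~\ref{SS:RelModDef}), so this case is automatic.

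There is no real obstacle here: the statement is essentially a formal consequence of the axioms of an m-trace. The only mild point to be careful about is that one stays inside $\catpb$ when applying cyclicity and partial trace, which is immediate since $\catpb$ is a full subcategory and the ambient ribbon structure restricts.
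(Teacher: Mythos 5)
Your proof is correct and follows essentially the same route as the paper's: cyclicity of $\mtp$ handles closure under retracts, and the partial-trace property handles tensor absorption. The only (harmless) additions are your explicit remark that $\idealpb$ being an ideal guarantees membership in $\idealpb$, and the observation that left and right absorption coincide in a ribbon category.
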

\begin{proof} Let $V,W\in \catp$ with $V$ negligible.  The partial trace property implies that  $V\otimes W$ is  negligible.  On the other hand, suppose $W$ is a retract of $V$, i.e. there exists  $f:W\to V$ and $g:V\to W$ such that $fg=\Id_W$.  If $h\in \End_\catp(W)$ then $$\mtp_W(h) =\mtp_W(\Id_W h) =\mtp_W(fg h) =\mtp_V(ghf)=0.$$
\end{proof}
\begin{lemma} \label{L:simpleNeg} If $V\in\idealpb$ such that $V$ is
  simple and $\qdp(V)=0$ then $V$ is negligible.
\end{lemma}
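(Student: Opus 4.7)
The statement is essentially a one-line consequence of Schur's lemma once one has the definitions straight, so the plan is short.

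First I would observe that because $V$ lies in $\catpb\subset\catp$, it is a finite-dimensional $U_ξ^H(\g)$-module, and simplicity together with the fact that we work over the algebraically closed field $\C$ gives $\End_{\catpb}(V)=\C\,\Id_V$ by the usual Schur argument (any endomorphism has an eigenvalue $\lambda$, and $f-\lambda\Id_V$ is a non-injective endomorphism of a simple object, hence zero). A minor care is needed because $\catpb$ is a category of super-modules whose morphisms respect parity, but this only strengthens the conclusion: an even equivariant endomorphism of a simple object is a scalar.

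Next I would invoke the $\C$-linearity of the m-trace $\mtp_V\colon\End_{\catpb}(V)\to\C$. Given any $f\in\End_{\catpb}(V)$, write $f=\lambda\,\Id_V$ for some $\lambda\in\C$; then
\begin{equation*}
\mtp_V(f)=\lambda\,\mtp_V(\Id_V)=\lambda\,\qdp(V)=0,
\end{equation*}
since by hypothesis $\qdp(V)=0$. Because $V\in\idealpb$ and $\mtp_V$ vanishes identically on $\End_{\catpb}(V)$, this is exactly the definition of $V$ being negligible (Definition \ref{D:4}(2)).

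There is no real obstacle: the only thing to double-check is that $V$ being simple in $\catpb$ (a full subcategory of $\catp$) really forces $\End_{\catpb}(V)=\C\,\Id_V$, which is immediate since $\End_{\catpb}(V)=\End_{\catp}(V)$ by fullness and $V$ is simple as a $U_ξ^H(\g)$-module.
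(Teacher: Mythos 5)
Your proof is correct and follows the same route as the paper: reduce to $\End(V)=\C\,\Id_V$, then use linearity of $\mtp$ and $\qdp(V)=\mtp_V(\Id_V)=0$. The only cosmetic difference is that the paper's notion of ``simple'' is already defined as $\End_\catc(V)=\kk\,\Id_V$, so your Schur-lemma digression is not needed, though it does no harm.
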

\begin{proof}
  Since $V$ is simple, $\End_{\catp}(V)=\C \Id_V$ and if
  $f\in \End_{\catp}(V)$ we will define the scalar $\brk{f}\in \C$ as the
  solution to the equation $f=\brk{f} \Id_V$.  If $\qdp(V)=0$ then by definition $\mtp_V(\Id_V)=0$.  Thus,
$$\mtp_V(f)=\mtp_V(\brk{f} \Id_V)=\brk{f} \mtp_V(\Id_V)=0.$$
\end{proof}
\begin{theorem}[Structural description of $\catpb$]\label{T:1}
The $\Gp=\C/\Z$ grading on the category $\catp$ induces a $\Gp$-grading on $\catpb$.
If $a\in \C$ with $\wb{a}\notin \Xp$, then  the objects of $\catpb_{\wb{a}}$ are modules of the form
\begin{equation}\label{E:RTN}
T\oplus  N
\end{equation}
where $T$ is semi-simple typical, $N$ is negligible and both are homogeneous with grading $\wb a$.  Moreover, every  typical module $V_ξ^{\w ac}$ in the alcove $C^\le$  is an object of $\catpb$.
\end{theorem}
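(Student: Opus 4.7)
The plan is to tackle the three assertions of the theorem in order, beginning with the $\Gp$-grading. I would verify directly that each generator of $\catpb$ is homogeneous for the $\Gp$-grading of $\catp$: the one-dimensional modules $\bar\C$ and $\ve$ lie in $\catpb_{\bar 0}$ because $K_m^\ell$ acts by $1$ on them (using $\xi^\ell=1$ for $\ve$); the standard module $\mathsf{v}$ also lies in $\catpb_{\bar 0}$ since all its weights satisfy $\epsilon_i(H_m),\delta_j(H_m)\in\Z$; and $V^{\lambda^0_z}_\xi$ lies in $\catpb_{\bar z}$ because its weights differ from $zw_m$ by elements of $\Lambda_R$, on which $H_m$ takes integer values. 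Compatibility of the grading with tensor products, duals and retracts then yields the induced $\Gp$-grading on $\catpb$.

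For the decomposition, I would first observe that any $V\in \catpb_{\bar a}$ with $\bar a\notin\Xp$ is a retract of a tensor product of generators, and since $\bar a\neq\bar 0$ while $\bar\C$, $\ve$, $\mathsf{v}$ all have grading $\bar 0$, this tensor product must contain at least one factor $V^{\lambda^0_z}_\xi$ or its dual (the latter being a typical module up to parity by Corollary \ref{C:translate-V0}), so $V$ lies in $\idealpb$. The typicality criterion of Remark \ref{R:perturbativeWeights} then guarantees that every perturbative weight $\mu$ occurring in a simple of $\catp_{\bar a}$ is typical, since $\mu(H_m)\in a+\Z$ with $a\notin\frac12\Z$. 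I would then classify the indecomposable summands of $V$: those isomorphic to typical modules $V^{\lambda}_\xi$ with $\lambda\in C^\le$ (possibly tensored with $\bar\C$) assemble into the semi-simple typical part $T$; the remaining indecomposables have $\qdp=0$ by a generalisation of the dimension formula \eqref{eq:qdp} (outside $C^\le$ some even-root bracket $\qn{\brk{\lambda+\rho,\alpha}}_\xi$ in the numerator vanishes), and by Lemma \ref{L:simpleNeg} they are negligible, assembling into $N$. A key technical point is to verify that typical summands split off as direct summands of $V$, which should follow from their projectivity in $\catpb$ as an offshoot of the ambidexterity and m-trace properties of $V^{\lambda^0_z}_\xi$ established in the proof of Theorem \ref{C:22}.

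For the final assertion, I would realise each typical $V^{\w ac}_\xi\in C^\le$ as a direct summand of an explicit tensor product of generators. Typicality of an alcove perturbative weight (with $c\in\N^{r-1}$) forces $a\notin\frac12\Z$, so $\bar a\notin\Xp$. Pick $b\in\C$ with $\bar b$ and $\overline{a-b}$ both outside $\Xp$; Lemma \ref{L:21} then decomposes $V^{\lambda^0_{a-b}}_\xi\otimes V^{\lambda^0_b}_\xi$ into a direct sum of typical modules indexed by Young diagrams bounded in the $m\times n$ rectangle. For alcove weights whose $c$-coordinates exceed this bound I would iterate the fusion by tensoring with further $V^{\lambda^0_\ast}_\xi$'s and use Pieri-type decompositions of $V^\mu_\xi\otimes\mathsf{v}$ to increment individual $c_i$'s, with tensor products by $\ve$ and $\bar\C$ handling $w_m$-shift and parity adjustments. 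The principal obstacle lies in paragraph two: verifying simultaneously that typical summands split off and that non-alcove indecomposables in $\catpb_{\bar a}$ have vanishing $\qdp$. A secondary combinatorial obstacle in paragraph three is ensuring that the iterated fusion together with Pieri moves exhausts every weight of $C^\le$, including those near the boundary of the alcove.
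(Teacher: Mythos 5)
Your treatment of the grading (checking each generator is homogeneous) and of the last assertion (realising an alcove typical as a summand of $V^{\lambda^0_{a+t}}_ξ\otimes\mathsf{v}^{\otimes s}\otimes\bar\C^{\otimes u}$ by iterated Pieri moves) matches the paper, which proves the latter as its Fact \ref{F:TypRet}. The problem is your second paragraph, which is where the real content of the theorem lies, and where your route has a genuine gap. You propose to take an arbitrary object of $\catpb_{\wb a}$, decompose it into indecomposables, keep the alcove typicals as $T$, and declare the rest negligible because ``$\qdp=0$ by a generalisation of \eqref{eq:qdp}'' plus Lemma \ref{L:simpleNeg}. This fails on two counts. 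First, Lemma \ref{L:simpleNeg} applies only to \emph{simple} objects; the indecomposable summands you need to discard need not be simple (typical envelopes of non-simple $U_ξ(\g_0)$-modules with highest weight outside the alcove occur in these tensor products), and for a non-simple indecomposable $N_0$ with $\qdp(N_0)=0$ you would still have to show $\mtp_{N_0}$ vanishes on the radical of the local ring $\End(N_0)$ --- that is essentially the statement being proved, not something you get for free. Second, formula \eqref{eq:qdp} is derived from $S'$ via the super-character, and only modules with highest weight in $C^\le$ are known to retain the generic character (Theorem \ref{T:Andersen}); outside the alcove you have no control over characters, so the claimed ``generalisation'' is not available. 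The appeal to ``projectivity of typicals in $\catpb$'' to split them off is likewise not established anywhere (membership in the ideal $\idealpb$ is not projectivity), though Krull--Schmidt makes that particular point less critical.

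The paper avoids classifying indecomposables altogether. It proves three closure statements --- Fact \ref{F:Vlambdaa} ($-\otimes\mathsf{v}$ preserves the form $T\oplus N$, via Pieri and Lemma \ref{L:3}, all summands landing back in $C^\le$ by Equation \eqref{eq:alcoveyoung}), Fact \ref{F:TypRet}, and Fact \ref{F:TensorVaVb} ($V^{\lambda^0_b}_ξ\otimes-$ preserves the form, via Lemma \ref{L:21} and Fact \ref{F:Vlambdaa}) --- and then runs an induction on the number of factors $V^{\lambda^0_{a_i}}_ξ$ in a generating tensor product. The induction step inserts a pair $V^{\lambda^0_z}_ξ\otimes(V^{\lambda^0_z}_ξ)^*$ using Corollary \ref{C:translate-V0} so that the product can be re-bracketed as successive applications of Fact \ref{F:TensorVaVb} with all intermediate degrees outside $\Xp$. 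The one place where a retract of $T\oplus N$ must be split into a retract of $T$ plus a retract of $N$ is handled by Lemma 5.2.2 of \cite{AG17}. If you want to salvage your approach you would need, at minimum, a proof that every indecomposable summand of a generator tensor product in degree $\wb a\notin\Xp$ is either an alcove typical or an object on which the m-trace vanishes identically; the paper's inductive bookkeeping is precisely the mechanism that guarantees this.
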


\begin{proof}

To prove the theorem we will use the the following three facts, which will be proved below.  In what follows, $a,b$ will be elements of $ \C$ and $\wb a, \wb b $ will represent their classes in $\C/\Z$.
\begin{fact}\label{F:Vlambdaa}
  If $V\in \catp$ is a module of the form given in Equation \eqref{E:RTN} then
  $V\otimes \mathsf{v}$
  and $V\otimes \mathsf{v}^*$ are also of this form.
\end{fact}
\begin{fact}\label{F:TypRet}
Let $V_ξ^{\w ac}\in \catp_{\wb a}$ be a typical module in the alcove $C^\le$ for $a\in \C$ with $\wb a\notin  \Xp$.
   Then $V_ξ^{\w ac}$ is a direct summand of
  $V^{\lambda^0_{a+t}}_ξ \otimes \mathsf{v}^{\otimes s}\otimes
  \bar{\C}^{\otimes u}$ for some  $(s,t,u)\in \N\times\Z\times\{0,1\}$.
\end{fact}
\begin{fact}\label{F:TensorVaVb}
If $V\in \catp_{\wb a}$ is a module of the form given in Equation \eqref{E:RTN} and if $V^{\lambda^0_b}_ξ\in \catp_{\wb b}$ such that $\wb a+\wb b\notin \Xp $ then $V^{\lambda^0_b}_ξ\otimes V$ is also of form given in Equation~\eqref{E:RTN}.
\end{fact}
Now notice that Fact \ref{F:TypRet} implies that every typical module
in the alcove with degree outside of $\Xp $ is
in $\catpb$.

On the other hand, let  
$$
V^{\lambda^0_{a_1}}_ξ \otimes V^{\lambda^0_{a_2}}_ξ \otimes...\otimes V^{\lambda^0_{a_k}}_ξ 
\otimes \mathsf{v}^{\otimes s}\otimes (\mathsf{v}^*)^{\otimes s'}\otimes \bar{\C}^{\otimes t}\otimes \ve^{\otimes u}
$$
be a module in $\catpb_{\wb a}$ where $s,s',t, u\in \N$ and $a_i\in \C$
such that $\wb a:=\sum_{i=1}^k \wb a_i \notin \Xp$.  To prove the
theorem we need to show that this module has the form given in
Equation \eqref{E:RTN}.
First, remark that Lemma \ref{L:NegIdeal} ensures that duals and
retracts of a semi-simple typical (resp. negligible) are semi-simple
typical (resp. negligible).  We now use induction on $k$.  Since
$V^{\lambda^0_{a_1}}_ξ \otimes \bar{\C}^{\otimes t}\otimes
\ve^{\otimes u}$ is of the form given in Equation \eqref{E:RTN} then
Fact \ref{F:Vlambdaa} implies
$V^{\lambda^0_{a_1}}_ξ \otimes W$ is also of this form where we set
$$W=\mathsf{v}^{\otimes s}\otimes
(\mathsf{v}^*)^{\otimes s'}\otimes \bar{\C}^{\otimes t}\otimes
\ve^{\otimes u}.$$  This proves the case $k=1$.
 
 Now, let us suppose that the statement is true for $k-1$.
  Then, there exists $z\in\C$
  such that $\wb z\not\in\Xp$ and $\sum_{i=j}^k \wb a_i+\wb z\not\in\Xp$ for
$j\in \{1,2,3,k\}$.  Now, Corollary
 \ref{C:translate-V0} implies $V^{\lambda^0_{a_k}}_ξ$
 is a direct summand of $V^{\lambda^0_{a_k}}_ξ\otimes
 V^{\lambda^0_{z}}_ξ\otimes V_ξ^{{2\rho_{\p1}+\lambda^0_{-z}}}\otimes\wb\C^{\otimes mn}$ and so it follows that $
 V^{\lambda^0_{a_1}}_ξ \otimes V^{\lambda^0_{a_2}}_ξ \otimes...\otimes
 V^{\lambda^0_{a_k}}_ξ \otimes W\otimes\wb\C^{\otimes mn}
 $ is a direct summand of 
 \begin{equation}\label{E:Vbaaab}
 V^{\lambda^0_{z'}}_ξ\otimes
 V^{\lambda^0_{a_1}}_ξ \otimes V^{\lambda^0_{a_2}}_ξ \otimes...\otimes
 V^{\lambda^0_{a_k}}_ξ \otimes V^{\lambda^0_{z}}_ξ \otimes
 W\otimes\wb\C^{\otimes mn}
\end{equation} 
where $z'=-z-(m-n)$ as $2\rho_{\p1}=-(m-n)w_m$. We will show the module in Equation \eqref{E:Vbaaab} is of the form given in Equation \eqref{E:RTN}.  

This will conclude the induction step because
direct summands of modules of the form $T\oplus  N$ are also of this form.  The choice of $z$ guaranties that $\sum_{i=3}^k \wb a_i+\wb z\not\in\Xp$ and the induction hypothesis says 
$$
V^{\lambda^0_{a_3}}_ξ \otimes V^{\lambda^0_{a_4}}_ξ \otimes...\otimes
V^{\lambda^0_{a_k}}_ξ \otimes V^{\lambda^0_{z}}_ξ
\otimes W\otimes\wb\C^{\otimes mn}
$$
 is of the form given in Equation \eqref{E:RTN}.
Then 
   Fact \ref{F:TensorVaVb} with $a=\sum_{i=3}^k a_i+z$ and $b=a_2$ implies that 
   $$
V^{\lambda^0_{a_2}}_ξ \otimes V^{\lambda^0_{a_3}}_ξ \otimes...\otimes
 V^{\lambda^0_{a_k}}_ξ \otimes V^{\lambda^0_{z}}_ξ \otimes
W \otimes\wb\C^{\otimes mn}  $$
    is of the form given in Equation \eqref{E:RTN}.
    Similarly,  applying Fact \ref{F:TensorVaVb} two more times (with $b=a_1$ and $b=z'$, respectively) we see the module of Equation \eqref{E:Vbaaab} is of the  form given in Equation \eqref{E:RTN}.

In the next part, we will prove three main facts listed above.  

{\bf Fact \ref{F:Vlambdaa} } To prove the first fact,
since $V\otimes\mathsf{v}^*\simeq (V^*\otimes\mathsf{v})^*$,
it suffices to show that $V^{\lambda_{a}^c}_ξ\otimes \mathsf{v}$ is of
the desired form where $V^{\lambda_{a}^c}_ξ$ is a typical module in
the alcove.  Since $V^{\lambda_{a}^c}_ξ$ is simple it corresponds to a
typical module $V$ of the Lie superalgebra $\slmn$ (when $q$ is
generic, the module $V$ deforms to a simple $U_{q}\slmn$-module
$V_q$).  Recall that a typical $\slmn$-module $V$ is determined by a
triple $(\lambda_m, \lambda_n,a)$ where $\lambda_m$ (resp.\
$\lambda_n$) is a Young diagram corresponds to $\sll(m)$ (resp.\
$\sll(n)$) and $a$ is a complex number.
Indeed the
character of $V$ has the form
$\chi_1z^as_{\lambda_m}(x)s_{\lambda_n}(y)$ where the $s$ are the
Schur polynomials, the variables $x$ correspond to $\sll(m)$,
variables $y$ correspond to $\sll(n)$ and $z=\prod_i x_i=\prod_j y_j$
correspond to $w_m$ and can have complex exponent.  Also, the standard
$m+n$-dimensional module $ \mathsf{v}$ has the character
$s_1(x)+s_1(y)$ where $s_1$ is the Schur polynomial associated to the
one box Young diagram (this polynomial is just the sum of variables).
Thus the character of $V\otimes \mathsf{v}$ is
$\chi_1z^as_{\lambda_m}(x)s_{\lambda_n}(y)(s_1(x)+s_1(y))$ which
decomposes by Pieri's formula as the sum of the characters of typical
representations corresponding to triples of two Young diagrams and a
complex number where a single box is added to
$(\lambda_m, \lambda_n)$.  When $q$ is generic, this splitting
describes the decomposition of $V_q\otimes \mathsf{v}$ into a
multiplicity free direct sum of typical $U_{q}\slmn$-modules.  So
$V^{\lambda_{a}^c}_ξ\otimes \mathsf{v}$ contains highest weight
submodules corresponding to adding a single box to
$(\lambda_m, \lambda_n)$ and possibly adding an integer to $a$.  Adding
a box corresponds to increasing the sum of the weights by at most 1
(see Equation \eqref{eq:alcoveyoung}). Therefore each of these
submodules is in the alcove or on its boundary (as
$V^{\lambda_{a}^c}_ξ$ was in the alcove $C^<$).  Since typical modules in
the alcove or on its boundary remain simple after specialisation, we
obtain a set of simple modules which are pairwise non-isomorphic.

Moreover, from the Lie superalgebra decomposition we know that the sum of
their vector space dimensions adds up to the dimension of
$V^{\lambda_{a}^c}_ξ\otimes \mathsf{v}$.  Thus, by Lemma \ref{L:3} we have $V^{\lambda_{a}^c}_ξ\otimes \mathsf{v}$ is a direct sum
of these simple modules and we proved Fact \ref{F:Vlambdaa}.
   
To prove {\bf Fact \ref{F:TypRet}} we continue our discussion about
Young diagram corresponding to $V^{\lambda_a^c}_ξ$ be as above.  The
typical module $V^{\lambda^0_{a}}_ξ$ corresponds to the two empty
Young diagram and the complex number $a$.  Since taking tensor product
with $\mathsf{v}$ corresponds to a direct sum decompositions
determined by all possible ways of adding a single box, we see that we can
iterate this process and obtain the Young diagram corresponding to
$V^{\lambda_a^c}_ξ$.  Then, Fact \ref{F:Vlambdaa} implies that each
iteration stays of the form given in Equation \eqref{E:RTN} and the
result follows.
      
To prove {\bf Fact \ref{F:TensorVaVb}}, we consider $V\in \catpb_{\wb a}$, $V=T \oplus N$, as in Equation \eqref{E:RTN}. Let $V^{\lambda^0_b}_ξ\in \catpb_{\wb b}$ such that $\wb a+\wb b\notin \Xp $. We have:
 $$V^{\lambda^0_b}_ξ \otimes V= \left( V^{\lambda^0_b}_ξ \otimes T \right) \oplus \left(  V^{\lambda^0_b}_ξ \otimes N \right).$$
 Using Lemma \ref{L:NegIdeal}, we have that
 $V^{\lambda^0_b}_ξ \otimes N$ is negligible. Then, we need to prove
 that $V^{\lambda^0_b}_ξ \otimes T$ has the form from Equation
 \eqref{E:RTN}. Having in mind the form of semi-simple typical modules
 from Definition \ref{D:4}, it suffices to prove that
 $ V^{\lambda^0_b}_ξ\otimes V^{\lambda_{a}^c}_ξ $ has the form from
 Equation \eqref{E:RTN}, where $V^{\lambda_a^c}_ξ $ is any typical
 module in $\catp_{\wb a}$.  From Fact \ref{F:TypRet} we know
 $V^{\lambda_a^c}_ξ $ is a direct summand of
 \newcommand{\cg}{}
 $V^{\lambda^0_{a \cg +t}}_ξ \otimes W$ where $W=\mathsf{v}^{\otimes s}\otimes (\mathsf{v}^*)^{\otimes s'}\otimes \bar\C^{\otimes u}$ for some
 $s,s',t,u\in \N$.  
 
 Moreover, Lemma \ref {L:21} implies
 $V^{\lambda^0_b}_ξ \otimes V^{\lambda^0_{a\cg+t}}_ξ$ is semi-simple
 typical. Applying Fact \ref{F:Vlambdaa}, $s+s'$ times, we conclude that
 $V^{\lambda^0_b}_ξ \otimes V^{\lambda^0_{a\cg+t}}_ξ \otimes
 W$ has the form from Equation \eqref{E:RTN}:
\begin{equation}\label{eq:4}
  V^{\lambda^0_b}_ξ \otimes V^{\lambda^0_{a\cg+t}}_ξ\otimes W
  =T \oplus N .
\end{equation}
But $V^{\lambda^0_b}_ξ \otimes V^{\lambda_a^c}_ξ$ is a retract of the module in Equation \eqref{eq:4}. Then Lemma 5.2.2 of \cite{AG17}, implies there exist two retracts $T' \subseteq T$ and $N' \subseteq N$ such that 
$$V^{\lambda^0_b}_ξ\otimes V^{\lambda_a^c}_ξ=T' \oplus N'.$$
Since $N$ is negligible and $N'$ is a retract of it, Lemmas \ref{L:NegIdeal} implies that $N'$ is also negligible.
Thus,  $ V^{\lambda^0_b}_ξ\otimes V^{\lambda_a^c}_ξ$ has the desired form.  
\end{proof}
\subsection{Construction of $\catN$: the quotient of $\catpb$ by negligible morphisms}
\begin{definition}[Negligible morphisms]\
  \begin{enumerate}
  \item We say that a morphism $f:V\to W$ between two objects of 
    $\catpb$ is \emph{negligible} if 
    there exists a negligible object
    $X$ and morphisms $g: X \rightarrow W$ and $h: V\to X$ such that
    $f=g h$.
  \item Let $\Negl(V,W) $ be the sub-set of $\Hom_{\catpb}(V,W)$
    formed by all negligible morphisms from $V$ to $W$.
  \end{enumerate}
\end{definition}
 \begin{lemma} Let $V,V',W,W'\in\catpb$. Let $f\in \Negl(V,W)$, $k\in \Negl(W,V')$, $g\in \Hom_{\catpb}(V',V)$ and $h\in \Hom_{\catpb}(V',W')$.  Then
   \begin{enumerate}
   \item $\Negl(V,W)$ is a sub-vector space of $\Hom_{\catpb}(V,W)$, 
   \item $f \circ g\in  \Negl(V',W)$ and $g\circ k\in \Negl(W,V)$,
   \item $f\otimes h\in \Negl(V\otimes
     V',W\otimes W')$ and $ h\otimes f\in \Negl(V'\otimes
     V,W'\otimes W)$.  
   \end{enumerate}
 \end{lemma}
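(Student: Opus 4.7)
The final lemma has three parts, all assertions about how $\Negl$ behaves under the basic operations of the category. My plan is to reduce each part to two closure properties of the class of negligible \emph{objects}, and then simply exhibit factorizations.

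First I will establish two auxiliary claims. Claim (a): a finite direct sum $X_1 \oplus X_2$ of negligible objects is negligible. Ideal membership is immediate since $\idealpb$ is tensor-absorbing and retract-closed (Lemma~\ref{L:NegIdeal}), and $X_1 \oplus X_2$ is a retract of $(X_1 \oplus X_2) \otimes \unit$, whose summands $X_i \otimes \unit \simeq X_i$ lie in $\idealpb$. Vanishing of the m-trace on any $\phi \in \End(X_1 \oplus X_2)$ follows from cyclicity applied to the biproduct data $(\iota_i, p_i)$: one obtains $\mtp_{X_1 \oplus X_2}(\phi) = \sum_{i} \mtp_{X_i}(p_i \phi \iota_i) = 0$, each term being zero by negligibility of $X_i$. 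Claim (b): for any negligible $X$ and any $Y \in \catpb$, the object $X \otimes Y$ is negligible. Indeed $X \otimes Y \in \idealpb$ by absorbency, and the partial trace property of $\mtp$ yields $\mtp_{X \otimes Y}(\phi) = \mtp_X(\ptr_Y(\phi)) = 0$, since $\ptr_Y(\phi) \in \End(X)$ and $X$ is negligible.

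Granted these, part (1) follows easily. A scalar multiple of a negligible morphism $f = \beta \circ \alpha$ is of the form $(\lambda \beta) \circ \alpha$, hence still negligible. Given two negligible morphisms $f_i = \beta_i \circ \alpha_i$ factoring through negligibles $X_i$ ($i = 1, 2$), I will write $f_1 + f_2$ as the composition $V \to X_1 \oplus X_2 \to W$, where the first map has components $\alpha_1, \alpha_2$ and the second has components $\beta_1, \beta_2$; the middle object is negligible by claim (a). Part (2) is a matter of reassociating the factorization: if $f = \beta \circ \alpha$ factors through a negligible $X$, then $f \circ g = \beta \circ (\alpha \circ g)$ factors through the same $X$, and if $k$ factors through $X'$ then $g \circ k = (g \circ \beta') \circ \alpha'$ factors through $X'$ as well. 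Part (3) is similar: if $f = \beta \circ \alpha$ factors through a negligible $X$, then $f \otimes h = (\beta \otimes h) \circ (\alpha \otimes \id_{V'})$ factors through $X \otimes V'$, which is negligible by claim (b); the variant $h \otimes f$ is symmetric.

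The only non-routine step is claim (a): the paper has shown only that negligibles form an \emph{ideal}, so direct-sum closure—specifically, the additivity of the m-trace under biproducts—must be verified from the cyclicity property. This is the main obstacle, though it is a standard manipulation once one observes $\sum_i \iota_i p_i = \id_{X_1 \oplus X_2}$ and $p_i \iota_j = \delta_{ij} \id_{X_i}$. Once claims (a) and (b) are in hand, the three parts of the lemma are purely formal.
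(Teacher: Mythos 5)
Your proof is correct and follows essentially the same route as the paper's: linear combinations factor through the direct sum $X_1\oplus X_2$, composition is handled by reassociating the factorization, and tensoring factors through $X\otimes V'$, which is negligible by Lemma~\ref{L:NegIdeal}. You actually supply more detail than the paper, which simply asserts that $X_1\oplus X_2$ is negligible; your verification via cyclicity is the right one (though your justification of ideal membership via a retract of $(X_1\oplus X_2)\otimes\unit$ is circular --- the cleaner argument is that each $X_i$ is a retract of $V^{\lambda^0_a}_\xi\otimes W_i$, so $X_1\oplus X_2$ is a retract of $V^{\lambda^0_a}_\xi\otimes(W_1\oplus W_2)\in\idealpb$).
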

 \begin{proof}
Property (2) is immediate from the definition of negligible.  Property 
   (1) is true because if two morphisms $f,f'$ factors through the
   negligible objects $X,X'$ respectively, then any linear
   combination of $f$ and $f'$ factors through the negligible object
   $X\oplus X'$.  
   Finally, Property (3) follows because if $f$ factors through the negligible
   object $X$ a then $f\otimes h$ factors through the
   negligible object $X\otimes V'$ (similarly for left tensoring).
 \end{proof}

  We now describe a purification process of $\catpb$ which will produce a category $\catN$ where all negligible morphisms are zero. 
  
\begin{definition}[Category $\catN$ as purification of $\catpb$]
Let $\catN$ be the category whose objects are the same as the ones of $\catpb$ and 
whose set of morphisms between two objects $V$ and $W$  is
$$\Hom_{{\catN}}(V,W)=\Hom_{\catp}(V,W)/\Negl(V,W).$$
\end{definition}

There is an obvious functor $\mathcal{F}: \catpb \to \catN$ which is the identity on objects and maps a morphism to its class modulo negligible morphisms:
\begin{enumerate}
\item $\mathcal{F}(A)=A, \text{ for all } A \in Ob(\catpb)$,
\item $\mathcal{F}(f)=[f] \in \Hom_{\catN}(A,B), \text{ for all } f \in \Hom_{\catpb}(A,B)$. 
\end{enumerate}
We will use the functor $\mathcal{F}$ to induce structures from $ \catpb$ to analogous structure on $\catN$.  
\begin{lemma}\label{L:catNGraded}
The category $\catN$ is a $\Gp=\C/\Z$-graded ribbon $\C$-linear category  whose structures are 
 induced from the analogous structures of $\catpb$.  
\end{lemma}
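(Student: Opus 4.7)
The plan is to transfer each piece of structure from $\catpb$ to $\catN$ along the functor $\mathcal{F}:\catpb\to\catN$, using crucially that the preceding lemma shows $\Negl$ is a two-sided composition ideal which is also closed under tensor product with arbitrary morphisms on either side. So the only non-formal ingredient is the tensor-ideal property of $\Negl$, which has just been established.

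First I would verify $\C$-linearity: each $\Negl(V,W)$ is a $\C$-subspace of $\Hom_{\catpb}(V,W)$, so the quotient $\Hom_{\catN}(V,W)$ carries an induced $\C$-vector space structure, and composition descends to a $\C$-bilinear operation. Next I would handle the monoidal structure by setting $V\otimes_{\catN}W:=V\otimes_{\catpb}W$ on objects and $[f]\otimes[g]:=[f\otimes g]$ on morphisms; part~(3) of the previous lemma guarantees well-definedness. The associators and unit constraints in $\catN$ are then defined as the images under $\mathcal{F}$ of those in $\catpb$, and the pentagon and triangle identities hold in $\catN$ because they already hold in $\catpb$ and $\mathcal{F}$ is the identity on objects.

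For the grading, I would let $\catN_{\bar z}$ be the full subcategory of $\catN$ whose objects are those of $\catpb_{\bar z}$. The key observation is that a morphism in $\catpb$ between objects of distinct grades $\bar z\neq \bar z'$ is already zero, since it must intertwine the scalar actions $\xi^{z\ell}\Id$ and $\xi^{z'\ell}\Id$ of $K_m^\ell$. Hence the graded decomposition of $\catpb$ passes unchanged to $\catN$, and the four axioms of Definition~\ref{D:Gstr} follow at once from their counterparts in $\catpb$ via $\mathcal{F}$.

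Finally, for the ribbon structure I would push the braiding $c_{V,W}$, twist $\theta_V$, evaluations $\ev_V,\tev_V$ and coevaluations $\coev_V,\tcoev_V$ of $\catpb$ (inherited by restriction from the ribbon category $\catd$, see Corollary~\ref{C:D-ribbon}) through $\mathcal{F}$; naturality, the hexagon and snake identities, and the twist--duality compatibility are all equalities of morphisms in $\catpb$, hence also in the quotient. The main obstacle that one might worry about is that of coherence for the quotiented structure maps, but this issue is entirely absorbed by the tensor-ideal property of $\Negl$ proved in the previous lemma; once that is in hand, the present lemma is essentially a bookkeeping exercise.
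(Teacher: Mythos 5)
Your proposal is correct and follows essentially the same route as the paper: both push the $\C$-linear, monoidal, graded, and ribbon structures from $\catpb$ to $\catN$ along the quotient functor $\mathcal{F}$, with the tensor-ideal property of $\Negl$ from the preceding lemma doing all the real work. Your extra remarks (well-definedness of $[f]\otimes[g]$ via part (3) of that lemma, and the vanishing of morphisms between distinct grades) only make explicit what the paper leaves as ``easy to see.''
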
 
\begin{proof}
First, we will show that $\catN$ is a ribbon $\C$-linear category. 
Using the functor $\mathcal{F}$, we can induce the tensor $\C$-linear structure of $\catpb$ onto $\catN$.   We also define the dual structure on $\catN$ as the one coming  from $\catpb$, via the functor $\mathcal{F}$.
Since the braiding and duality morphisms in $\catpb$ satisfy the
compatibility conditions for a pivotal
structure, then the corresponding braiding and duality morphisms under $\mathcal{F}$ will also satisfy these compatibility conditions in $\catN$.  

Recall the definition of a $\Gr$-graded category given in Subsection \ref{D:Gstr}.  
From Theorem \ref{T:1}, we know that $\catpb$ is $\C/\Z$-grading.  For any $g\in \C/\Z$, define $$\catN_g:=\mathcal{F}(\catpb_g).$$  It is easy to see this gives a $\C/\Z$-grading on $\catN$.    
\end{proof}

\subsection{Relative pre-modular structure on $\catN$}
Let $d=\frac{|m-n|}{\gcd(m,n)}$.  Then one can check that $d.w_m\in\Lambda_R$ so that $d.\ell.w_m\in \Lambda_\Zt^0$.  Next, we define a free
  realization for our category by considering elementary modules with
  weights which are zero except the component corresponding to the
  generator $H_m$.
\begin{definition}[Free realisation]\label{N:2}
 Consider the commutative
 subgroup of $\Zt$: $$\Ztt=(\Z/2\times\Z,+)<\Zt=(\Z/2\times\Lambda_\Zt^0,+)$$ where the $\Z$
  factor of $\Ztt$ is mapped to $\Z.d\ell w_m<\Lambda_\Zt^0$.
  The free realization $\{\sigma(z)\}_{z\in \Zt}$ of $\Zt$ in $\catd$ induces a free realization $\{\sigma(z)={\mathcal F}(\sigma(z))\}_{z\in \Ztt} $ of $\Ztt$ in $\catN$.
\end{definition}

As above let $ \Xp:=  \left( \frac12 \Z \right) / \Z   \subseteq \Gp$.  
\begin{proposition}[Generic Semi-simplicity]\label{P:GenSS}
The category $\catN$ is generically semi-simple with finite set of regular simple objects:
$$\Tp(g):=\Big\{ \  V^{\lambda_a^c}_{\xi}\  \mid  \  a\in \C, \, c\in \N^{{\rk}-1},\, \lambda_a^c \in C^{<},\,0\leq Re(a) < d\ell,\,  \bar{a}=g \in \C/\Z \ \Big\}$$
for $g \in \Gp \setminus \Xp$.
\end{proposition}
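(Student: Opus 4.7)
The plan is to verify the two requirements of generic semisimplicity from Definition \ref{D:5}: that each element of $\Tp(g)$ is a regular simple object of $\catN$, and that $\Tp(g) \otimes \sigma(\Ztt)$ forms a finite completely reduced dominating set for $\catN_g$. I will combine the structural description of $\catpb$ given by Theorem \ref{T:1}, the modified dimension formula of Theorem \ref{C:22}, and the observation that both negligible objects and simple modules of vanishing modified dimension become isomorphic to zero in the quotient $\catN$.

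Finiteness and simplicity are straightforward: the condition $\lambda_a^c \in C^<$ bounds the entries of $c \in \N^{\rk-1}$ via Equation \eqref{D:7}, and the conditions $\bar a = g$ together with $0 \le \operatorname{Re}(a) < d\ell$ pick out exactly $d\ell$ lifts of $g$. Since $g \notin \Xp = (\tfrac12\Z)/\Z$, any such $a$ lies outside $\tfrac12\Z$, so $\lambda_a^c$ is typical by Remark \ref{R:perturbativeWeights} and $V^{\lambda_a^c}_\xi$ is simple in $\catpb$. The alcove condition $\lambda_a^c \in C^<$ and Theorem \ref{C:22} yield $\qdp(V^{\lambda_a^c}_\xi) \neq 0$, so $\Id_{V^{\lambda_a^c}_\xi}$ is not negligible, $\End_\catN(V^{\lambda_a^c}_\xi) = \C$, and the object remains simple and regular in $\catN$. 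Complete reducibility of $\Tp(g) \otimes \sigma(\Ztt)$ follows because tensoring by $\sigma((n, k d\ell w_m))$ shifts the highest weight only in its $w_m$-component (by $k d\ell$, since $w_m(H_j) = \delta_{jm}$) and flips parity by $n$; two such tensor products are isomorphic simple modules only when the data $(a, c, n, k)$ coincide, thanks to the fundamental-domain choice $0 \le \operatorname{Re}(a) < d\ell$.

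For the dominating set property, take any $V \in \catN_g$; viewed as an object of $\catpb_g$, Theorem \ref{T:1} gives $V \cong T \oplus N$ with $T$ semisimple typical and $N$ negligible. The idempotent of $V$ obtained by projecting onto $N$ and re-including factors through the negligible object $N$, hence vanishes in $\catN$, so $V \cong T$ in $\catN$. Each simple summand of $T$ is of the form $V^\lambda_\xi$ or $V^\lambda_\xi \otimes \bar\C$ with $\lambda \in C^\le$; by Theorem \ref{C:22}, the summands with $\lambda \in C^\le \setminus C^<$ have vanishing $\qdp$ and are therefore negligible by Lemma \ref{L:simpleNeg}, so they vanish in $\catN$. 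Each remaining summand has the form $V^{\lambda_a^c}_\xi$ (possibly tensored with $\bar\C$) with $\lambda_a^c \in C^<$ and $\bar a = g$; writing $a = a' + k d\ell$ with $\operatorname{Re}(a') \in [0, d\ell)$ gives $V^{\lambda_a^c}_\xi \cong V^{\lambda_{a'}^c}_\xi \otimes \sigma((0, k d\ell w_m))$, and a parity flip is realised by a further tensor factor of $\bar\C = \sigma((1, 0))$. Thus $V$ is isomorphic in $\catN$ to a finite direct sum of objects of $\Tp(g) \otimes \sigma(\Ztt)$, establishing the dominating property.

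The main obstacle is this last step: one must combine Theorem \ref{T:1} with the simultaneous vanishing in $\catN$ of both the negligible summand $N$ and the simple summands of $T$ lying outside the open alcove $C^<$, then exploit the $\sigma(\Ztt)$-action to normalise each surviving summand into the prescribed fundamental domain $[0, d\ell)$ for $\operatorname{Re}(a)$. The other conditions are essentially immediate from the construction of $\catN$ and the modified dimension computation.
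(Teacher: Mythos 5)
Your proposal is correct and follows essentially the same route as the paper: apply Theorem \ref{T:1} to write any object of $\catpb_g$ as $T\oplus N$, observe that $N$ and the boundary-alcove summands of $T$ (those with vanishing $\qdp$) die in the quotient $\catN$, and then use $\sigma(\Ztt)$ to normalise the surviving typical summands into the fundamental domain $0\le \operatorname{Re}(a)<d\ell$. You simply spell out details the paper leaves implicit (e.g.\ invoking Lemma \ref{L:simpleNeg} for the $C^\le\setminus C^<$ summands and checking complete reducibility), so no further comparison is needed.
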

\begin{proof}
This property comes from the structural description of generic modules from $\catpb$, given in Theorem \ref{T:1}, and the fact that we are working in the quotient category $\catN$. 

Let $g\in \C / \Z$ be a generic grading, $g \notin \Xp$. We denote the set of typical modules from the alcove corresponding to this grading by:
$$S_g:= \Big\{ V^{\lambda_a^c}_{\xi} \otimes \bar{\C}^{\otimes\gamma} \  \mid  \  a\in \C, \ c\in \N^{{\rk}-1}, \lambda_a^c \in C^{<}, \bar{a}=g \in \C/\Z , \gamma \in \{ 1,0 \} \Big\} .$$ 
Using Theorem \ref{T:1} and the property that the negligible objects
are not seen (up to isomorphism) in $\catN$, we notice that $S_g$ is a
completely reduced dominating set for $\catN_g$.
In order to arrive at
a finite set, let us consider $\Tp(g)$ given in the statement of the theorem.
Using the characterization of the alcove given in Equation  \eqref{D:7}, we see
in $\Tp(g)$ we have a finite number of possibilities for the
$c$-component of the weight
and $d\ell$ possible values for $a$. This shows that $\Tp(g)$ is a finite
set. Moreover, we notice that:
$$S_g=\Tp(g) \otimes \sigma(\Ztt).$$
We conclude that $\Tp(g)$ satisfies the requirements concerning the generic semi-simplicity condition for $\catN$. 
\end{proof}

Let $a\in \C$ with $\wb a \notin \Xp$.   Then Proposition \ref{P:GenSS} implies that $\catN_{\wb a}$ is semi-simple and so each object of $\catN_{\wb a}$ is projective.   
Thus, the ideal generated by $V^{\lambda^0_a}_ξ$ in $\catN$   is the ideal of projective objects $\Proj_{\aleph}$ in $\catN$, i.e.\  $\Proj_{\aleph} =\ideal_{V^{\lambda^0_a}_ξ}$.

\begin{lemma}[M-trace]
The family $\mt^{\aleph}_V:\End_{\catN}(V)\to\kk$ indexed by  $V\in \Proj_{\aleph}$ given by 
$
\mt^{\aleph}_V([f])= \mtp_V(f)
$
is an m-trace where $\mtp$ is the m-trace on $\idealp$ in $\catp$
given in Theorem  \ref{C:22}.   
\end{lemma}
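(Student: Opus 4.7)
The plan is to prove the lemma in three stages: well-definedness, cyclicity, and the partial trace property.

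First, I would verify that the formula $\mt^\aleph_V([f]) = \mtp_V(f)$ is independent of the chosen representative $f$. By linearity, this reduces to checking that $\mtp_V(n) = 0$ for every negligible endomorphism $n \in \End_{\catpb}(V)$. Writing $n = g \circ h$ with $h \colon V \to X$ and $g \colon X \to V$ factoring through a negligible object $X$, the cyclicity property of $\mtp$ gives $\mtp_V(g \circ h) = \mtp_X(h \circ g)$, and the right-hand side vanishes because by Definition \ref{D:4} every endomorphism of a negligible object is sent to zero by $\mtp$ (and negligible objects lie in $\idealpb$ by definition). For this argument to make sense one also needs $V \in \idealpb$; this follows from the structural description in Theorem \ref{T:1}, which shows that any $V \in \catpb_{\bar a}$ with $\bar a \notin \Xp$ decomposes as $T \oplus N$ with $T$ semi-simple typical and $N$ negligible, so Fact \ref{F:TypRet} and the closure of the ideal under direct sums and retracts place $V$ in $\idealpb$. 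Since $\Proj_\aleph = \ideal_{V^{\lambda^0_a}_\xi}$ in $\catN$ and negligible summands vanish in $\catN$, one can represent any projective object of $\catN$ by such a $V$.

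With well-definedness in hand, the cyclicity of $\mt^\aleph$ is immediate: given $[f] \in \Hom_\catN(U,V)$ and $[g] \in \Hom_\catN(V,U)$ between objects of $\Proj_\aleph$, functoriality of $\mathcal F$ combined with cyclicity of $\mtp$ yields
\[
\mt^\aleph_V([f]\circ[g]) = \mtp_V(f\circ g) = \mtp_U(g\circ f) = \mt^\aleph_U([g]\circ[f]).
\]

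For the partial trace property, I would invoke Lemma \ref{L:catNGraded}, which states that the pivotal structure on $\catN$ is inherited from $\catpb$ through $\mathcal F$. Consequently the partial trace $\ptr_V$ in $\catN$ is simply the class modulo negligibles of the partial trace in $\catpb$, and for any $f \in \End_{\catpb}(U \otimes V)$ representing $[f]$ the partial trace property of $\mtp$ gives
\[
\mt^\aleph_{U \otimes V}([f]) = \mtp_{U \otimes V}(f) = \mtp_U(\ptr_V(f)) = \mt^\aleph_U(\ptr_V([f])).
\]
The principal subtlety is the well-definedness step, and specifically the observation that any $V$ representing an object of $\Proj_\aleph$ lies in $\idealpb$ so that $\mtp_V$ is defined in the first place; once this is established, the remaining verifications are formal consequences of pushing the structures of $\catpb$ through the quotient functor $\mathcal F$.
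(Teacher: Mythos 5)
Your proposal is correct and follows essentially the same route as the paper: well-definedness via cyclicity of $\mtp$ applied to a factorization through a negligible object, and the partial trace and cyclicity properties pushed through the quotient functor using the fact that the pivotal structure of $\catN$ is induced from $\catpb$ (Lemma \ref{L:catNGraded}). Your additional remark that one must first check $V\in\idealpb$ so that $\mtp_V$ is even defined is a point the paper leaves implicit, and is a reasonable refinement rather than a divergence.
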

\begin{proof}
  First, we need to show that $\mt^{\aleph}_V([f])$ does not depend on the
  choice of representative of $[f]$.  Suppose $f, f' \in \End_{\catp}(V)$ with $[f]=[f']$.  Then by definition there exist a negligible object $X$ and morphisms $g: X \rightarrow V$ and $h: V\to X$ such that $f=f'+gh$.  We have 
$$
\mt^{\aleph}_V([f])=\mtp_V(f'+gh)=\mtp_V(f')+\mtp_V(gh)=\mtp_V(f')+\mtp_X(hg)=\mtp_V(f')=\mt^{\aleph}_V([f']).
$$
Now to check that the partial trace property:  let $[f]\in \End_{\catp}(U\otimes V)$ then
$$
\mt^{\aleph}_{U\otimes V}([f])=\mtp_{U\otimes V}(f)=\mtp_U(\ptr_V(f))=\mt^{\aleph}_U([\ptr_V(f)])=\mt^{\aleph}_U(\ptr_V([f]))
$$
where the last equality comes from Lemma \ref{L:catNGraded}, as the pivotal structure of $\catN$ is induced from the pivotal structure of  $\catpb$.  The cyclicity property is proven in a similar way.  
\end{proof}

\begin{lemma}[Compatibility]
The bilinear map
$$\psi: \Gp \times \Ztt \rightarrow \kk^* \; \text{ given by 
}\; \psi(\wb a, k)=ξ^{\ell \wb{a}k\frac{2dmn}{n-m}}$$ satisfies the
compatibility property of Definition \ref{D:6} for the category
$\catN$.
\end{lemma}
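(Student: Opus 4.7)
The plan is to reduce this compatibility computation to the one already carried out in Lemma~\ref{P:compat} for $\catd$, using the fact that the braiding and tensor structure on $\catN$ are inherited from $\catpb\subset\catd$ via the quotient functor $\mathcal{F}$. Under the embedding $\Ztt\hookrightarrow\Zt$ of Definition~\ref{N:2}, an element $k$ of the $\Z$-factor of $\Ztt$ corresponds to $(\bar 0, kd\ell w_m)\in\Z/2\times\Lambda_\Zt^0$, so $\sigma(k)$ in $\catN$ lifts to $\sigma(\bar k,kd\ell w_m)$ in $\catd$. Since the root vectors $E_i,F_i$ act by zero on this module, only the Cartan part $\HR^\xi$ of the $R$-matrix contributes, and Equation~\eqref{E:ActionHR} yields, for any weight vector $v\in V$ with $V\in\catN_{\wb a}$ and generator $s$ of $\sigma(k)$,
\begin{equation*}
c_{\sigma(k),V}\circ c_{V,\sigma(k)}(v\otimes s)=\xi^{2kd\ell\brk{w(v),w_m}}\,v\otimes s,
\end{equation*}
just as in the proof of Lemma~\ref{P:compat} (the super-flip signs cancel because they appear squared).

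Next I would check that the scalar depends only on the class $\wb a\in\Gp$. Any weight of $V\in\catpb_{\wb a}$ has the form $w(v)=aw_m+\mu$ where $a$ is a fixed lift of $\wb a$ and $\mu\in\Lambda_W$. The defining property $dw_m\in\Lambda_R$ (which is precisely why the $\Z$-factor of $\Ztt$ was mapped into $\Z\cdot d\ell w_m$) combined with $\brk{\Lambda_W,\Lambda_R}\subset\Z$ gives $d\brk{\mu,w_m}\in\Z$, hence $\xi^{2kd\ell\brk{\mu,w_m}}=1$. The scalar therefore collapses to $\xi^{2akd\ell\brk{w_m,w_m}}$. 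A direct computation using $w_m=-2\rho_{\wb 1}/(m-n)\in\str^\perp$ and the form on $\h^*$ from Subsection~\ref{SS:SuperLieAlg} yields $\brk{w_m,w_m}=mn/(n-m)$, so the scalar becomes $\xi^{\ell ak\cdot 2dmn/(n-m)}$, which is the desired value of $\psi(\wb a,k)$.

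Finally, I would verify that $\psi$ is well-defined on $\Gp$: replacing $a$ by $a+1$ multiplies the exponent by $\ell k\cdot 2dmn/(n-m)$, and since $d=|m-n|/\gcd(m,n)$ the quantity $dmn/(n-m)=\pm\operatorname{lcm}(m,n)$ is an integer, so the change lies in $\ell\Z$ and $\xi^{(\cdot)}$ is unchanged. Bilinearity of $\psi$ in both variables is immediate from the formula. The main (and only) obstacle is this short integrality check, whose success reflects the specific choice of generator $d\ell w_m$ for the $\Z$-factor of the free realization $\Ztt$.
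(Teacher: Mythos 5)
Your proposal is correct and follows essentially the same route as the paper: reduce to the Cartan part $\HR^{ξ}$ of the double braiding as in Lemma~\ref{P:compat}, write the weight as $aw_m+\mu$ with $\mu\in\Lambda_W$, kill the $\mu$-contribution using $dw_m\in\Lambda_R$ and $\brk{\Lambda_W,\Lambda_R}\subset\Z$, and evaluate $\brk{w_m,w_m}=\frac{mn}{n-m}$ together with the integrality of $\frac{2dmn}{n-m}$. The only cosmetic slip is writing the lift of $\sigma(k)$ as $\sigma(\bar k,kd\ell w_m)$ after correctly identifying it as $(\bar 0,kd\ell w_m)$; your remark that the super-flip signs cancel in the double braiding is the same observation as the paper's appeal to the transparency of $\wb\C$.
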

\begin{proof}
  Since $\wb \C$ is transparent, it is enough to consider
  $\sigma=\sigma(0,k)\in\ve(\Ztt)$ for some $k\in\Z$.  Let $a\in\C$,
  $\wb a\in\C/\Z$ be its class mod $\Z$ and $\lambda\in\h^*$ be the
  weight of a weight vector $v$ of an object $V\in\catN_{\wb a}$.  In
  particular, since $V$ is perturbative $\lambda-aw_m\in\Lambda_W$.  As in Lemma \ref{P:compat} the double braiding will acts on $v\otimes \sigma$ by the scalar $ξ^{2{\brk{\lambda,kd\ell w_m}}}$.  Since $\brk{kd\ell w_m,\Lambda_W}\in\ell\Z$ and
  $\brk{w_m,w_m}=\frac{\brk{2\rho_1,2\rho_1}}{(m-n)^2}=\frac{mn}{n-m}$, we have
  $$ξ^{2{\brk{\lambda,kd\ell w_m}}}=ξ^{2{\brk{aw_m,kd\ell w_m}}}=ξ^{\ell ak\frac{2dmn}{n-m}},$$
  which define a scalar depending only of $(\wb a,k)$ because
  $\frac{2dmn}{n-m}\in\Z$.
\end{proof}

Combining Lemma \ref{L:catNGraded} with the results of this
subsection, we have the following theorem:
\begin{theorem}\label{THEOREM2b}
  The category $\catN$ is a pre-modular $\Gp$-category relative to $(\Ztt,\Xp)$.
\end{theorem}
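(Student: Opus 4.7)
The plan is to verify the three pieces of data and two properties of Definition \ref{D:1} (pre-modular category) for $\catN$, with parameters $\Gr = \Gp = \C/\Z$, $\Zt = \Ztt = \Z/2 \times \Z$, and $\XX = \Xp = (\frac{1}{2}\Z)/\Z$. Most of the work has already been done in the preceding lemmas and propositions, so the proof is essentially an assembly of these results.

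First I would verify that $\Xp$ is a small symmetric subset of $\Gp$. Symmetry is immediate since $-\frac{1}{2} \equiv \frac{1}{2} \pmod{\Z}$, whence $\Xp = -\Xp$. For smallness, note that $\Xp$ has exactly two elements, so any finite union $\bigcup_{i=1}^n (g_i + \Xp)$ is a finite subset of $\Gp$, whereas $\Gp = \C/\Z$ is uncountable. Hence no finite union of translates of $\Xp$ can exhaust $\Gp$.

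Next I would collect the three pieces of data. Lemma \ref{L:catNGraded} furnishes the $\Gp$-grading on the ribbon $\C$-linear category $\catN$. Definition \ref{N:2} provides the free realization $\{\sigma(z)\}_{z \in \Ztt}$ of $\Ztt$ in $\catN$; each such $\sigma(z)$ lies in $\catN_0$ because its weight lies in $\Lambda_\Zt^0 \subset \Lambda_R$, so the element $K_m^\ell$ acts trivially. The non-zero m-trace on the ideal of projective objects in $\catN$ is provided by the m-trace lemma of this subsection, where I note that $\Proj_\aleph = \ideal_{V^{\lambda^0_a}_\xi}$ by the remark preceding it; non-vanishing follows from the explicit formula \eqref{eq:qdp} together with Remark \ref{R:positiveRootEv}, which show $\qdp$ is non-zero on simple typical modules with highest weight in $C^<$, and such modules exist as projective objects of $\catN_g$ for every $g \in \Gp \setminus \Xp$ by Proposition \ref{P:GenSS}.

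Finally I would invoke the two properties. Generic semisimplicity is exactly the content of Proposition \ref{P:GenSS}, which for each $g \in \Gp \setminus \Xp$ exhibits the finite set $\Tp(g)$ of regular simple objects such that $\Tp(g) \otimes \sigma(\Ztt)$ is a completely reduced dominating set for $\catN_g$. The compatibility property is established in the compatibility lemma, which produces the bilinear form $\psi(\bar a, k) = \xi^{\ell \bar{a}k \frac{2dmn}{n-m}}$ with the required double-braiding identity. I do not expect any genuine obstacle: the assembly is routine once the quotient category $\catN$ has been shown to be ribbon. The only mildly delicate point is to be sure that the induced m-trace is both well-defined on the quotient and still non-zero, but the former is the content of the m-trace lemma and the latter reduces to the non-vanishing of $\qdp$ on $C^<$, which is already on record.
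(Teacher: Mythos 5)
Your proposal is correct and follows essentially the same route as the paper, which proves the theorem by assembling Lemma \ref{L:catNGraded} (the $\Gp$-graded ribbon structure), the free realization of Definition \ref{N:2}, the induced m-trace, Proposition \ref{P:GenSS} (generic semisimplicity), and the compatibility lemma. Your added verifications (symmetry and smallness of $\Xp$, membership of the $\sigma(z)$ in the degree-zero part, and non-vanishing of the induced m-trace via the formula for $\qdp$ on $C^<$) are all consistent with the paper and fill in details the paper leaves implicit.
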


We have the following two conjectures.
\begin{conjecture}\label{conj:non-degenerateb}
  The category $\catN$ is non-degenerate.
\end{conjecture}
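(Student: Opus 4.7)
The plan is to establish non-degeneracy by an explicit computation of $\Delta_+\Delta_-$, extending the argument that works for $\sll(2|1)$ in Theorem \ref{T:non-degenerateSL21}. By Lemma 5.10 of \cite{CGP14}, these scalars do not depend on the choice of $g \in \Gp \setminus \Xp$ nor on the simple object $V \in \catN_g$ used to define them, so I am free to take $V = V^{\lambda^0_a}_\xi$ for a convenient $a \in \C$ with $\bar a \notin \Xp$. Proposition \ref{P:GenSS} expresses the Kirby color of index $0$ as a finite formal sum $\Omega_0 = \sum_{\mu \in \Tp(0)} \qdp(V^\mu)\, V^\mu$, where $\mu$ runs over typical weights in $C^<$ with $\bar\mu = 0$, modulo the $\Ztt$-action.

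First I would compute, for each summand, its contribution to $\Delta_\pm$. Encircling the $V$-strand by $V^\mu$ with $\pm 1$ framing and taking the partial trace produces
$$\Delta_\pm \;=\; \sum_{\mu \in \Tp(0)} \qdp(V^\mu)\, \xi^{\pm \brk{\mu+\pi,\mu}}\, S'(V^{\lambda^0_a}_\xi, V^\mu),$$
where the twist factor $\xi^{\pm\brk{\mu+\pi,\mu}}$ comes from Proposition \ref{P:twist-simple}, $S'$ is given explicitly by Lemma \ref{L:Sprime}, and $\qdp$ is given by Equation \eqref{eq:qdp}. Each summand is therefore a product of $q$-numbers indexed by positive roots, times a quadratic Gauss-type exponential in the Cartan coordinates of $\mu$.

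The key step is to evaluate this sum in closed form. I would exploit the factorisation of typical characters as a product of $\sll(m)$- and $\sll(n)$-Schur polynomials times a central variable (the same factorisation used in the proof of Theorem \ref{T:1}). Combined with the explicit formulas for $\qdp$ and $S'$, this should split $\Delta_\pm$ into three pieces: an even-root Gauss sum over the alcove of $U_\xi(\sll(m) \oplus \sll(n))$, a finite geometric sum over the $\Ztt$-shifts of $a$, and a factor built from the odd positive roots $\roots^+_{\p 1}$. The first is the standard Gauss sum for the nilpotent category at an odd root of unity and is non-zero by arguments parallel to those of Section \ref{S:ProjMod} (cf.\ also \cite{DGP2,GHP}); the second is non-zero because $\ell$ is odd and coprime to the relevant indices.

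The main obstacle is the odd-root factor. It contributes a product of terms $\qn{\brk{\mu+\rho,\alpha}}_\xi$ for $\alpha \in \roots^+_{\p 1}$ which couple to the Gauss-sum weights in a genuinely super-algebraic way, with no analogue in the non-super setting. The strategy would be to mimic the $\sll(2|1)$ computation underlying Theorem \ref{T:non-degenerateSL21}: organise the odd contributions as a Vandermonde-like determinant in $\xi^{a}$ and the $\xi^{c_k}$, and then deduce non-vanishing from Equation \eqref{eq:typ'} (which already forbids any individual factor from being zero) together with a counting argument controlling possible coincidences among the exponents. Running this argument uniformly in $m$ and $n$ is the real technical hurdle, and is what makes the conjecture substantive beyond $\sll(2|1)$.
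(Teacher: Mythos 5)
This statement is a \emph{conjecture} in the paper: the authors prove it only for $\g=\sll(2|1)$ (Theorem \ref{T:non-degenerateSL21b}) and explicitly leave the general case open. Your proposal is a strategy outline rather than a proof, and you say so yourself: the decisive step --- showing that the odd-root contributions to the Gauss-type sum do not force $\Delta_+\Delta_-$ to vanish uniformly in $m$ and $n$ --- is exactly the step you defer (``the real technical hurdle''). That step is the entire content of the conjecture, so the proposal does not establish the statement. The overall shape of your plan does match the paper's $\sll(2|1)$ argument (use the sliding property to get independence of $V$ and $g$, expand $\Delta_\pm$ over the Kirby color using the explicit twist, $S'$, and $\qdp$ formulas, and reduce to sums of roots of unity), but you omit the paper's key simplification: since $\Delta_\pm$ is independent of $a$, one may let $\xi^a\to\infty$, which collapses the $a$-dependent $q$-numbers and is what makes the root-of-unity sums tractable even in the $\sll(2|1)$ case.

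There are also concrete errors in the setup. You take the Kirby color of index $0$, but $0\in\Xp=(\tfrac12\Z)/\Z$, so $\Omega_0$ is not defined; the circle must be colored by $\Omega_g$ for $g\notin\Xp$ (in the paper's computation both the strand and the circle carry the index $\bar a\notin\Xp$). Your formula for $\Delta_\pm$ is also missing factors present in the correct expansion: comparing with the paper's computation, each summand should carry $\theta_{V^\mu}^{\pm1}\,\theta_{V}^{\pm1}$, the Hopf-link pairing against $V^*$ (not $V$), and an overall normalization by $\qdp(V)^{-1}$; moreover the convention in Equation \eqref{eq:Sprime} closes off the \emph{first} argument of $S'$, so the roles of the strand and circle colors in your $S'(V^{\lambda^0_a}_\xi,V^\mu)$ are reversed. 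Finally, the claimed splitting of the sum into three independent pieces (even-root Gauss sum, geometric sum, odd-root factor) is not a genuine factorization: the odd-root $q$-numbers $\qn{\brk{\mu+\rho,\alpha}}_\xi$ depend on all the summation variables simultaneously, which is precisely why the argument does not reduce to the non-super case.
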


\begin{conjecture}\label{conj:modularb}
  The category $\catN$ is a modular $\Gp$-category.
\end{conjecture}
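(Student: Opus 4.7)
The strategy is to promote the pre-modular structure on $\catN$ established in Theorem \ref{THEOREM2b} to a modular one by exhibiting the modularity parameter $\zeta_\Omega \in \C^*$ of Definition \ref{D:2}. I would follow the three-step template used for $\catn$ in Section \ref{S:ProjMod}: (i) a factorization lemma for transparent endomorphisms, (ii) a graphical argument producing the candidate $\zeta_\Omega$, and (iii) an explicit m-trace computation showing $\zeta_\Omega \neq 0$.

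The first step is where most of the work lies. The analog of Lemma \ref{L:4} should say that any transparent $f \in \End_{\catN}(V)$ factors as $\sum_i \bar\eta_i \bar\pi_i$ with $\bar\pi_i \colon V \to \sigma(z_i)$ and $\bar\eta_i \colon \sigma(z_i) \to V$ for some $z_i \in \Ztt$. In $\catn$ the proof used the single ``probe'' module $V^{(\ell-1)\rho_0}$ to force $2\langle w(f(v)),\alpha_i\rangle \in \ell\Z$. Since $\catN$ contains no analog of this Steinberg module (cf. the remark following Theorem \ref{T:SScatn}), I would instead test transparency against the whole family of perturbative typicals $V^{\lambda^0_a}_\xi$ with $\bar a \in \Gp \setminus \Xp$, combined with the standard module $\mathsf{v}$; the multiplicity-free decompositions of Lemma \ref{L:21} and Fact \ref{F:Vlambdaa} supply the ``basis'' of weight vectors needed to replicate the argument. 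Transparency together with perturbativity constrains $w(f(v))$ to lie in $\ell\Lambda_W \cap \Lambda_R$, and a further reduction using the $\Gp=\C/\Z$ grading is needed to narrow this down to $\Z d\ell w_m$ as required for $\sigma(\Ztt)$.

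With the factorization lemma in place, the same graphical manipulations as in Section \ref{S:ProjMod} show that the left-hand side of Equation \eqref{eq:mod} is a scalar multiple of $\delta_{i,j}\Id$, defining $\zeta_\Omega \in \C$. To verify $\zeta_\Omega \neq 0$, I would apply $\mtp$ to both sides of \eqref{eq:mod} with $V_i = V_j = V^{\lambda^0_a}_\xi$ for generic $a$ and $\Omega_h$ the Kirby color of a generic $h \in \Gp \setminus \Xp$, yielding
\[
\zeta_\Omega = \sum_{W \in \Tp(h)} \qdp(W)\, \qdp(V^{\lambda^0_a}_\xi)\, S'(W, V^{\lambda^0_a}_\xi)\, S'((V^{\lambda^0_a}_\xi)^*, W).
\]
Using the formulas of Lemma \ref{L:Sprime} and Equation \eqref{eq:qdp}, each summand simplifies to an explicit product of $q$-numbers, and the resulting sum must be shown to be non-zero, in analogy with the collapse $\sum_\mu 1$ observed for $\catn$.

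The principal obstacle is precisely this factorization lemma: replacing a single canonical probe by a family of typical modules, and then extracting from their joint transparency data the rather restrictive weight condition $w(f(v)) \in \Z d\ell w_m$. A secondary difficulty is that the sum computing $\zeta_\Omega$ is considerably less clean in $\catN$ than in $\catn$, since $\qdp$ has only even-root factors in its numerator (Equation \eqref{eq:qdp}); one may first verify the non-vanishing for the special case $\g = \sll(2|1)$ (where Theorem \ref{T:non-degenerateSL21} is already known) to exhibit the mechanism of cancellation before treating the general case.
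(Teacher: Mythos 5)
The statement you are trying to prove is left \emph{open} in the paper: it appears there as Conjecture \ref{conj:modularb}, with no proof offered (the authors only establish the pre-modular structure in Theorem \ref{THEOREM2b} and, separately, non-degeneracy for $\sll(2|1)$ in Theorem \ref{T:non-degenerateSL21b}). Your proposal is a sensible research program modelled on the projective-module case of Section \ref{S:ProjMod}, and it correctly identifies the two points where that template breaks down; but it does not close either of them, so it is a strategy sketch rather than a proof.

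Concretely, the two gaps are the ones you yourself flag. First, the factorization lemma: in $\catn$ the proof of Lemma \ref{L:4} hinges on probing a transparent $f$ against the single $D$-dimensional simple projective $V^{(\ell-1)\rho_0}$, whose PBW-type basis forces $E_\alpha^{b_\alpha}f(v)=F_\alpha^{b_\alpha}f(v)=0$ and hence $2\brk{w(f(v)),\alpha_i}\in\ell\Z$. In $\catN$ that module is not available as a probe (projectives of $\catd$ lie in the kernel of $\mtp$, and in the quotient the dominating objects are the alcove typicals, which are much smaller than $D$-dimensional), and you give no replacement argument that actually derives the constraint $w(f(v))\in\Z\, d\ell w_m$ needed to factor through $\sigma(\Ztt)$ rather than through the larger $\Lambda_\Zt^0$. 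Testing against the family $V^{\lambda^0_a}_\xi$ together with $\mathsf{v}$ is a plausible idea, but the multiplicity-free decompositions of Lemma \ref{L:21} and Fact \ref{F:Vlambdaa} do not by themselves reproduce the vanishing conditions on root vectors; this step is genuinely missing. Second, the non-vanishing of $\zeta_\Omega$: in $\catn$ the sum collapses to $\sum_\mu 1$ precisely because $S'(V^\mu,V^\lambda)=\xi^{2\brk{\lambda+\pi/2,\mu+\pi/2}}\qd(V^\lambda)^{-1}$, whereas in $\catN$ the analogue of $S'$ carries the full ratio of $q$-numbers from Lemma \ref{L:Sprime} and Equation \eqref{eq:qdp}, and no such cancellation is exhibited — indeed even the weaker non-degeneracy statement $\Delta_+\Delta_-\neq 0$ required a delicate root-of-unity summation that the paper could only carry out for $\sll(2|1)$. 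Until both steps are supplied, the statement remains a conjecture.
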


As mentioned above, Conjecture \ref{conj:modularb} implies Conjecture
\ref{conj:non-degenerateb}.

\begin{theorem}\label{T:non-degenerateSL21b}
  Conjecture \ref{conj:non-degenerateb} is true for $\g=\sll(2|1)$. 
\end{theorem}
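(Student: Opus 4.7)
My plan is to establish non-degeneracy by direct computation of the scalars $\Delta_+$ and $\Delta_-$ of Definition~\ref{d:ndeg} for $\sll(2|1)$. Because these scalars are independent of the chosen grading $g\in(\C/\Z)\setminus\Xp$ and of the simple test object $V\in\catN_g$ (Lemma~5.10 of \cite{CGP14}), I would pick $g$ generic and $V=V_\xi^{\lambda_a^0}$ for a lift $a\in\C$ of $g$. For $\sll(2|1)$ the rank is $r=2$, there is a single even positive simple root $\alpha_1$ and a single odd positive simple root $\alpha_2$, and $d=|m-n|/\gcd(m,n)=1$, so Proposition~\ref{P:GenSS} identifies $\Tp(g)$ as $\{V_\xi^{\lambda_b^{d_1}}\mid d_1\in\{0,\ldots,\ell-2\},\ b\in a+\{0,\ldots,\ell-1\}\}$, a finite set of cardinality $\ell(\ell-1)$.

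Expanding the defining skein relation gives $\Delta_\pm\,\Id_V=\sum_{(d_1,b)}\qdp(V_\xi^{\lambda_b^{d_1}})\,\theta_{V_\xi^{\lambda_b^{d_1}}}^{\pm1}\,S'(V_\xi^{\lambda_b^{d_1}},V)\,\Id_V$, and each ingredient has a closed form. Equation~\eqref{eq:qdp} gives $\qdp(V_\xi^{\lambda_b^{d_1}})=\qn{d_1+1}_\xi/(\qn{1}_\xi\,\qn{b}_\xi\,\qn{b+d_1+1}_\xi)$. Proposition~\ref{P:twist-simple} together with Remark~\ref{R:piRho} (the pivot $K_\pi$ acts as $K_{2\rho}$ on perturbative modules) yields the twist $\theta_{V_\xi^{\lambda_b^{d_1}}}=\xi^{-2b(b+d_1+1)}\Id$. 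Finally, Lemma~\ref{L:Sprime} writes $S'(V_\xi^{\lambda_b^{d_1}},V)=\varphi_{2\lambda_a^0+2\rho}\bigl(\sch(V_\xi^{\lambda_b^{d_1}})\bigr)$; substituting the super-character of the typical module (an $\sll(2)$ Weyl character in the $\alpha_1$-direction times the two odd-root factors $(1-[-\alpha_2])(1-[-\alpha_1-\alpha_2])$) presents $S'$ as an $\sll(2)$ geometric sum multiplied by $(1-\xi^{-2a})(1-\xi^{-2(a+1)})$ (depending only on the test weight) and a quadratic phase in $(a,b,d_1)$.

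The remaining step is to substitute everything, reindex $b=a+j$ with $j\in\Z/\ell\Z$, and evaluate. The $j$-sum, after completing the square, takes the form of a classical quadratic Gauss sum $\sum_j\xi^{\mp2j^2+(\text{linear in }j)}$, of absolute value $\sqrt\ell$ and hence non-zero for odd $\ell$; the $d_1$-sum is a finite sum of quantum $\sll(2)$ integers whose non-vanishing can be checked by direct evaluation. Their product is non-zero, so $\Delta_\pm\neq 0$. The main obstacle will be the careful bookkeeping of phases and the rational-function manipulations needed to extract a clean Gauss-sum structure from the $\qn{a+j}_\xi\,\qn{a+j+d_1+1}_\xi$ denominators, using the fact that $\Delta_\pm$ must be constant in $a$ to force the requisite cancellations in the apparent $a$-dependence. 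The hypothesis $\bar a\notin\Xp$ keeps these denominators invertible, and odd parity of $\ell$ ensures that $2$ is invertible modulo $\ell$ so the quadratic form in the Gauss sum is non-degenerate; both are essential to the argument.
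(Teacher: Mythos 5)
Your setup coincides with the paper's: both arguments use the independence of $\Delta_\pm$ from the test object and the grading, take $V=V_\xi^{\lambda_a^0}$, and substitute the explicit $\sll(2|1)$ formulas for the modified dimension, the twist and the open Hopf link from \cite{GP1}, arriving at a double sum over $j\in\{0,\dots,\ell-1\}$ and $d_1\in\{0,\dots,\ell-2\}$. The gap is in your proposed evaluation of that sum. The total phase is (up to terms depending only on $a$) $-2j^2+1+d_1(1-2j)$, which contains the cross term $-2jd_1$, and the denominators $\qn{a+j}_\xi\qn{a+j+d_1+1}_\xi$ also couple $j$ and $d_1$ (and $a$). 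Consequently the double sum does \emph{not} factor as (quadratic Gauss sum in $j$) $\times$ ($d_1$-sum of quantum integers), and the conclusion ``their product is non-zero'' has nothing to apply to. Completing the square in $j$ does not decouple the denominators, so no Gauss sum of modulus $\sqrt\ell$ ever appears; indeed the paper's final answer is $\Delta_+=\frac{2\ell(\xi^{(\ell+1)/2}-\xi)}{\qn1_\xi^2}$, with a factor $\ell$ rather than $\sqrt\ell$.

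The missing idea is how to exploit the $a$-independence concretely. You gesture at it (``force the requisite cancellations''), but the actual mechanism in the paper is to let $\xi^a\to\infty$, so that $\qn{a+c}_\xi\sim\xi^{a+c}$ and every denominator becomes a pure exponential. After this the inner sum over $d_1$ (extended to $m\in\{0,\dots,\ell-1\}$, harmless since the boundary weight has vanishing modified dimension) becomes a sum of roots of unity $\sum_m\xi^{cm}$ with $c$ depending linearly on $j$; it vanishes unless $c\equiv0\pmod\ell$, which for each of the three terms of the expanded $\qn{m+1}_\xi^2$ selects exactly one value of $j$ (namely $j=\tfrac{\ell+1}{2},\,0,\,\tfrac{\ell-1}{2}$), and each surviving term contributes $\ell$ times an explicit phase. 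Summing the three contributions gives the closed non-zero value above, and the analogous computation gives $\Delta_-=\overline{\Delta_+}\neq0$. So you should replace the Gauss-sum/factorization step by this limit-and-filtering argument; without it the non-vanishing is not established.
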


\begin{proof} Here we will use the notation and calculations given in
  \cite{GP1}.  Let $g \in \Gp\setminus \Xp$.  For $\sll(2|1)$ the
  weights only depend on two variables:
  $\lambda_{a}^{m} \in \N\times \C$.  The proof of Lemma 5.9 in
  \cite{CGP14} shows that an edge colored with a typical module can
  slide over a circle component colored with the Kirby color (this
  only requires a category to be relative pre-modular).  Consider the
  diagrammatic equation defining $Δ_+$ or $Δ_-$.  Add a strand colored
  with $W\in \catN_{h}$ (with $h\in\Gp\setminus \Xp$) on the left of
  each side of this equation.  Then using the sliding property, one
  can slide the strand colored with $W$ into the circle component then
  slide the strand colored with $V$ out of the circle component.  It
  follows that $Δ_\pm$ do not depend on the choice of the module
  $V\in \catN_{g}$ nor the choice of group element
  $g\in \Gp\setminus \Xp$.

To compute $Δ_\pm$ we follow an analogous computation done in Section 2.2 of \cite{CGP14}.  
Let $a\in\C$ with $\bar a=g$ and let $Δ_+$ and $Δ_-$ be the scalar defined by Definition
\ref{d:ndeg} with $V=V(\lambda_{a}^{0})$ and
$\Omega_g=\sum_{k,m=0}^{\ell-1}\qdp(V(\lambda_{a+k}^m))V(\lambda_{a+k}^m)$.  As explained above the $Δ_+$ does
not depends on $V\in \catN_{\wb a}$ nor on $a\in\C$ nor on $\bar a=g\in \Gp\setminus \Xp$.   We have
\begin{align*}
Δ_+&=\sum_{k,m}\theta_{V(\lambda_{a+k}^m)}\theta_{V(\lambda_{a}^0)}\operatorname{Hopf}(V(\lambda_{a+k}^m),V(\lambda_{a}^0)^*)\frac{\qdp(V(\lambda_{a+k}^m))}{\qdp(V(\lambda_{a}^0))}
\end{align*}
where $\operatorname{Hopf}(V(\lambda_{\alpha}^c),V(\lambda_{\alpha'}^{c'}))$ is the value of the renormalized Reshetikhin-Turaev invariant $F'$ of the positive
Hopf link colored by $V(\lambda_{\alpha}^c)$ and $V(\lambda_{\alpha'}^{c'})$.  This value is given in  \cite{GP1}:
$$\operatorname{Hopf}(V(\lambda_{\alpha}^c),V(\lambda_{\alpha'}^{c'}))=q^{-(2\alpha+c+1)(2\alpha'+c'+1)}\frac{\qn{(c+1)(c'+1)}_q}{\qn1_q}.$$
The values for modified dimension and twist are also computed in  \cite{GP1}:
$$\qdp(V(\lambda_{\alpha}^c))=\frac{\qn{c+1}_q}{\qn{1}_q\qn{\alpha}_q\qn{\alpha+c+1}_q},\qquad\theta_{V(\lambda_{\alpha}^c)}=q^{-2\alpha(\alpha+c+1)}.$$
Using these values and $V(\lambda_{a}^0)^*=V(\lambda_{-a-1}^0)$, we obtain
\begin{align*}
  Δ_+&=\sum_{k,m}ξ^{\alpha_{km}}
            \frac{\qn{(m+1)(1)}_ξ}{\qn1_ξ}
            \frac{\qn{m+1}_ξ\qn a_ξ\qn{a+1}_ξ} {\qn1_ξ\qn {a+k}_ξ\qn{a+k+m+1}_ξ}
\end{align*}
where
\begin{align*}
  \alpha_{km}&=\bp{-2(a+k)(a+k+m+1)}+\bp{-2a(a+1)}+\bp{-(2a+2k+m+1)(-2a-1)}\\
             &=-2{k}^{2}+1+m(1-2k).
\end{align*}
So we have
\begin{align}
  Δ_+&=\sum_{k}ξ^{-2{k}^{2}+1}\frac{\qn a_ξ\qn{a+1}_ξ}{\qn1_ξ^2\qn {a+k}_ξ}\sum_{m}ξ^{m(1-2k)}\frac{\qn{m+1}_ξ^2}{\qn{a+k+m+1}_ξ}.
\end{align}
Since $Δ_+$ does not depends on $a$, it is equal to the limit
$ξ^a\to\infty$.  Then $\qn{a+c}\sim ξ^{a+c}$ and we get
\begin{align*}
  Δ_+&=\sum_{k}ξ^{-2{k}^{2}+1}\frac{ξ^{2a+1}}{\qn1^2_ξξ^{a+k}}
            \sum_{m}ξ^{m(1-2k)}\frac{\qn{m+1}_ξ^2}{ξ^{a+k+m+1}}\\
          &=\sum_{k}ξ^{-2{k}^{2}-2k+1}\frac{1}{\qn1_ξ^2}
            \sum_{m}\left(ξ\cdot ξ^{(1-2k)m}-2ξ^{-2km}+ξ^{-1}ξ^{(-1-2k)m}\right).
\end{align*}
Since the last sum is a sum of roots of unity of the form
$\sum_mξ^{cm}$, it vanishes unless $\bar c=0\in\Z/\ell\Z$ in
which case its value is $\ell$.  Thus for each of the three terms
there is only one value of $k$ for which the sum over $m$ does not
vanish which happen only for respectively $k=\frac{\ell+1}2$, $k=0$ and $k=\frac{\ell-1}2$ respectively. So we have:
\begin{align*}
  Δ_+&=\frac{2\ell(ξ^{\frac{\ell+1}2}-ξ)}{\qn1_ξ^2}\neq0.
\end{align*}
Similarly,
\begin{align*}
  Δ_-&=\sum_{k}ξ^{2{k}^{2}-1}\frac{\qn a_ξ\qn{a+1}_ξ}{\qn1_ξ^2\qn {a+k}_ξ}\sum_{m}ξ^{m(2k-1)}\frac{\qn{m+1}_ξ^2}{\qn{a+k+m+1}_ξ}\\
  &=\sum_{k}ξ^{2{k}^{2}-2k-1}\sum_{m}ξ.ξ^{m(2k-1)}-2ξ^{m(2k-2)}+ξ^{-1}ξ^{m(2k-3)}\\
  &=\frac{2\ell(ξ^{\frac{\ell-1}2}-ξ^{-1})}{\qn1_ξ^2}=\overline{Δ_+}\neq0.
\end{align*}
\end{proof}

Theorem \ref{T:non-degenerateSL21} implies that the 3-manifold invariants of \cite{CGP14} can be associated to $\catN$ in the case of $\g=\sll(2|1)$.

\end{document}